\newtheorem{theorem}{Theorem}[section]
\newtheorem{lemma}[theorem]{Lemma}
\newtheorem{proposition}{Proposition}
\theoremstyle{definition}
\newtheorem{definition}[theorem]{Definition}
\newtheorem{remark}{Remark}
\title[Filled Julia set of some class of H\'enon-like maps] 
      {Filled Julia set of some \\ class of H\'enon-like maps}
\author[Danilo Antonio Caprio]{}
\subjclass[2010]{37F45; 37F50}
 \keywords{holomorphic dynamics, stable manifolds, filled Julia sets.}
\email{danilo.caprio@unesp.br}
\thanks{Supported by \textit{FAPESP} grants $2015/26161-6$ and $2018/13720-5$}
\begin{document}
\maketitle

\centerline{\scshape Danilo Antonio Caprio}
\medskip
{\footnotesize
\centerline{UNESP - Departamento de Matem\'atica do}
\centerline{Instituto de Bioci\^encias, Letras e Ci\^encias Exatas.}
\centerline{Rua Crist\'ov\~ao Colombo, 2265, Jardim Nazareth, 15054-000}
\centerline{S\~ao Jos\'e do Rio Preto, SP, Brasil.}
}

\bigskip

\begin{abstract}
In this work we consider a class of endomorphisms of $\mathbb{R}^2$ defined by $f(x,y)=(xy+c,x)$, where $c\in\mathbb{R}$ is a real number and we prove that when $-1<c<0$, the forward filled Julia set of $f$ is the union of stable manifolds of fixed and $3-$periodic points of $f$. We also prove that the backward filled Julia set of $f$ is the union of unstable manifolds of the saddle fixed and $3-$periodic points of $f$.
\end{abstract}

\section{Introduction}

Let $f:\mathbb{C}\longrightarrow \mathbb{C}$ be a holomorphic map.
The forward filled Julia set associated to $f$ (or simply filled Julia set) is by definition the set
$\mathcal{K}^+(f)=\{z\in\mathbb{C}: (f^n(z))_{n\geq 0} \textrm{ is bounded}\}$, where $f^n$
is the $n-$th iterate of $f$. This definition can be to extended to polynomial maps defined in $\mathbb{C}^d$, for $d\geq 2$.

Filled Julia sets and its boundary (called Julia set) have many topological and dynamical properties.
These sets were defined independently by Fatou and Julia (see \cite{fatou19} and \cite{fatou21}, \cite{julia18} and \cite{julia22}) and they are associated with many areas as dynamical systems, number theory, topology and functional analysis (see \cite{dv}).

Higher dimensional Julia sets were studied by many mathematicians. In particular, when $f$ is a H\'enon map defined by
$f(x,y)=(y,P(y)+c-ax)$, for all $(x,y)\in \mathbb{C}^2$, where $P$ is a complex polynomial function and $a,c$ are fixed complex numbers, many topological and dynamical properties of the forward and backward Julia set associated to $f$ have been proved (\cite{bs}, \cite{fs}). For example, in \cite{hw} the authors considered the map $H_a:\mathbb{R}^2\longrightarrow \mathbb{R}^2$ defined by $H_a(x,y)=(y,y^2+ax)$ where $0<a<1$ is given and they proved that $\mathcal{K}(H_a)=\{\alpha,p\}\cup[W^s(\alpha)\cap W^u(p)]$, where $\alpha=(0,0)$ is the attracting fixed point of $H_a$, $p=(1-a,1-a)$ is the repelling fixed point of $H_a$ and $\mathcal{K}(H_a):=\mathcal{K}^+(H_a)\cap \mathcal{K}^-(H_a)$ is the Julia set associated $H_a$.

Another interesting class of Julia sets was considered in \cite{g}. Precisely, the author considered a family of maps $f_{\alpha,\beta}:\mathbb{C}^2\longrightarrow\mathbb{C}^2$ defined by $(x,y)\mapsto(xy+\alpha,x+\beta)$ with $\alpha,\beta\in\mathbb{C}$ and he proved, among other things, that the family $f_{\alpha,\beta}$ have a measure of the maximal entropy $\frac{1+\sqrt{5}}{2}$.  In \cite{ms}, the authors considered $f_\alpha=f_{\alpha,0}$: they  extended the Killeen and Taylor stochastic adding machine in base $2$ (see \cite{killentaylor}) to the Fibonacci base and they proved that the spectrum of the transition operator associated with this stochastic adding machine is related to the set $\mathcal{K}^+(f_\alpha)$, where $\alpha\in\mathbb{R}$ is a real value. In \cite{am}, the authors proved that the spectrum of theses operators in other Banach spaces contains the set $\{z\in\mathbb{C}^2:(z,z)\in\mathcal{K}^+(f_\alpha)\}$. In \cite{abms}, the authors studied many topological properties of the set $\mathcal{K}^+(f_\alpha)\cap G(h)$ where $h$ is a non null polynomial function and $G(h)$ is the graph of $h$ in $\mathbb{C}^2$. In particular, they proved that $\mathcal{K}^+(f_\alpha)\cap G(h)$ is a quasi-disk, if $|\alpha|$ is small and $h(z)=z$. A more general class is given in \cite{cmv}, where the authors defined  the stochastic Vershik map related to a stationary ordered Bratteli diagram with incidence matrix $\left(\begin{array}{cc} a & b \\ c & d\end{array} \right)$, $a,b,c,d\in\mathbb{N}$, and they proved that the spectrum of the transition operator associated with this is connected to the set $\mathcal{K}^+(f_{\alpha,a,b,c,d})$, where $f_{\alpha,a,b,c,d}:\mathbb{C}^2\longrightarrow\mathbb{C}^2$ is a family of maps defined by $(x,y)\mapsto(x^ay^b+\alpha,x^cy^d+\alpha)$ with $\alpha\in\mathbb{R}$.

In this paper, we will study the set $\mathcal{K}^+=\mathcal{K}^+(f)$, where
$f=f_c:\mathbb{R}^2\longrightarrow\mathbb{R}^2$ is defined by
$f(x,y)=(xy+c,x)$ and $c\in\mathbb{R}$, is a real number with $-1<c<0$.
This set was studied by S. Bonnot, A. de Carvalho and A.
Messaoudi in \cite{bcm}, in the case where $0\leq c<\frac{1}{4}$. They proved that
$\mathcal{K}^+$ is the union of stable manifolds of the fixed and $3$-periodic points of $f$. Furthermore,
they proved that the backward filled Julia set defined by
$\mathcal{K}^-:=\{(x,y)\in\mathbb{R}^2:f^{-n}(x,y) \textrm{ is defined for all } n\in\mathbb{N} \textrm{ and } (f^{-n}(x,y))_{n\geq 0} \textrm{
	is bounded}\}$ is the union of unstable manifolds of the saddle fixed and $3$-periodic points of $f$.

Here, we will extend the results of \cite{bcm} to  the case $-1< c <0$. The extension can not be deduced directly from the study of the case $c\in \left[0,\frac{1}{4}\right]$ since the dynamic are different. We will discuss these differences later in the paper.

The paper is organized as follows. In Section \ref{secaokmais} we prove that for $-1<c<0$, $\mathcal{K}^+$ is the union of stable manifolds of the fixed and $3$-periodic points of $f$. For this, we need to give a filtration of $\mathbb{R}^2$ and from Proposition \ref{propinf} we obtain the subsets of this filtration that are not contained in $\mathcal{K}^+$. In Subsection \ref{subsecr0r1r2r3} we give the subsets of the filtration that are contained in the stable manifold of the attracting fixed point of $f$. In Subsection \ref{subsecwstheta} (and \ref{subsecwsp}) we give the description of stable manifold of the saddle fixed point (and $3$-periodic points) of $f$. Section \ref{secaokmenos} is devoted to prove that when $-1<c<0$, the backward filled Julia set $\mathcal{K}^-$ is the union of unstable manifolds of the saddle fixed and $3$-periodic points of $f$.

\section{Description of $\mathcal{K}^+$}
\label{secaokmais}

Let $c\in \mathbb{R}$ be a real number and consider $f:\mathbb{R}^2\longrightarrow\mathbb{R}^2$ the map defined by $f(x,y)=(xy+c,x)$.

The goal of this section is to study the filled Julia set
\begin{center} 
	$\mathcal{K}^+=\{(x,y)\in\mathbb{R}^2:(f^n(x,y))_{n\geq 0} \textrm{ is bounded}\}$.
\end{center}
\begin{figure}[!h]
	\centering
	\includegraphics[scale=0.29]{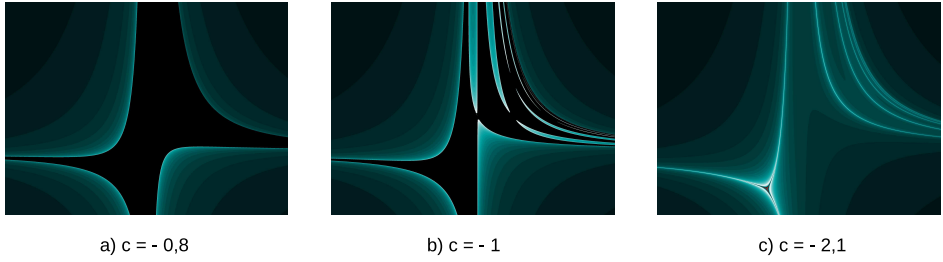}
	\caption{The set $\mathcal{K}^+$ in the cases: $(a)$ $c=-0.8$, $(b)$ $c=-1$, $(c)$ $c=-2.1$.} \label{kmais}
\end{figure}

It is easy to check that $\alpha=(a_1,a_1)$ and $\theta=(a_2,a_2)$ are the two fixed points of $f$, where $a_1=\frac{1-\sqrt{1-4c}}{2}$ and $a_2=\frac{1+\sqrt{1-4c}}{2}$. Obviously, $\alpha,\theta\in \mathbb{R}^2$ if and only if $c\leq \frac{1}{4}$. Also, $p=(-1,-1)$, $f(p)=(1+c,-1)$ and $f^2(p)=(-1,1+c)$ is the only $3-$cycle of $f$. Furthermore, the following properties are valid:
	
	\noindent $(a)$ if $c<\frac{1}{4}$, then $\theta$ is a saddle fixed point of $f$;
	
	\noindent $(b)$ if $-2<c<\frac{1}{4}$, then $\alpha$ is an attracting fixed point  of $f$;
	
	\noindent $(c)$ if $c=-2$, then $\alpha$ is an indifferent fixed point of $f$ with eigenvalues $e^{\pm \frac{2\pi i}{3}}$;
	
	\noindent $(d)$ if $c<-2$, then $\alpha$ is a repelling fixed point of $f$;
	
	\noindent $(e)$ if $c=\frac{1}{4}$, then $\alpha=\theta=\left(\frac{1}{2},\frac{1}{2}\right)$ and the corresponding eigenvalues are $1$ and $-\frac{1}{2}$.

For the proof of $(a)$, $(b)$, $(c)$, $(d)$ and $(e)$, we refer to \cite{bcm} (Proposition 3.1).

\begin{definition}
For each $\tau\in\{\alpha,\theta,p,f(p),f^2(p)\}$, let $W^s(\tau)$ and $W^u(\tau)$ the stable and unstable manifolds (respectively) of $f$ defined by
$$
\begin{array}{lr}
W^s(\tau)=\{z\in\mathbb{R}^2:\lim_{n\to +\infty} d(f^n(z),f^n(\tau))=0\} & and  \\
W^u(\tau)= \{z\in\mathbb{R}^2:\lim_{n\to +\infty} d(f^{-n}(z),f^{-n}(\tau))=0\}. &
\end{array}
$$
\end{definition}

When $c=0$, we have that $\alpha=(0,0)$, $\theta=(1,1)$, $p=(-1,-1)$, $f(p)=(1,-1)$ and $f^2(p)=(-1,1)$. In Figure \ref{casoc0} we can see $\mathcal{K}^+$ (when $c=0$), where $W^s(\alpha)$ is the gray region, i.e. $W^s(\alpha)=int(\mathcal{K}^+)$, $W^s(\theta)$ is the curve which contains the point $\theta$ and $W^s(p)\cup W^s(f(p))\cup W^s(f^2(p))$  are the curves which contains the points $p$, $f(p)$ and $f^2(p)$.

\begin{figure}[!h]
	\centering
	\includegraphics[scale=0.5]{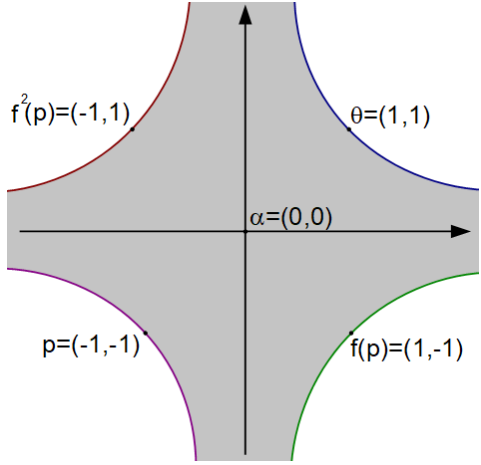}
	\caption{The set $\mathcal{K}^+$ when $c=0$.} \label{casoc0}
\end{figure}

In this section, we will concentrate on proving the following theorem.

\begin{theorem}
	If $-1< c<0$, then $\mathcal{K}^+=W^s(\alpha)\cup W^s(\theta)\cup
	W^s(p)\cup W^s(f(p))\cup W^s(f^2(p))$, where $p=(-1,-1)$,
	$\alpha=(a_1,a_1)$ and $\theta=(a_2,a_2)$. \label{teo34}
\end{theorem}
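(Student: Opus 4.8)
\emph{Proof plan.}

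The inclusion $W^s(\alpha)\cup W^s(\theta)\cup W^s(p)\cup W^s(f(p))\cup W^s(f^2(p))\subseteq\mathcal{K}^+$ is the easy half: any point of one of these stable manifolds has its forward orbit converging to a point of the finite set $\{\alpha\}$, $\{\theta\}$ or $\{p,f(p),f^2(p)\}$, hence the orbit is bounded. The content is therefore the reverse inclusion, and I would build the proof on a finite filtration of $\mathbb{R}^2$. It is convenient to regard $f$ as the second-order recursion $x_{n+1}=x_nx_{n-1}+c$ (with $(x,y)=(x_1,x_0)$), so that $\mathcal{K}^+$ is exactly the set of initial data with $(x_n)_{n\ge 0}$ bounded; in these coordinates the basic escape estimate, namely $|x_nx_{n-1}+c|>\max(|x_n|,|x_{n-1}|)$ as soon as $|x_n|$ and $|x_{n-1}|$ both exceed $\tfrac{1+\sqrt{1-4c}}{2}=a_2$, is transparent. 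The filtration is then cut out by the signs of the two coordinates together with their sizes relative to $1$ and to $a_2$, and Proposition \ref{propinf} is the bookkeeping lemma that marks the ``escaping'' blocks, with the property that an orbit which once enters the escaping part stays there and has $\|f^n\|\to\infty$.

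With the filtration in place, the reverse inclusion follows from a trichotomy for an arbitrary $z\in\mathbb{R}^2$: either (i) the orbit of $z$ eventually meets an escaping block, so $z\notin\mathcal{K}^+$; or (ii) it eventually enters the ``core'' blocks $R_0,R_1,R_2,R_3$ of Subsection \ref{subsecr0r1r2r3}, which are forward-absorbed into a trapping neighbourhood of the attracting fixed point, whence $z\in W^s(\alpha)=\mathrm{int}(\mathcal{K}^+)$; or (iii) the orbit stays forever in the union of the remaining ``boundary'' blocks, avoiding both the escape set and the $\alpha$-core. Cases (i) and (ii) are disposed of by the cited results, and together they account for all of $\mathbb{R}^2$ except the forward-invariant set of type-(iii) points, which is identified with $W^s(\theta)\cup W^s(p)\cup W^s(f(p))\cup W^s(f^2(p))$ in Subsections \ref{subsecwstheta} and \ref{subsecwsp}. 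The mechanism there is the usual one for stable manifolds of a saddle: on the thin boundary blocks surrounding $\theta$, respectively the saddle $3$-cycle, the dynamics of $f$ admits an invariant family of arcs transverse to the one-dimensional stable direction, along which $f$ is expanding, so exactly one such arc through $\theta$ (resp.\ through each of $p,f(p),f^2(p)$) consists of points that never leave its block; this arc is the local stable manifold, and its full pull-back gives $W^s(\theta)$ (resp.\ $W^s(p)\cup W^s(f(p))\cup W^s(f^2(p))$). Assembling (i)--(iii) yields $\mathcal{K}^+=W^s(\alpha)\cup W^s(\theta)\cup W^s(p)\cup W^s(f(p))\cup W^s(f^2(p))$.

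The main obstacle is that essentially every ingredient---the shape of the filtration, the escape estimate behind Proposition \ref{propinf}, and the absorption property of $R_0,\dots,R_3$---is sensitive to the sign of $c$ and cannot be imported verbatim from \cite{bcm}. For $0\le c<\tfrac14$ one has $a_1\ge 0$, so $\alpha=(a_1,a_1)$ lies in the closed first quadrant and the $3$-cycle is $\{(-1,-1),(1,-1),(-1,1)\}$; for $-1<c<0$ one has instead $a_1<0$, so $\alpha$ moves into the third quadrant and the cycle becomes $\{(-1,-1),(1+c,-1),(-1,1+c)\}$ with $1+c\in(0,1)$. This changes which sign patterns of the recursion grow and which contract, so the filtration has to be redesigned and the escape and absorption estimates re-proved. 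The most delicate point is the analysis of the type-(iii) orbits near the $3$-cycle: one must rule out bounded orbits confined to the boundary blocks that converge to neither $\theta$ nor the cycle, and show that the non-escaping locus in those blocks is a single arc rather than something larger---this is where the combinatorics imposed by the period-$3$ sign pattern $(-1,-1,1+c)$ is essential, and it is the technical core of Subsections \ref{subsecwstheta} and \ref{subsecwsp}.
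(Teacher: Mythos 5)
Your overall decomposition is the same as the paper's: the easy inclusion, then a trichotomy driven by the filtration (escape blocks $S=L\cup M\cup N\cup P$ via Proposition \ref{propinf}; the core $\mathcal{R}=R_0\cup\dots\cup R_3$ absorbed into $W^s(\alpha)$ via Proposition \ref{propostion2}; and the residual forward-invariant boundary blocks split between the $\theta$-cycle $A\leftrightarrow H_2$ and the period-$6$ block cycle around $\{p,f(p),f^2(p)\}$). Your remarks about why the case $-1<c<0$ cannot be imported from the case $0\le c<\tfrac14$ are also on target.

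The genuine gap is in your mechanism for case (iii). You propose the standard saddle picture: an invariant family of arcs transverse to the stable direction on which $f$ expands, so that ``exactly one arc'' of non-escaping points survives and equals the local stable manifold, whose pull-back gives $W^s(\theta)$ (resp.\ the $3$-cycle manifolds). This does not work as stated, for two reasons. First, the relevant blocks are unbounded ($A=[0,a_2]\times[a_2,+\infty[$, $H_2=[a_2,+\infty[\times[0,a_2]$, and likewise $F,C,E,\dots$ for the cycle), so there is no uniform transverse expansion and no local graph-transform argument that covers \emph{all} points whose entire forward orbit stays in these blocks; a point far from $\theta$ could a priori remain in $A\cup H_2$ forever without ever entering a neighbourhood where linearization applies. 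Second, what the theorem actually requires is not that the non-escaping locus is a single arc (the paper never establishes or uses this), but that \emph{every} point of $\bigcap_{n\ge0}f^{-n}(A\cup H_2)$ converges to $\theta$, and similarly for the $6$-block cycle. The paper's proof of this is elementary and global: for an orbit confined to $A\cup H_2$ one shows (Lemma \ref{propa2}) that $(x_{2n})_n$ is increasing and $(y_{2n})_n$ is decreasing, both bounded, and that the common limit must solve $l=l^2+c$ with $l>0$, forcing $l=a_2$; the analogous monotonicity of the $6$-step subsequences in the cycle $F\to C\to E\to G\to B\to D$ forces convergence to $\{p,f(p),f^2(p)\}$. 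Replacing these monotonicity arguments by a hyperbolicity/invariant-arc claim leaves the global confinement problem unaddressed, and that is precisely the technical core you would still have to supply. (A minor further over-claim: you assert $W^s(\alpha)=\mathrm{int}(\mathcal{K}^+)$, which the paper only illustrates for $c=0$ and neither proves nor needs for $-1<c<0$; and your escape estimate ``both coordinates exceed $a_2$ in modulus'' does not by itself cover the blocks $M,N,P$, where one coordinate can lie in $[1+c,a_2]$ --- the cycle $M\to N\to P\to M$ of Proposition \ref{propinf} is what handles those.)
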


For the proof of Theorem \ref{teo34}, we use the following notations and filtration of the plane (see Figure \ref{figlmnp}).

\begin{eqnarray} \label{notacao}
\begin{array}{lcl}
L=[a_2,+\infty [ \times [a_2,+\infty [, & \hspace{.3cm} & \mathcal{R}=\mathcal{R}'\cup R_0, \\
M=]-\infty,-1 ] \times [1+c,+\infty [,  & & A=[0,a_2]\times[a_2,+\infty[, \\
N=]-\infty,-1] \times ]-\infty,-1 ],    & & B=[-1,0]\times[1+c,+\infty[, \\
P=[1+c,+\infty [ \times ]-\infty,-1],   & & C=]-\infty,-1]\times[0,1+c],  \\
S'=L\cup M\cup N\cup P,                 & & D=]-\infty,-1]\times[-1,0], \\
S=S'\setminus\{\theta,p,f(p),f^2(p)\},  & & E=[-1,0]\times]-\infty,-1], \\
R_0=[0,1+c]\times [0,a_2],              & & F=[0,1+c]\times]-\infty,-1], \\
R_1=[-1,0]\times [0,1+c],               & & G=[1+c,+\infty[\times[-1,0],  \\
R_2=[-1,0]\times [-1,0],                & & H_1=[1+c,a_2]\times[0,a_2],  \\
R_3=[0,1+c]\times [-1,0],               & & H_2=[a_2,+\infty[\times[0,a_2]. \\
\mathcal{R}' =R_1 \cup R_2\cup R_3, & & 
\end{array}
\end{eqnarray}

\begin{figure}[!h]
	\centering
	\includegraphics[scale=0.6]{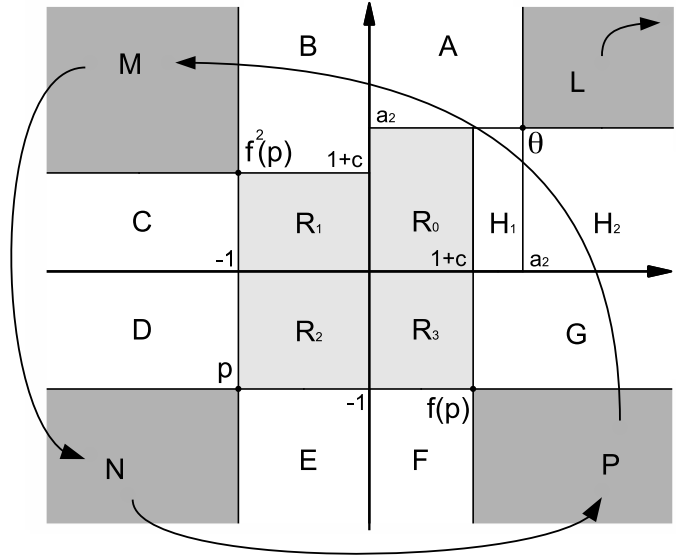}
	\caption{Filtration of $\mathbb{R}^2$ for $f$ in the case $-1<c<0$.} \label{figlmnp}
\end{figure}

In the proof of Theorem \ref{teo34}, we will start by proving that $S\subset \mathbb{R}^2\setminus \mathcal{K}^+$.

After that, we will prove that $\mathcal{R}\setminus \{p,f(p),f^2(p)\}\subset W^s(\alpha)$ and $W^s(\theta)=\bigcap_{n=0}^{+\infty}f^{-n}(A\cup H_2)$, i.e. $z$ is a point of $W^s(\theta)$ if and only if its orbit is contained in the set $A\cup H_2$.

Finally, we end up by proving that $\mathcal{K}^+\setminus W^s(\alpha)\cup W^s(\theta)=W^s(p)\cup W^s(f(p))\cup W^s(f^2(p))$.

In other words, we need the following Proposition. Its proof is similar to that given in \cite{bcm} (Proposition 3.5, pages 629, 630 and 631).

\begin{proposition}
If $-1< c<0$, then the following properties are valid:
	
	\noindent $1.$ $f(L)\subset L$, $f(M)\subset N$, $f(N)\subset P$ and $f(P)\subset M$.
	
	\noindent $2.$ For all $(x,y)\in S$, $\|f^n(x,y)\|_\infty$ diverges to $+\infty$ as $n$ goes to $+\infty$.
	\label{propinf}
\end{proposition}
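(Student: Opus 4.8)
The plan is to prove part~1 by a one‑step computation and then to deduce part~2 by following, through the filtration it provides, the scalar recursion hidden inside $f$. Write $f^{n}(x,y)=(x_{n},x_{n-1})$ with $x_{-1}=y$, $x_{0}=x$; since the second coordinate of $f$ is a shift, $x_{n+1}=x_{n}x_{n-1}+c$ for all $n\ge 0$. We will use that $a_{2}$ is a fixed point (so $a_{2}^{2}+c=a_{2}$ and $a_{2}>1$) and that $-1<c<0$ gives $0<1+c<1$ and $|c|<1$. \textbf{Part 1} is immediate: each inclusion amounts to two inequalities on $f(x,y)=(xy+c,x)$, namely $xy+c\ge a_{2}^{2}+c=a_{2}$ on $L$; $xy\le -y\le-(1+c)$, so $xy+c\le-1$, on $M$; $xy\ge 1$, so $xy+c\ge 1+c$, on $N$; and $xy\le -x\le-(1+c)$, so $xy+c\le -1$, on $P$; in each case the second coordinate $x$ lands in the correct interval by definition of the region.

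For \textbf{part 2}, fix $(x,y)\in S$; since $S'=L\sqcup M\sqcup N\sqcup P$, two cases arise. If $(x,y)\in L\setminus\{\theta\}$, part~1 gives $f^{n}(x,y)\in L$, so $u_{n}:=x_{n}-a_{2}\ge 0$, and $a_{2}^{2}+c=a_{2}$ turns the recursion into $u_{n+1}=u_{n}u_{n-1}+a_{2}(u_{n}+u_{n-1})\ge a_{2}u_{n}$. As $(x,y)\ne\theta$ has both coordinates $\ge a_{2}$, we get $xy>a_{2}^{2}$, hence $u_{1}=xy+c-a_{2}>0$; then $u_{n}\ge a_{2}^{\,n-1}u_{1}\to+\infty$ because $a_{2}>1$, so $\|f^{n}(x,y)\|_{\infty}\ge x_{n}\to+\infty$.

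If instead $(x,y)\in(M\cup N\cup P)\cap S$, then a direct computation gives $f^{-1}(\{p\})=\{f^{2}(p)\}$ and $f^{-2}(\{p\})=\{f(p)\}$, so (using $f(M)\subset N$, $f^{2}(P)\subset N$) we may replace $(x,y)$ by $f(x,y)$ or $f^{2}(x,y)$ and assume $(x,y)\in N\setminus\{p\}$. By part~1 the orbit cycles $N\to P\to M\to\cdots$, hence $(x_{n})_{n\ge-1}$ has the exact sign pattern $x_{n}\ge1+c>0$ for $n\equiv1\pmod 3$ and $x_{n}\le-1$ otherwise; writing $a_{n}=|x_{n}|$, the recursion becomes $a_{n+1}=a_{n}a_{n-1}-|c|$ when $x_{n},x_{n-1}$ are both negative (the product being $\ge1>|c|$) and $a_{n+1}=a_{n}a_{n-1}+|c|$ when exactly one is positive. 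To control the distance to the $3$-cycle $\{p,f(p),f^{2}(p)\}$, set $\beta_{n}=a_{n}-1$ at negative terms and $\gamma_{n}=a_{n}-(1+c)$ at positive ones; using $1+c=1-|c|$ one gets $\gamma_{n+1}=\beta_{n}+\beta_{n-1}+\beta_{n}\beta_{n-1}$ after two negative terms and $\beta_{n+1}=(1+c)\beta_{n-1}+\gamma_{n}(1+\beta_{n-1})\ge\gamma_{n}$, resp.\ $\beta_{n+1}=(1+c)\beta_{n}+\gamma_{n-1}(1+\beta_{n})\ge\gamma_{n-1}$, in the two mixed steps. Since $(x,y)\ne p$, the excesses $\beta_{-1},\beta_{0}$ are not both zero, so $\gamma_{1}=\beta_{0}+\beta_{-1}+\beta_{0}\beta_{-1}>0$; chaining one period of the identities yields $\beta_{3k+2},\beta_{3k+3}\ge\gamma_{3k+1}$ and then $\gamma_{3k+4}\ge\beta_{3k+2}+\beta_{3k+3}\ge2\gamma_{3k+1}$, hence $\gamma_{3k+1}\ge2^{k}\gamma_{1}\to+\infty$. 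As every $a_{n}$ with $n\ge1$ is bounded below by $(1+c)+2^{\lfloor(n-1)/3\rfloor}\gamma_{1}$, we conclude $\|f^{n}(x,y)\|_{\infty}\to+\infty$.

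The routine part is part~1 together with the $L$-case, which follow \cite{bcm} closely. The real difficulty is the region $M\cup N\cup P$: there is no single expanding direction to exploit, the ``$P$-coordinate'' $1+c$ degenerates to $0$ as $c\to-1$, and the $3$-cycle $\{p,f(p),f^{2}(p)\}$ sits exactly on the boundary of the escaping set, forcing one to run the estimate relative to that cycle — this is precisely where the dynamics part company with the case $c\in[0,\tfrac14]$ of \cite{bcm}. The technical heart is verifying that the ``excess'' identities have the displayed sign structure, so that once an excess becomes positive all later ones are positive, and that the doubling $\gamma_{3(k+1)+1}\ge2\gamma_{3k+1}$ holds uniformly along the orbit.
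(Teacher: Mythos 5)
Your proposal is correct. Part 1 is the same one-step computation the paper relies on, and your verification of each of the four inclusions is accurate (in particular the signs work out because $0<1+c<1<a_2$ and $a_2^2+c=a_2$). For part 2 the paper gives no argument of its own — it only points to Proposition 3.5 of \cite{bcm}, which treats $0\le c<\tfrac14$ — so your contribution is a self-contained escape estimate adapted to $-1<c<0$. The $L$-case via $u_n=x_n-a_2$ and $u_{n+1}=u_nu_{n-1}+a_2(u_n+u_{n-1})\ge a_2u_n$ is clean and correct (the hypothesis $(x,y)\ne\theta$ is exactly what gives $u_1>0$). The reduction of the $M\cup N\cup P$ case to $N\setminus\{p\}$ is justified because $p$, $f(p)$, $f^2(p)$ each have a unique $f$-preimage lying in the cycle itself, so the shifted point cannot land on the cycle. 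The sign pattern along the $3$-cycle of regions, the identities $\gamma_{n+1}=\beta_n+\beta_{n-1}+\beta_n\beta_{n-1}$, $\beta_{n+1}=(1+c)\beta_{n-1}+\gamma_n(1+\beta_{n-1})$ (and its companion), and the resulting doubling $\gamma_{3k+4}\ge 2\gamma_{3k+1}$ all check out, and $\gamma_1>0$ follows from $(x,y)\ne p$. What your route buys over a direct monotonicity argument in the style of \cite{bcm} is that measuring the orbit by its excess over the $3$-cycle $\{p,f(p),f^2(p)\}$ gives a uniform geometric lower bound ($\gamma_{3k+1}\ge 2^k\gamma_1$) that is insensitive to $1+c\to 0$, which is precisely the degeneration that prevents a verbatim transfer of the $c\in[0,\tfrac14)$ proof.
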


\begin{proposition}
If $-1<c<0$ and $z\in \mathcal{R}\setminus \{p,f(p),f^2(p)\}$, then $\displaystyle\lim_{n\to+\infty}f^n(z)=(a_1,a_1)=\alpha$. \label{propostion2}
\end{proposition}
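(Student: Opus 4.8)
The plan is to show that every point of $\mathcal{R}\setminus\{p,f(p),f^2(p)\}$ is eventually mapped into the square $R_0=[0,1+c]\times[0,a_2]$, and then to prove that $R_0$ (minus nothing, since $p\notin R_0$) is contained in $W^s(\alpha)$ by a direct contraction argument using the fact that $\alpha$ is an attracting fixed point when $-1<c<0$ (property $(b)$ above). First I would analyze the action of $f$ on the three small rectangles $R_1=[-1,0]\times[0,1+c]$, $R_2=[-1,0]\times[-1,0]$, $R_3=[0,1+c]\times[-1,0]$. A short computation of $f(x,y)=(xy+c,x)$ on the corners shows how these rectangles permute or flow: for instance a point of $R_3$ has $x\in[0,1+c]$, $y\in[-1,0]$, so $xy+c\in[c,\,c+0]=[c,0]$ roughly, hence $f(R_3)$ lands with first coordinate in $[-1,0]$ and second in $[0,1+c]$, i.e. essentially into $R_1$; similarly one tracks $f(R_1)$, $f(R_2)$, $f(R_0)$. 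The goal of this bookkeeping is to establish that $\mathcal{R}'=R_1\cup R_2\cup R_3$ is carried, after finitely many steps (uniformly, or at least pointwise), into $R_0$, with the sole exception of the cycle $\{p,f(p),f^2(p)\}$ and points on its stable set — but since we have excised exactly $p,f(p),f^2(p)$ and will separately have handled $S\supset N\ni p$ via Proposition \ref{propinf}, I expect the only orbits trapped forever in $\mathcal{R}'$ to be $p$ and its images, which are removed.

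Next I would handle $R_0$ itself. On $R_0$ we have $x\in[0,1+c]$ and $y\in[0,a_2]$, and since $-1<c<0$ the fixed point $\alpha=(a_1,a_1)$ with $a_1=\tfrac{1-\sqrt{1-4c}}{2}\in(-1,0)$ actually lies slightly outside $R_0$ — so the cleaner statement is that $f(R_0)$ is a proper subset of a neighborhood on which $f$ is a contraction toward $\alpha$. I would show $f(R_0)\subset R_0'$ for some slightly larger rectangle or, better, directly estimate: writing $f^n(z)=(x_n,x_{n-1})$ with $x_{n+1}=x_n x_{n-1}+c$, show that the sequence $(x_n)$ converges to $a_1$. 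Because $|a_1|<1$ and $c\in(-1,0)$, the recursion $x_{n+1}=x_nx_{n-1}+c$ has $a_1$ as an attracting fixed point of the associated $2$-step map; the linearization has eigenvalues the roots of $\lambda^2=2a_1\lambda$... more precisely the Jacobian of $f$ at $\alpha$ is $\begin{pmatrix} a_1 & a_1\\ 1 & 0\end{pmatrix}$ with eigenvalues of modulus $<1$ by property $(b)$, so there is a local basin, and the combinatorial work above funnels all of $\mathcal{R}$ into that basin.

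The step I expect to be the main obstacle is making the "funneling" precise near the boundary of the basin and near the excluded cycle: one must rule out that some point of $\mathcal{R}\setminus\{p,f(p),f^2(p)\}$ has an orbit that forever avoids $R_0$ (staying in $R_1\cup R_2\cup R_3$) or that accumulates on the $3$-cycle without equalling it. I would address the first by showing the only $f$-invariant subset of $\mathcal{R}'$ is $\{p,f(p),f^2(p)\}$ — e.g. any orbit staying in $\mathcal{R}'$ must have all coordinates in $[-1,1+c]$, and iterating the estimate on $x_{n+1}=x_nx_{n-1}+c$ forces $|x_n|\to$ the cycle value or into $R_0$; for the second, the $3$-cycle $\{p,f(p),f^2(p)\}$ is itself a saddle or repelling periodic orbit (its multiplier can be computed from $Df^3(p)$), so nearby points are pushed away and eventually into $R_0$, after which the contraction estimate finishes the proof. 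Once a point's orbit enters the basin of $\alpha$, $\lim_{n\to+\infty}f^n(z)=\alpha$ follows immediately, giving $\mathcal{R}\setminus\{p,f(p),f^2(p)\}\subset W^s(\alpha)$.
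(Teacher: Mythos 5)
Your proposal has the geometry of the filtration backwards, and this is not a cosmetic issue. For $-1<c<0$ one has $a_1=\tfrac{1-\sqrt{1-4c}}{2}\in(-1,0)$, so $\alpha=(a_1,a_1)$ lies in the third-quadrant square $R_2=[-1,0]\times[-1,0]$ (indeed in $Y=[c,0]\times[c,0]\subset R_2$, which is forward invariant), not in or near $R_0=[0,1+c]\times[0,a_2]$. The actual dynamics is the opposite of what you describe: $f(\mathcal{R}')\subset\mathcal{R}'$, so $\mathcal{R}'=R_1\cup R_2\cup R_3$ never returns to $R_0$, while $R_0$ is expelled into $\mathcal{R}'$ after finitely many steps (the paper shows $f^n(R_0)\cap R_0$ shrinks like $(1+c)^{F_n}$ with Fibonacci exponents and is eventually empty). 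Consequently your key claims --- that $\mathcal{R}'$ is funneled into $R_0$, and that the only $f$-invariant subset of $\mathcal{R}'$ is $\{p,f(p),f^2(p)\}$ --- are both false: $\{\alpha\}$ and all of $Y$ are invariant subsets of $\mathcal{R}'$, and that is precisely where the orbits must end up. A correct argument must also rule out orbits that cycle $R_2\to R_3\to R_1\to R_2$ forever without entering $Y$; the paper does this with a monotone-sequence argument built on the scalar map $g(x)=x(x^2+c)+c$, whose only real fixed points are $-1,a_1,a_2$, and your sketch contains no substitute for this step.

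The second gap is that invoking property $(b)$ (that $\alpha$ is attracting) only gives a \emph{local} basin, and nothing in your funneling delivers the orbit into an arbitrarily small neighborhood of $\alpha$ --- it only delivers it into a fixed-size region. The bulk of the paper's proof (the sub-filtration $R,S,T,U$ of $Y$, and for $-1<c\leq-\tfrac{3}{4}$ the further decomposition $Z_0,\dots,Z_4$ with the explicit contraction estimate $f^{n_i}(z)\in R(|c|^i|a_1|^3,|c|^i|a_1|^3)$) exists exactly to prove global convergence on $Y$, which happens in a spiral rather than monotone fashion; the author flags this as the essential new difficulty compared with the case $0<c<\tfrac14$. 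Your proposal replaces this with an appeal to the linearization at $\alpha$, which does not close the argument.
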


A difference between this work and \cite{bcm} is Proposition \ref{propostion2}. Here, we have that $\mathcal{R}\setminus\{p,f(p),f^2(p)\}\subset W^s(\alpha)\subset \mathcal{K}^+$, with $f(\mathcal{R})\subset \mathcal{R}$. In \cite{bcm}, $\mathcal{R}\cap W^s(\alpha)\neq\emptyset$, $\mathcal{R}\cap W^s(\theta)\neq\emptyset$ and $\mathcal{R}\setminus\mathcal{K}^+\neq \emptyset$. Furthermore, here we will see that the points of $W^s(\alpha)$ converges to $\alpha$ in a spiral way, while in \cite{bcm} the points of $W^s(\alpha)$ converges to $\alpha$ in a  monotonous way.

The proof of Proposition \ref{propostion2} will be displayed in Subsection \ref{subsecr0r1r2r3}.

\begin{proposition} If $-1<c<0$, then \label{proposition3} $W^s(\theta)= \bigcap_{n=0}^{+\infty}f^{-n}(A\cup H_2)$.
\end{proposition}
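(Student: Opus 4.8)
The plan is to prove the two inclusions of the identity $W^s(\theta)=\bigcap_{n\geq 0}f^{-n}(A\cup H_2)$ separately, using the fact that $A\cup H_2=[0,+\infty[\,\times[a_2,+\infty[$ is a bounded-below ``strip'' through which $\theta=(a_2,a_2)$ passes, and that (by Proposition~\ref{propinf}) the only forward-invariant bounded behaviour available to points whose orbit does not escape through $S$ is to be captured by $\alpha$, $\theta$, or the $3$-cycle. First I would observe that $\theta\in A\cup H_2$ and, since $f(\theta)=\theta$, the whole orbit of $\theta$ lies in $A\cup H_2$; more generally I would check the covering relations $f(A)\subset A\cup H_2$ and $f(H_2)\subset A\cup H_2$ (a direct computation: if $x\geq 0$ and $y\geq a_2$ then the first coordinate $xy+c$ and the second coordinate $x$ of $f(x,y)$ place the image back in the strip, using $a_2^2+c=a_2$), so that $\bigcap_{n\geq 0}f^{-n}(A\cup H_2)$ is exactly the set of points whose full forward orbit stays in $A\cup H_2$, and in particular this set is $f$-forward-invariant.

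For the inclusion $W^s(\theta)\subset\bigcap_n f^{-n}(A\cup H_2)$: if $z\in W^s(\theta)$ then $f^n(z)\to\theta$, so for large $n$ the orbit lies in a small neighbourhood of $\theta$, which is in the interior of $A\cup H_2$; I then need to push this back to $n=0$, i.e. show no early iterate leaves the strip. Here I would argue by the filtration: a point of $\mathcal{K}^+$ whose orbit is not eventually in $A\cup H_2$ must, by Proposition~\ref{propinf} and Proposition~\ref{propostion2}, either escape (impossible, as $z\in\mathcal{K}^+$) or enter $\mathcal{R}$, whence by Proposition~\ref{propostion2} it would converge to $\alpha\neq\theta$ — contradiction; so the orbit stays in a region from which the only escape is into $S$ or $\mathcal{R}$, and combined with $f^n(z)\to\theta$ this forces the entire orbit into $A\cup H_2$. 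For the reverse inclusion: if the full orbit of $z$ stays in $A\cup H_2$, then $z\in\mathcal{K}^+$ (the strip minus a neighbourhood of $\infty$... actually the strip is unbounded, so I must first show the orbit is bounded — I would do this by showing $f(A\cup H_2)\subset A\cup H_2$ forces the second coordinate to remain $\geq a_2$ while the escaping part of $A\cup H_2$ maps into $S$, so staying forever in $A\cup H_2$ confines the orbit to a compact part of it). Then $z\in\mathcal{K}^+$, and since its orbit avoids $\mathcal{R}$ (so $z\notin W^s(\alpha)$ by the already-proved structure) and avoids the $3$-cycle's basin, the only possibility left from the decomposition of $\mathcal{K}^+$ is $z\in W^s(\theta)$; alternatively, and more directly, I would show that the map restricted to the compact forward-invariant core of $A\cup H_2$ is a contraction towards $\theta$ in a suitable (possibly adapted) metric, giving $f^n(z)\to\theta$ by hand.

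The main obstacle I anticipate is the reverse inclusion, specifically ruling out that an orbit trapped in $A\cup H_2$ does something other than converge to $\theta$ — a priori it could oscillate within a compact invariant subset of the strip without approaching $\theta$. The clean way around this is to exhibit an explicit trapping region around $\theta$ inside $A\cup H_2$ on which $f$ is hyperbolic with the stable direction filling it out, i.e. to show that the ``horizontal'' width of $f^{-n}(A\cup H_2)$ shrinks to the curve $W^s(\theta)$; quantitatively this should follow from estimating the derivative $Df(x,y)=\begin{pmatrix} y & x \\ 1 & 0\end{pmatrix}$ on the strip, where $y\geq a_2>1$ controls expansion in a cone and the graph-transform argument then identifies $\bigcap_n f^{-n}(A\cup H_2)$ with a single Lipschitz stable curve through $\theta$. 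I expect this hyperbolic/graph-transform step to be where the real work lies, the covering relations in item~1 being routine once the strip bounds are set up. Since the excerpt explicitly says the whole Theorem is proved using this filtration, I would lean on Propositions~\ref{propinf} and~\ref{propostion2} as much as possible to keep the argument combinatorial, invoking the hyperbolic estimate only to pin down that the trapped set is precisely $W^s(\theta)$ rather than something larger.
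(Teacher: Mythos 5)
Your two-inclusion skeleton is right, but the proposal rests on a misreading of the geometry that breaks both halves. You treat $A\cup H_2$ as the single strip $[0,+\infty[\times[a_2,+\infty[$ and assert $f(A\cup H_2)\subset A\cup H_2$; in fact $A=[0,a_2]\times[a_2,+\infty[$ and $H_2=[a_2,+\infty[\times[0,a_2]$ are two rectangles meeting only at the corner $\theta$, and neither is preserved: by Lemma \ref{lema314}, $f(A)\subset B\cup R_0\cup R_1\cup H_1\cup H_2$ and $f(H_2)\subset B\cup A\cup L$, so $A\cup H_2$ is not forward invariant. The correct structural fact (Remark \ref{obsnova}) is that an orbit which never leaves $A\cup H_2$ must \emph{alternate} $A\to H_2\to A\to\cdots$, and this alternation is the engine of the paper's proof of the hard inclusion $\bigcap_{n\geq 0}f^{-n}(A\cup H_2)\subset W^s(\theta)$: writing $f^n(z)=(x_n,y_n)$ with $f^{2n}(z)\in A$, $f^{2n+1}(z)\in H_2$, elementary product inequalities give that $(x_{2n})_n$ is increasing and $(y_{2n})_n$ is decreasing, both bounded by $a_2$ from the correct rectangle constraints, so both converge and the fixed-point equations force the limit $(a_2,a_2)=\theta$ (Lemma \ref{propa2}). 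This simultaneously settles the boundedness worry you raise and the oscillation worry you correctly anticipate. Your proposed replacement — a hyperbolicity/graph-transform argument using $y\geq a_2>1$ on the region — is not carried out and its premise is false on $H_2$, where $y\in[0,a_2]$; and your first fallback ("the only possibility left from the decomposition of $\mathcal{K}^+$ is $W^s(\theta)$") is circular, since that decomposition (Theorem \ref{teo34}, via Proposition \ref{proposition4}) is proved \emph{using} Proposition \ref{proposition3}.

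For the inclusion $W^s(\theta)\subset\bigcap_{n\geq 0}f^{-n}(A\cup H_2)$ your filtration argument is the right idea but omits the two regions that actually touch $\theta$ besides $A\cup H_2$: the quadrant $L$ (handled by Proposition \ref{propinf}: orbits in $L\setminus\{\theta\}$ diverge) and, more importantly, $H_1=[1+c,a_2]\times[0,a_2]$, which has $\theta$ as a corner, so a priori an orbit could converge to $\theta$ while staying in $H_1$. The paper needs a separate argument (Lemma \ref{proph1}) showing every point of $H_1\setminus\{\theta\}$ eventually leaves $H_1$, after which the cycle structure of the remaining regions $B,C,D,E,F,G$ forces the orbit infinitely often outside $[0,+\infty[\times[0,+\infty[$, hence at distance $>a_2$ from $\theta$, contradicting convergence. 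Also note that once an orbit leaves $A\cup H_2$ it never returns (Remark \ref{obsnova} again), which is what lets one pass from "eventually in $A\cup H_2$" to "always in $A\cup H_2$"; your proposal gestures at this but does not supply the mechanism.
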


Proposition \ref{proposition3} means that $W^s(\theta)$ is characterized by the set of points in $A\cup H_2$ that never leave $A\cup H_2$ after each iteration of the map $f$. Its proof will be displayed in Subsection \ref{subsecwstheta}.

\begin{proposition} \label{proposition4}
If $-1<c<0$, then $\mathcal{K}^+\setminus W^s(\alpha)\cup W^s(\theta)=W^s(p)\cup W^s(f(p))\cup W^s(f^2(p))$.
\end{proposition}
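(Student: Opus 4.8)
The plan is to decompose $\mathcal{K}^+$ using the filtration in \eqref{notacao} and the three preceding propositions. By Proposition \ref{propinf}, any point whose orbit enters $S = S'\setminus\{\theta,p,f(p),f^2(p)\}$ escapes to infinity, hence lies outside $\mathcal{K}^+$; and by Proposition \ref{propostion2} every point of $\mathcal{R}\setminus\{p,f(p),f^2(p)\}$ lies in $W^s(\alpha)$. So if $z\in\mathcal{K}^+\setminus(W^s(\alpha)\cup W^s(\theta))$, the forward orbit of $z$ must, from some time on, avoid both $S'$ and $\mathcal{R}$ (it cannot stay in $\mathcal{R}$ forever by Proposition \ref{propostion2}, and once it lands in $\mathcal{R}$ it stays by $f(\mathcal{R})\subset\mathcal{R}$, forcing convergence to $\alpha$). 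The first step is therefore to identify, within the remaining region — roughly $A\cup H_2\cup B\cup C\cup D\cup E\cup F\cup G\cup H_1$ together with the points $\theta,p,f(p),f^2(p)$ — which pieces are mapped where by $f$, producing a refined transition graph among these blocks. Proposition \ref{proposition3} already isolates the points that stay in $A\cup H_2$ forever as exactly $W^s(\theta)$, so those are excluded too.

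Next I would show that a bounded orbit that eventually avoids $S'$, $\mathcal{R}$, and is not eventually trapped in $A\cup H_2$, must be eventually trapped in a neighborhood block-cycle around the $3$-cycle $\{p,f(p),f^2(p)\}$. Concretely, one checks from the explicit formula $f(x,y)=(xy+c,x)$ that the blocks surrounding $p$, $f(p)$, $f^2(p)$ — the relevant ones among $B,C,D,E,F,G,H_1$ — are permuted by $f$ in a $3$-periodic fashion up to the escaping set $S'$ and the basin pieces $\mathcal{R}$; any orbit that does not fall into $S'$ or $\mathcal{R}$ and does not stay in $A\cup H_2$ is forced into this cyclic chain of blocks. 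Since $p$ is a saddle $3$-cycle (eigenvalue computation as in \cite{bcm}), on this invariant neighborhood the dynamics of $f^3$ near each of $p,f(p),f^2(p)$ is hyperbolic, so by the stable manifold theorem the set of points in the block-cycle with bounded ($=$ convergent-to-the-cycle) orbit is precisely $W^s(p)\cup W^s(f(p))\cup W^s(f^2(p))$. This gives the inclusion $\mathcal{K}^+\setminus(W^s(\alpha)\cup W^s(\theta))\subset W^s(p)\cup W^s(f(p))\cup W^s(f^2(p))$.

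For the reverse inclusion, $W^s(p)\cup W^s(f(p))\cup W^s(f^2(p))\subset\mathcal{K}^+$ is immediate since orbits converging to the bounded cycle are bounded; and these stable manifolds are disjoint from $W^s(\alpha)$ and $W^s(\theta)$ because the limit sets of orbits ($\{\alpha\}$, $\{\theta\}$, $\{p,f(p),f^2(p)\}$) are pairwise distinct, so a single orbit cannot converge to two of them. Combining the two inclusions yields the claimed equality. The main obstacle I anticipate is the combinatorial bookkeeping in the first two steps: one must verify carefully, using only the explicit quadratic map and the boundaries in \eqref{notacao}, that the blocks $B,C,D,E,F,G,H_1$ (and $H_2$) really do funnel every non-escaping, non-$\mathcal{R}$, non-$(A\cup H_2)$-trapped orbit into a genuinely $f$-invariant neighborhood of the $3$-cycle on which the stable manifold theorem applies — in particular ruling out orbits that oscillate forever among blocks without converging to any of the periodic points, and checking that the relevant block near each periodic point is small enough (or can be shrunk under iteration) to lie in the linearizing chart. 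This is exactly the place where the $-1<c<0$ dynamics differs from the $0\le c<\tfrac14$ case of \cite{bcm}, because of the spiraling convergence to $\alpha$ noted after Proposition \ref{propostion2}.
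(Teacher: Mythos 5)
Your reduction is the same as the paper's: using Proposition \ref{propinf} to exclude $S$, Proposition \ref{propostion2} to place $\mathcal{R}\setminus\{p,f(p),f^2(p)\}$ in $W^s(\alpha)$, and Proposition \ref{proposition3} to exclude orbits trapped in $A\cup H_2$, you correctly conclude that any remaining point of $\mathcal{K}^+$ has an orbit that is eventually confined to a cyclic chain of blocks around the $3$-cycle (the paper identifies this chain explicitly as $G'B'D'F'C'E'$ after first funnelling $A',H_1',H_2'$ into $B'$ via Lemmas \ref{propa2} and \ref{proph1}). The reverse inclusion is also handled the same way. Up to that point the proposal matches the paper.

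The gap is in your final step. You invoke the stable manifold theorem for the saddle $3$-cycle to conclude that every bounded orbit trapped in the block-cycle lies in $W^s(p)\cup W^s(f(p))\cup W^s(f^2(p))$. That theorem is local: it characterizes orbits that remain in a sufficiently small neighborhood of the hyperbolic cycle. The blocks here are not such neighborhoods --- $F=[0,1+c]\times\left]-\infty,-1\right]$, $C$, $D$, $E$ and $G$ are unbounded --- so a bounded orbit confined to the chain could a priori accumulate on some other invariant set inside it, or oscillate without converging; you name exactly this as ``the main obstacle I anticipate'' but do not resolve it, and no linearizing chart covers the region where the orbit is known to live. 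The paper closes this gap by a direct monotonicity argument: for $z=(x_0,y_0)\in F'$ it derives the explicit coordinate inequalities (the relations (\ref{17})--(\ref{21})) showing that $(x_{6n})_n$ and $(y_{6n})_n$ are bounded increasing sequences, hence convergent; the limit is then a fixed point of $f^6$ in $\overline{F'}$, and a second application of the same inequalities shows the only such point is $f(p)$. Some substitute for this global convergence argument (and, separately, a verification that the $3$-cycle is actually hyperbolic for all $-1<c<0$, which your route also requires but the paper never needs) is necessary before your proof is complete.
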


In the proof of Proposition \ref{proposition4}, we will consider the points in $\mathcal{K}^+$ (therefore, the orbits never enter in $S$) whose the orbits never enter in $\mathcal{R}\setminus \{p,f(p),f^2(p)\}$ and whose the orbit is not contained in the set $A\cup H_2$. This proof will be given in Subsection \ref{subsecwsp}.

\subsection{Proof of Proposition \ref{propostion2}} \label{subsecr0r1r2r3}
Let $Y=[c,0]\times[c,0]$ and note that $f(Y)\subset [c,c^2+c]\times[c,0]\subset Y$. In order to prove Proposition \ref{propostion2}, we will prove that if $z\in\mathcal{R}\setminus\{p,f(p),f^2(p)\}$, then there exists $N\in\mathbb{N}$ such that $f^n(z)\in Y$,  for all $n\geq N$.
For that, we need the following.

\begin{lemma}
	The following properties are valid:
	
	\noindent $1.$ $f(R_0)\subset R_0\cup R_1$.
	
	\noindent $2.$ $f(R_1)\subset R_2$.
	
	\noindent $3.$  $f(R_2)\subset R_2\cup R_3$.
	
	\noindent $4.$ $f(R_3)\subset R_1$. \label{lema3}
\end{lemma}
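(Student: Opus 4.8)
The plan is to verify each of the four inclusions directly, by applying $f(x,y)=(xy+c,x)$ to a generic point of the relevant rectangle and estimating the two coordinates of the image separately. Throughout I will use that $-1<c<0$ forces $0<1+c<1$ and $1<a_2<\frac{1+\sqrt5}{2}$, together with the quadratic identity $a_2^2-a_2+c=0$ satisfied by the fixed point.

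I would first dispatch items $2$ and $4$, where the image lands in a single rectangle. If $(x,y)\in R_1$, then $x\in[-1,0]$ gives the required second coordinate, while $xy\le 0$ (opposite signs) yields $xy+c\le c<0$ and $xy\ge-(1+c)$ yields $xy+c\ge-1$; hence $f(x,y)\in[-1,0]\times[-1,0]=R_2$. The same two-sided estimate, with the factors interchanged, handles item $4$: for $(x,y)\in R_3$ one has $xy\le 0$ and $xy\ge -(1+c)$, so $xy+c\in[-1,c]\subset[-1,0]$, while $x\in[0,1+c]$, giving $f(x,y)\in[-1,0]\times[0,1+c]=R_1$.

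For items $1$ and $3$ the image may land in either of two rectangles, so I split on the sign of the first coordinate $xy+c$. For $(x,y)\in R_2$ one has $xy\in[0,1]$, hence $xy+c\in[c,1+c]$; since the second coordinate $x$ lies in $[-1,0]$, the image lies in $R_3$ when $xy+c\ge 0$ and in $R_2$ when $xy+c<0$, proving item $3$. For $(x,y)\in R_0$ the second coordinate is $x\in[0,1+c]\subset[0,a_2]$ (correct for both $R_0$ and $R_1$), and $xy\ge 0$ gives $xy+c\ge c\ge -1$; the image then lies in $R_0$ or $R_1$ according to the sign of $xy+c$, provided I also show $xy+c\le 1+c$.

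The only genuine obstacle is this last upper bound, which, since $x\le 1+c$ and $y\le a_2$, reduces to $(1+c)a_2\le 1$. I would prove it by observing that for $-1<c<0$ both sides of $\sqrt{1-4c}\le\frac{1-c}{1+c}$ are positive, so squaring is legitimate and the inequality becomes $(1-4c)(1+c)^2\le(1-c)^2$; expanding, this simplifies to $0\le 4c^2(2+c)$, which holds because $c>-1>-2$. This finishes item $1$ and hence the lemma.
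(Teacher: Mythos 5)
Your proof is correct and follows essentially the same route as the paper: compute the image of each rectangle coordinate-by-coordinate and check containment, with the only substantive point being the bound $a_2(1+c)+c\le 1+c$ (equivalently $(1+c)a_2\le 1$), which the paper states as equivalent to $c\ge -2$ and you verify explicitly by squaring. Your added algebra for that inequality is a welcome detail but does not change the argument.
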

\begin{proof} $1.$ We have that $f(R_0)\subset [c,a_2(1+c)+c]\times [0,1+c]$. On the other hand, $a_2(1+c)+c\leq 1+c$ if and only if $c\geq-2$. Since $-1<c<0$, it follows that $f(R_0)\subset [c,1+c]\times[0,1+c]\subset R_0\cup R_1$.
	
	\noindent $2.$ $f(R_1)\subset [-1,c]\times [-1,0]\subset R_2$.
	
	\noindent $3.$ $f(R_2)\subset [c,1+c]\times[-1,0]\subset R_2\cup R_3$.
	
	\noindent $4.$ $f(R_3)\subset [-1,c]\times[0,1+c]\subset R_1$.
\end{proof}

\begin{remark} \label{obsrlinha} 
From Lemma \ref{lema3}, we deduce that $f(\mathcal{R}')\subset \mathcal{R}'$ and $\mathcal{R}=\mathcal{R}'\cup R_0\subset \mathcal{K}^+$.
\end{remark}

\begin{lemma}
There exists $N\in\mathbb{N}$ such that $f^n(\mathcal{R})\subset \mathcal{R}'$, for all $n\geq N$. \label{lema33}
\end{lemma}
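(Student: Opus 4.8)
The plan is to reduce the statement to a one‑dimensional estimate on the first coordinates of orbits that stay inside $R_0$. By Lemma \ref{lema3} we already know that $f(\mathcal{R}')\subseteq\mathcal{R}'$ (Remark \ref{obsrlinha}) and $f(R_0)\subseteq R_0\cup R_1$. Hence, if $z\in R_0$ and $k\geq 1$ is the first time at which $f^k(z)\notin R_0$, then $f^{k-1}(z)\in R_0$ forces $f^{k}(z)\in f(R_0)\subseteq R_0\cup R_1$, so $f^k(z)\in R_1\subseteq\mathcal{R}'$, and from then on the orbit remains in $\mathcal{R}'$. Therefore it is enough to exhibit one integer $N$ with the property that \emph{no} point of $R_0$ can have $f^0(z),f^1(z),\dots,f^N(z)$ all lying in $R_0$: for such an $N$ one gets $f^N(R_0)\subseteq\mathcal{R}'$, and since $f(\mathcal{R}')\subseteq\mathcal{R}'$ this gives $f^n(\mathcal{R})=f^n(\mathcal{R}')\cup f^n(R_0)\subseteq\mathcal{R}'$ for every $n\geq N$.

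To produce such an $N$ I would fix $z=(x_0,y_0)\in R_0$, assume $f^k(z)=(x_k,y_k)\in R_0$ for $k=0,\dots,n$, and read off the recursion from $f(x,y)=(xy+c,x)$: one has $y_k=x_{k-1}$ and $x_{k+1}=x_k x_{k-1}+c$ for $1\leq k\leq n-1$, while membership in $R_0=[0,1+c]\times[0,a_2]$ gives $0\leq x_k\leq 1+c<1$ for $0\leq k\leq n$ (note $1+c<1$ because $c<0$). The key observation is that $x_k<1$ forces $x_kx_{k-1}\leq x_{k-1}$, so
$$x_{k+1}=x_kx_{k-1}+c\leq x_{k-1}+c=x_{k-1}-|c|\qquad (1\leq k\leq n-1).$$
Iterating this inequality along the even‑ and odd‑indexed subsequences, and using $x_0\leq 1$ and $x_1\leq 1+c\leq 1$, yields $x_k\leq 1-\lfloor k/2\rfloor\,|c|$ for $0\leq k\leq n$; since $x_k\geq 0$ this forces $\lfloor k/2\rfloor\leq 1/|c|$, hence $n\leq 2/|c|+1$. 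Consequently any integer $N>2/|c|+1$ works, which combined with the first paragraph proves the lemma.

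I do not expect a real obstacle here. The two points that require a little care are: (i) the interplay between ``leaving $R_0$'' and ``entering $\mathcal{R}'$'', which is precisely what Lemma \ref{lema3}(1) supplies together with the forward invariance of $\mathcal{R}'$ from Remark \ref{obsrlinha}; and (ii) the bookkeeping of index ranges, since the recursion $x_{k+1}=x_kx_{k-1}+c$ and the bound $x_k\leq 1$ are only available while the orbit is still in $R_0$, and one uses $x_1\leq 1+c$, which is immediate from $f(z)\in R_0$. (If one only wants existence of $N$ without an explicit value, an alternative is to first prove pointwise that no orbit stays in $R_0$ forever and then upgrade to a uniform $N$ by compactness of $R_0$ applied to the nested closed sets $\bigcap_{k=0}^{n}f^{-k}(R_0)$; but the estimate above gives the uniform statement directly.)
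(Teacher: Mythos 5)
Your proof is correct, and it reaches the conclusion by a genuinely different estimate than the paper. The reduction in your first paragraph (first exit from $R_0$ lands in $R_1\subset\mathcal{R}'$ by Lemma \ref{lema3}, then forward invariance of $\mathcal{R}'$ from Remark \ref{obsrlinha} takes over) is the same skeleton the paper uses implicitly, but the core quantitative step differs. The paper tracks the whole set $f^n(R_0)\cap R_0$ and shows it is contained in a rectangle $[0,(1+c)^{F_{n-1}}+c]\times[0,(1+c)^{F_{n-2}}]$ with Fibonacci exponents, so that the upper endpoint eventually drops below $0$ and the set empties; this is a multiplicative argument exploiting $0<1+c<1$. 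You instead work along a single orbit and observe the additive decrement $x_{k+1}=x_kx_{k-1}+c\leq x_{k-1}-|c|$ (valid because $0\leq x_k\leq 1+c<1$ and $x_{k-1}\geq 0$ while the orbit stays in $R_0$), so the even- and odd-indexed first coordinates decrease by at least $|c|$ per step and nonnegativity forces the orbit out of $R_0$ within roughly $2/|c|$ iterates, uniformly in the starting point. Your version is more elementary and gives an explicit, cleaner bound on $N$ (of order $1/|c|$ rather than the Fibonacci-logarithmic one implicit in the paper); the paper's version has the mild advantage of producing an explicit shrinking rectangle containing $f^n(R_0)\cap R_0$, which is in the spirit of the set-valued filtration estimates used elsewhere in the article. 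Both are complete; only the off-by-one constants in your final bound would need a line of care if you wanted the sharp $N$, but existence of $N$ is all the lemma asks.
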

\begin{proof} From Lemma \ref{lema3}, we have $f^n(R_0)\setminus \mathcal{R}'=f^n(R_0)\cap R_0$, for all $n\in\mathbb{N}$. Observe that
	$$
	\begin{array}{rcl}
f(R_0)\setminus \mathcal{R}' & \subset & [0,1+c]\times[0,1+c], \\
f^2(R_0)\setminus \mathcal{R}' & \subset & [0,(1+c)^2+c]\times[0,1+c], \\
f^3(R_0)\setminus \mathcal{R}' & \subset & [0,(1+c)^3+c(1+c)+c]\times[0,(1+c)^2+c] \\
                     & \subset & [0,(1+c)^3+c]\times[0,(1+c)^2], \\
f^4(R_0)\setminus \mathcal{R}' & \subset & [0,(1+c)^5+c(1+c)^2+c]\times[0,(1+c)^3+c] \\
                     & \subset & [0,(1+c)^5+c]\times[0,(1+c)^3]. \\
	\end{array}
	$$
	
	Continuing in this way, we have that
	
	\begin{center}
		$f^n(R_0)\setminus \mathcal{R}' \subset [0,(1+c)^{F_{n-1}}+c]\times[0,(1+c)^{F_{n-2}}]$, for all $n\geq 4$,
	\end{center}
	where $(F_n)_{n\geq 0}$ is the Fibonacci sequence defined by $F_0=1$, $F_1=2$ and $F_n=F_{n-1}+F_{n-2}$, for all $n\geq 2$.

	Since $0<1+c<1$, it follows that there exists $N\in \mathbb{N}$ such that $(1+c)^{F_{N-1}}+c<0$, i.e. $f^N(R_0)\setminus \mathcal{R}'=\emptyset$, which implies that $f^N(R_0)\subset \mathcal{R}'$. Thus, $f^n(\mathcal{R})\subset\mathcal{R}'$, for all $n\geq N$, from Remark \ref{obsrlinha}.
\end{proof}

Now, we will prove that if $z\in \mathcal{R}'\setminus\{p,f(p),f^2(p)\}$, then there exists $N\in\mathbb{N}$ such that $f^n(z)\in Y$, for all $n\geq N$. For this, according to Lemmas \ref{lema3} and \ref{lema33} it suffices to consider $z\in R_2\setminus\{p\}$.

\begin{lemma}
	If $z\in R_2\setminus \{p\}$, then there exists $N\in\mathbb{N}$ such that $f^n(z)\in Y$, for all $n\geq N$. \label{prop66}
\end{lemma}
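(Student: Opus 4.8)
The plan is to show that, for every $z\in R_2\setminus\{p\}$, the orbit of $z$ eventually falls into $Y$; since $f(Y)\subset Y$, this is precisely the assertion. Writing $f^n(z)=(x_n,x_{n-1})$ with $x_0,x_{-1}\in[-1,0]$, the orbit is encoded by the scalar recurrence $x_{n+1}=x_nx_{n-1}+c$. By Lemma~\ref{lema3} the orbit stays in $\mathcal R'=R_1\cup R_2\cup R_3$ for all time, and the only transitions are $R_1\to R_2$, $R_3\to R_1$, and $R_2\to R_2$ or $R_2\to R_3$; the refined inclusions of Lemma~\ref{lema3} will also be used to control the signs of the $x_n$ along an excursion.

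The first remark is that if three consecutive iterates lie in $R_2$, then the third lies in $Y$: from $x_{n-1},x_n,x_{n+1},x_{n+2}\in[-1,0]$ one gets $x_nx_{n-1}\ge0$ and $x_{n+1}x_n\ge0$, hence $x_{n+1},x_{n+2}\ge c$, i.e. $(x_{n+2},x_{n+1})\in[c,0]^2=Y$. A similar but longer sign count along an excursion $R_2\to R_3\to R_1$ shows that the $R_2$-state reached two steps after leaving $R_3$ has both coordinates in $[-1,c]$, and that any block $R_3R_1R_2R_2$ in the itinerary is necessarily followed by one more $R_2$, so it too pushes the orbit into $Y$. Hence, if the orbit of $z$ never meets $Y$, it must visit $R_3$ infinitely often (otherwise it is eventually trapped in $R_2$, contradicting the first remark), and from its first visit to $R_3$ onward its itinerary is forced to be the periodic word $(R_2R_3R_1)^{\infty}$ — which is exactly the itinerary of the $3$-cycle $p\to f(p)\to f^2(p)$.

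What remains is to show that $p$ is the only point with this itinerary. I would list the times $n_j=n_0+3j$ at which the orbit lies in $R_2$ and set $a_j=|x_{n_j}|$, $b_j=|x_{n_j-1}|$, so $a_j,b_j\in[|c|,1]$ for $j\ge1$. A period-by-period sign analysis produces the closed recursion $a_{j+1}=w_j^2a_j+|c|w_j+|c|$, $b_{j+1}=w_ja_j+|c|$, with $w_j=a_jb_j-|c|=x_{n_j+1}$, and the existence of the $R_3$-step forces $w_j\ge0$, i.e. $\rho_j:=a_jb_j\ge|c|$, for all $j\ge1$ (if $\rho_j<|c|$ the next iterate is negative, so the itinerary is not $(R_2R_3R_1)^{\infty}$). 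Treating $w_j$ as a parameter, $\rho_{j+1}-\rho_j=(w_j^2a_j+|c|w_j+|c|)(w_ja_j+|c|)-(w_j+|c|)$ is nondecreasing in $a_j$, so bounding $a_j\le1$ gives
\begin{equation*}
\rho_{j+1}-\rho_j\ \le\ (w_j+1)(w_j+|c|)\bigl(w_j-(1-|c|)\bigr)\ =\ (\rho_j+1-|c|)\,\rho_j\,(\rho_j-1)\ \le\ 0 .
\end{equation*}
If $\rho_1<1$, then $\rho_j\le\rho_1<1$ for every $j$, and since $\rho_j\ge|c|$ the right-hand side above is $\le-|c|(1-\rho_1)<0$, so $\rho_j\to-\infty$, which is impossible. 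Hence $\rho_1=1$, so $a_1=b_1=1$, i.e. $x_{n_1}=x_{n_1-1}=-1$ and $f^{n_1}(z)=p$. Since the backward orbit of $p$ stays inside $\{p,f(p),f^2(p)\}$ and $p$ is the only one of these three points lying in $R_2$, it follows that $z=p$, contradicting $z\in R_2\setminus\{p\}$.

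The delicate part is the middle one: carrying out the sign bookkeeping along excursions carefully enough to reduce the $Y$-avoiding orbits to the single itinerary $(R_2R_3R_1)^{\infty}$, and then exhibiting the monotone quantity $\rho_j$ whose forced lower bound $|c|$ (needed for the excursions to keep recurring) clashes with its strict decrease. The algebraic core is the estimate $\rho_{j+1}-\rho_j\le(\rho_j+1-|c|)\rho_j(\rho_j-1)$, equivalently the statement that the $3$-cycle is the only orbit trapped by that itinerary.
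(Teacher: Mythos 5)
Your proof is correct. The first half — forcing any $Y$-avoiding orbit in $R_2$ into the periodic itinerary $R_2R_3R_1R_2R_3R_1\ldots$ by showing that repeated $R_2$-visits push the orbit into $Y$ — is essentially the paper's reduction (the paper gets there a step faster: already \emph{two} consecutive $R_2$-states give $f(z)\in[c,0]\times[-1,0]$ and hence $f^2(z)\in Y$, so no separate treatment of the block $R_3R_1R_2R_2$ is needed). The second half is genuinely different. The paper excludes the periodic itinerary for $z\neq p$ by tracking a lower bound $z_k$ for $\min\{x_{3k},y_{3k}\}$ through the cubic $g(x)=x(x^2+c)+c$: the sequence $(z_k)$ is increasing and bounded above by $-\sqrt{-c}$, hence converges to a fixed point of $g$ in $\{-1,a_1,a_2\}$, a contradiction; this needs the auxiliary threshold $-\sqrt{-c}$ (below which the orbit falls into the easy Case 1) and separate boundary cases $x=-1$ or $y=-1$. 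You instead use the product $\rho_j=|x_{n_j}|\,|x_{n_j-1}|=x_{n_j+1}+|c|$ at the $R_2$-return times as a monotone quantity, with the factorization $\rho_{j+1}-\rho_j\le(\rho_j+1-|c|)\rho_j(\rho_j-1)\le 0$, so that either $\rho_1=1$ (forcing $f^{n_1}(z)=p$) or $\rho_j$ decreases by a uniform amount and violates $\rho_j\ge|c|$. I checked the recursion $a_{j+1}=w_j^2a_j+|c|w_j+|c|$, $b_{j+1}=w_ja_j+|c|$ and the monotonicity in $a_j$; they are right. Your route buys a uniform argument (no threshold, no boundary cases, and a quantitative rate) at the cost of heavier bookkeeping and of one asserted-but-unproved fact — that the only preimages of $p$ under iterates of $f$ are $p,f(p),f^2(p)$ — which does hold (and should be stated) because the second coordinate of $f(x,y)$ is $x$, so any point whose second coordinate is nonzero has a unique preimage.
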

\begin{proof}
If $z\in Y$, there is nothing to prove. Let $z=(x,y)\in R_2\setminus Y\cup \{p\}$.

\smallskip

\noindent \textbf{Claim:} If $z,f(z)\in R_2$, then $f^n(z)\in Y$, for all $n\geq 2$.

In fact, let $z\in R_2$ such that $f(z)\in R_2$. Hence,
\begin{center}
	$f(z)\in f(R_2)\cap R_2\subset [c,1+c]\times[-1,0]\cap R_2\subset [c,0]\times [-1,0]$.
\end{center}

Thus, $f^2(z)\in f([c,0]\times [-1,0])\subset [c,0]\times[c,0]=Y$ and since $f(Y)\subset Y$, it follows that $f^n(z)\in Y$, for all $n\geq 2$ and the proof of the
 	Claim is complete.

\smallskip

For the proof of Lemma \ref{prop66}. we need to consider two cases:

\textbf{Case 1:} $z\in ([-1,c]\times [c,0])\cup ([c,0]\times[-1,c])$.

\noindent Since $f([-1,c]\times [c,0])\subset [c,0]\times[-1,c]\subset R_2$ and $f([c,0]\times[-1,c])\subset [c,0]\times[c,0]\subset R_2$, it follows from the Claim that
$f^n(z)\in Y$, for all $n\geq 2$.

\textbf{Case 2:} $z\in[-1,c[\times[-1,c[\setminus \{p\}$.

\noindent \textbf{Case 2.1:} $\min\{x,y\}>-1$.

Suppose for each $N\in\mathbb{N}$, $f^N(z)\notin Y$. Thus, by Claim and Lemma \ref{lema3}, we have
\begin{equation}
f^{3n}(x,y)\in R_2, \textrm{ } f^{3n+1}(x,y)\in R_3 \textrm{ and } f^{3n+2}(x,y)\in R_1, \label{1}
\end{equation}
for all integer $n\in\mathbb{N}$.

Let $(x_n,y_n)=f^n(x,y)$. Observe that if there exists $n\in\mathbb{N}$ such that $(x_{3n},y_{3n})\in [-\sqrt{-c},c]\times[-\sqrt{-c},c]$, then $(x_{3n+1},y_{3n+1})\in [c^2+c,0]\times[-\sqrt{-c},c]$ $\subset [c,0]\times[-1,c]$, and from Case 1 it follows that $(x_{3n+k},y_{3n+k})\in Y$, for all $k\geq 3$.

Therefore, we need to suppose that 
\begin{equation}
\min\{x_{3n},y_{3n}\}<-\sqrt{-c}, \textrm{ for all } n\in\mathbb{N}. \label{2}
\end{equation}
Let $z_0=\min\{x,y\}$. Thus, from relation (\ref{2}), we have $z_0<-\sqrt{-c}$, which implies that $z_0^2+c>0$. Hence,
$(x,y)\in[z_0,c]\times[z_0,c]$, $(x_1,y_1)\in[c^2+c,z_0^2+c]\times[z_0,c]$ and from relation (\ref{1}), it follows that

\begin{center}
	$(x_1,y_1)\in[0,z_0^2+c]\times[z_0,c]$, $(x_2,y_2)\in[z_0(z_0^2+c)+c,c]\times[0,z_0^2+c]$ and $(x_3,y_3)\in[(z_0(z_0^2+c)+c)(z_0^2+c)+c,c]\times[z_0(z_0^2+c)+c,c]$.
\end{center}

Since $-1<z_0<c<a_1<0$, it is not hard to prove that $z_0(z_0^2+c)+c\leq (z_0(z_0^2+c)+c)(z_0^2+c)+c$, i.e. $(x_3,y_3)\in[z_0(z_0^2+c)+c,c]\times[z_0(z_0^2+c)+c,c]$.

Let  $g:\mathbb{R}\longrightarrow\mathbb{R}$ defined by $g(x)=x(x^2+c)+c$ and define $z_{n+1}=g(z_n)$, for all $n\in\mathbb{N}$.

It is also easy to check that $g(z_0)\geq z_0$. Therefore, we have that $z_1=g(z_0)\geq z_0$ and by definition of $g$, it follows that $(x_3,y_3)\in[z_1,c]\times[z_1,c]$.

In the same way, we can prove that $(x_{3k},y_{3k})\in[z_k,c]\times[z_k,c]$, with $-1<z_0<z_1<\ldots<z_k<-\sqrt{-c}$ and $\min\{x_{3k},y_{3k}\}<-\sqrt{-c}$, for all $k\in\mathbb{N}$. Hence, $(z_n)_n$ is a bounded increasing sequence. Therefore, it admits limit $l$. We thus have $g(l)=l$ with $l\in\{-1,a_1,a_2\}$ which is impossible, since $-1<z_0<z_1<\ldots<z_k<\ldots<l\leq -\sqrt{-c}<c<a_1<a_2$. Therefore, there exists $N\in\mathbb{N}$ such that $f^N(z)\in Y$.

\noindent \textbf{Case 2.2:} $x=-1$ and $-1<y<c$.

We have $f(x,y)=(-y+c,-1)\in R_3$, $f^2(x,y)=(y,-y+c)\in R_1$ and $f^3(x,y)=(-y^2+cy+c,y)\in R_2$. Let $(x_0,y_0)=f^3(x,y)$. Thus, $(x_0,y_0)\in R_2$, with $x_0>-1$ and $y_0>-1$ and by Case $2.1$ we are done.

\noindent \textbf{Case 2.3:} $-1<x<c$ and $y=-1$.

We have $f(x,y)=(-x+c,x)\in R_3$, $f^2(x,y)=(-x^2+cx+c,-x+c)\in R_1$ and $f^3(x,y)=((-x^2+cx+c)(-x+c)+c, -x^2+cx+c)\in R_2$. Let $(x_0,y_0)=f^3(x,y)$. Thus, $(x_0,y_0)\in R_2$, with $x_0>-1$ and $y_0>-1$ and by Case $2.1$ we are done.
\end{proof}

Now, to complete the proof of Proposition \ref{propostion2}, we will prove that if $z\in Y$, then $\displaystyle\lim_{n\to\infty}f^n(z)=\alpha$. For this, we need to consider the following filtration of $Y$: let $R=[a_1,0]\times[c,a_1]$, $S=[a_1,0]\times[a_1,0]$, $T=[c,a_1]\times[a_1,0]$ and $U=[c,a_1]\times[c,a_1]$ (see Figure \ref{figrstuy}).
\begin{figure}[h!]
	\centering
	\includegraphics[scale=0.5]{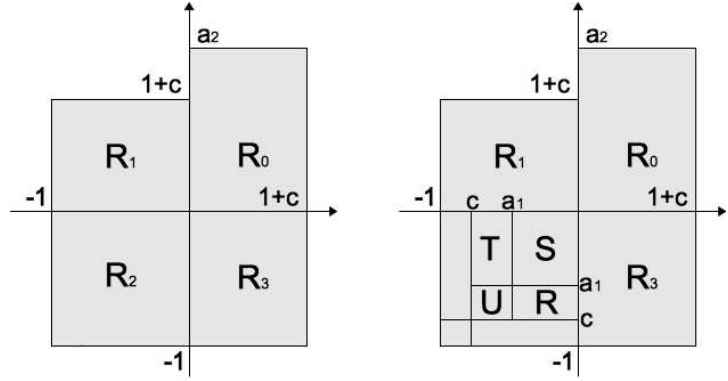}
	\caption{Rectangles $R$, $S$, $T$, $U$.} \label{figrstuy}
\end{figure}

\begin{lemma}
	The following results are valid:
	
	\noindent $1.$ $f(R)\subset S\cup T$.
	
	\noindent $2.$ $f(S)\subset T$.
	
	\noindent $3.$ $f(T)\subset U\cup R$.
	
	\noindent $4.$ $f(U)\subset R$. \label{lema4}
\end{lemma}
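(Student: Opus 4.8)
The plan is to verify each of the four inclusions by the same elementary method already used throughout Section \ref{secaokmais}: take a generic point $(x,y)$ in the given rectangle, compute $f(x,y)=(xy+c,x)$, and bound the two coordinates of the image using the explicit numerical ranges of $x$ and $y$, keeping in mind the standing hypothesis $-1<c<0$ and the fact $c<a_1<0$ (where $a_1=\frac{1-\sqrt{1-4c}}{2}$). Since the second coordinate of $f(x,y)$ is just $x$, in every case that coordinate lands in exactly the interval that $x$ already ranges over, so the real work is always to pin down $xy+c$.

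First I would do item $2$, $f(S)\subset T$, since $S=[a_1,0]\times[a_1,0]$ is the cleanest case: for $(x,y)\in S$ both $x,y\in[a_1,0]$, so $xy\in[0,a_1^2]$ and $xy+c\in[c,a_1^2+c]$; one then checks $a_1^2+c\le a_1$, which holds because $a_1$ is a root of $t^2-t+c=0$, giving $a_1^2+c=a_1$ exactly. Hence the first coordinate lies in $[c,a_1]$ and the second in $[a_1,0]$, i.e. $f(S)\subset[c,a_1]\times[a_1,0]=T$. Next, item $4$, $f(U)\subset R$: for $(x,y)\in U=[c,a_1]\times[c,a_1]$, both coordinates are negative, so $xy\in[0,c^2]$ (using $|x|,|y|\le -c$, so $xy\le c^2$, and $xy\ge 0$), hence $xy+c\in[c,c^2+c]\subset[a_1,0]$ — here $c^2+c\le 0$ since $-1<c<0$, and $c\ge a_1$ by $(d)$-type reasoning since $a_1<c$. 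Wait: one must check $c\ge a_1$, equivalently $a_1\le c$, which is true because $a_1=\frac{1-\sqrt{1-4c}}{2}$ and a short computation gives $a_1<c$ for $-1<c<0$. So the first coordinate of $f(U)$ lies in $[a_1,0]$ and the second in $[c,a_1]$, i.e. $f(U)\subset[a_1,0]\times[c,a_1]=R$.

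For items $1$ and $3$ the image straddles two rectangles, so I would split according to the sign of the product. For item $1$, $(x,y)\in R=[a_1,0]\times[c,a_1]$: here $x\in[a_1,0]$ and $y\in[c,a_1]$, so $xy\in[0,a_1 c]$ when... more carefully, $x\le 0$ and $y\le a_1<0$, so $xy\ge 0$; and $xy\le (-a_1)(-c)=a_1 c\le a_1^2$ using $|y|\le -c$ and $|c|$ versus $|a_1|$. Then $xy+c\in[c,a_1^2+c]=[c,a_1]$, so the first coordinate is in $[c,a_1]$ while the second, $x$, is in $[a_1,0]$; thus $f(R)\subset[c,a_1]\times[a_1,0]\cup([a_1,0]\times[a_1,0])$, and one checks this is contained in $S\cup T$. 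Similarly for item $3$, $(x,y)\in T=[c,a_1]\times[a_1,0]$: $x\in[c,a_1]$, $y\in[a_1,0]$, both $\le 0$, so $xy\ge 0$ and $xy\le a_1^2$, giving $xy+c\in[c,a_1]$, hence $f(T)\subset[c,a_1]\times[c,a_1]\cup([a_1,0]\times[c,a_1])=U\cup R$.

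The only mild subtlety — the part I would be most careful about — is getting the inequalities between the constants $c$, $a_1$, $a_1^2+c$, $c^2+c$ and the various products exactly right, since several of them are equalities (notably $a_1^2+c=a_1$) that make the inclusions tight; a sign error or a reversed comparison between $|c|$ and $|a_1|$ would break a boundary case. Everything else is routine interval arithmetic of the same flavor as Lemma \ref{lema3} and Lemma \ref{prop66}, and no deeper idea is needed.
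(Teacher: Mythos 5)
Your overall strategy (coordinatewise interval arithmetic, the same as the paper's) is the right one, and your item $2$ is correct, but items $1$, $3$ and $4$ all lean on a false inequality. You repeatedly assume $a_1\le c$, i.e.\ $|a_1|\ge|c|$; in fact for $-1<c<0$ one has $c<a_1<0$ (so $|a_1|<|c|$): from $a_1=\frac{1-\sqrt{1-4c}}{2}$, the inequality $c<a_1$ is equivalent to $1-2c>\sqrt{1-4c}$, i.e.\ $4c^2>0$, which always holds. (Numerically, for $c=-0.5$ one gets $a_1\approx-0.366>c$.) This is precisely the chain the paper's own proof records as ``$-1<c<a_1<0$''.

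Concretely: in item $4$ you bound $xy\ge 0$ on $U=[c,a_1]^2$ and then need $[c,c^2+c]\subset[a_1,0]$, which fails because $c<a_1$; the correct argument uses the tight bound $xy\ge a_1^2$ (since $|x|,|y|\ge|a_1|$), giving $xy+c\ge a_1^2+c=a_1$. In items $1$ and $3$ your upper bound ``$xy\le a_1c\le a_1^2$'' is reversed: since $|c|>|a_1|$ one has $a_1c>a_1^2$, so $xy+c$ can exceed $a_1$ --- for instance $f(a_1,c)=(a_1c+c,a_1)$ has first coordinate $a_1c+c>a_1$ and lands in $S$, not $T$ --- and your intermediate claim that the first coordinate lies in $[c,a_1]$ would (wrongly) yield $f(R)\subset T$ and $f(T)\subset U$, contradicting the unions you then write down. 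What actually saves items $1$ and $3$ is only the weaker fact $a_1c+c<0$ (indeed $a_1c+c\le c^2+c<0$), which places the first coordinate in $[c,0]=[c,a_1]\cup[a_1,0]$ and hence the image in $T\cup S$ (resp.\ $U\cup R$), exactly as in the paper. So all four inclusions are true and your method is the intended one, but as written the justifications of items $1$, $3$ and $4$ are broken by the sign comparison between $|c|$ and $|a_1|$.
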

\begin{proof} $1.$ $f(R)\subset [c,ca_1+c]\times [a_1,0]$. Since $-1<c<a_1<0$, it follows that $ca_1+c<0$ and $ca_1+c\leq c^2+c<0$. Thus, $f(R)\subset S\cup T$.
	
	\noindent $2.$ $f(S)\subset [c,a_1]\times [a_1,0]= T$.
	
	\noindent $3.$ $f(T)\subset [c,ca_1+c]\times[c,a_1]\subset [c,0]\times[c,a_1]= U\cup R$.
	
	\noindent $4.$ $f(U)\subset [a_1,c^2+c]\times[c,a_1]\subset [a_1,0]\times[c,a_1]= R$.
\end{proof}

\begin{lemma}
	Let $-\frac{3}{4}< c<0$. If $z\in Y$, then $\displaystyle\lim_{n\to+\infty}f^n(z)=(a_1,a_1)$. \label{lemaazia}
\end{lemma}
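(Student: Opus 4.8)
The plan is to combine the cyclic structure of the sub-filtration $R,S,T,U$ of $Y$ with the spiralling attraction at $\alpha$. Writing $f^n(z)=(x_n,y_n)$ and $v_n=x_n-a_1$, the identity $y_{n+1}=x_n$ gives $y_n-a_1=v_{n-1}$, and since $a_1^2-a_1+c=0$ the orbit is governed by the scalar recursion
\[
v_{n+1}=v_nv_{n-1}+a_1\,(v_n+v_{n-1}).
\]
Its linear part has characteristic equation $\mu^2-a_1\mu-a_1=0$, which for $-1<a_1<0$ has two complex conjugate roots of modulus $\sqrt{-a_1}<1$; this is the eigenvalue computation behind property $(b)$ and behind the fact that near $\alpha$ the orbits spiral inward. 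Fix a neighbourhood $\mathcal V$ of $\alpha$ with $f(\mathcal V)\subset\mathcal V$ on which $f^n\to\alpha$ uniformly. It then suffices to show that every orbit starting in $Y$ enters $\mathcal V$.

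The step that uses the hypothesis is an a priori bound. Since $f(Y)\subset Y\subset[c,0]^2$, for $z\in Y$ every $x_n,y_n$ lies in $[c,0]$, so $|v_n|\le\max(-a_1,\,a_1-c)$; one checks $a_1-c\le -a_1$ for $-2<c<0$ (it amounts to $c(c+2)\le 0$), and $-a_1<\tfrac12$ is precisely the assumption $-\tfrac34<c<0$, whence $|v_n|\le -a_1<\tfrac12$ for all $n$. Next, by Lemma \ref{lema4} the orbit cycles among $R,S,T,U$, and membership in each rectangle fixes the signs of the pair $(v_n,v_{n-1})$: both $\ge 0$ in $S$, both $\le 0$ in $U$, opposite signs in $R$ and in $T$. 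Substituting these sign patterns into the recursion and using $|v_k|\le -a_1$, I would estimate one complete turn of the cycle and show that $M_n:=\max(|v_{n-1}|,|v_n|)$ is contracted by a factor $\kappa=\kappa(c)<1$; hence $M_n\to 0$, $f^n(z)\to\alpha$, and in particular the orbit reaches $\mathcal V$. The finitely many degenerate configurations (an iterate equal to $\alpha$, or lying on a dividing line $x=a_1$ or $y=a_1$) are dealt with by hand.

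The main obstacle is the per-cycle contraction when $c$ is near $-\tfrac34$. The linearized return map has eigenvalue modulus $\sqrt{-a_1}\approx\sqrt{\tfrac12}$, so there is room to contract, but the naive one-step bound $|v_{n+1}|\le |v_nv_{n-1}|+|a_1|(|v_n|+|v_{n-1}|)$ produces a factor that need not be below $1$: the quadratic cross-term $v_nv_{n-1}$ is of the same size as the linear terms. To beat it one must work over a whole period of the $R\to S\to T\to U\to R$ cycle rather than a single step, so that the sign constraints coming from the filtration make the cross-term help (be subtracted) at the decisive stages — or, equivalently, replace the $\ell^\infty$ norm by a quadratic form adapted to the complex eigendirections of $Df(\alpha)$, relative to which the nonlinear perturbation is genuinely of lower order once $|v_n|\le -a_1$ is available. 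Carrying out that cycle-by-cycle bookkeeping is the technical core of the lemma.
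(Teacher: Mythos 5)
Your setup is sound --- the change of variables $v_n=x_n-a_1$, the recursion $v_{n+1}=v_nv_{n-1}+a_1(v_n+v_{n-1})$, the a priori bound $|v_n|\le -a_1<\tfrac12$ (which is indeed where $-\tfrac34<c$ enters for you), and the sign pattern of $(v_n,v_{n-1})$ on $R,S,T,U$ are all correct. But the proof has a genuine gap at exactly the point you flag yourself: the claim that one full turn of the cycle $R\to S\to T\to U\to R$ contracts $M_n=\max(|v_{n-1}|,|v_n|)$ by a factor $\kappa(c)<1$ is asserted, not proved. You correctly observe that the one-step estimate $|v_{n+1}|\le|v_nv_{n-1}|+|a_1|(|v_n|+|v_{n-1}|)$ gives a factor up to $3|a_1|$, which exceeds $1$ for $c$ near $-\tfrac34$, and you then offer two possible remedies (sign bookkeeping over a whole period, or a quadratic Lyapunov form adapted to the complex eigendirections of $Df(\alpha)$) without executing either. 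Since you name this computation ``the technical core of the lemma,'' what you have is a plan for a proof, not a proof; moreover the cycle length is not fixed (Lemma \ref{lema4} allows both $R\to S$ and $R\to T$, both $T\to U$ and $T\to R$), so ``one complete turn'' would itself have to be split into cases before any uniform $\kappa$ could be extracted. The difficulty is not cosmetic: the companion Lemma \ref{lemaaziai} for $-1<c\le-\tfrac34$ requires the much finer decomposition $Z_0,\dots,Z_4$ precisely because near $\alpha$ the naive norms do not contract step by step.

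For comparison, the paper's proof avoids the contraction estimate entirely. It traps the orbit in nested squares: with $c_0=c$, $c_n=c_{n-1}^2+c$, one shows $f^{4n+2}(z)\in[c_{2n},c_{2n+1}]\times[c_{2n},c_{2n+1}]$ with $c_{2n}$ increasing and $c_{2n+1}$ decreasing around $a_1$. The limit $l$ of $(c_{2n})$ must satisfy $(l^2+c)^2+c=l$, whose real roots are $a_1$, $a_2$ and $b_{1,2}=\frac{-1\pm\sqrt{-3-4c}}{2}$; the hypothesis $-\tfrac34<c<0$ is used exactly to make $b_{1,2}$ non-real, forcing $l=a_1$. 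So the paper converts the convergence question into identifying the fixed point of a monotone scalar iteration, whereas you would need to produce a genuinely new uniform contraction estimate to close your route.
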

\begin{proof}
	Since $z\in[c,0]\times[c,0]$, it follows that $f(z)\in [c,c^2+c]\times [c,0]$ and $f^2(z)\in [c,c^2+c]\times [c,c^2+c]$.
	
	Let $(c_n)_{n\geq 0}$ be the sequence defined by $c_0=c$ and $c_n=c_{n-1}^2+c$, for all $n\geq 1$. Thus, $f^2(z)\in [c_0,c_1]\times[c_0,c_1]$.
	
\noindent \textbf{Claim:} $f^{4n+2}(z)\in [c_{2n},c_{2n+1}]\times[c_{2n},c_{2n+1}]$ with $c\leq c_{2n}<c_{2n+2}<a_1<c_{2n+3}<c_{2n+1}<0$, for all $n\geq 0$.

 In fact, it is not hard to prove that $c_0<c_2<a_1<c_3<c_1<0$.
	
Let $k\geq 0$ and by induction, suppose that  $f^{4i+2}(z)\in [c_{2i},c_{2i+1}]\times[c_{2i},c_{2i+1}]$ for all $i\in\{0,\ldots, k\}$, with $c\leq c_{2i}<c_{2i+2}<a_1<c_{2i+3}<c_{2i+1}<0$, for all $i\in\{0,\ldots, k-1\}$.
	
	Hence, since $c\leq c_{2k-2}<c_{2k}<a_1<c_{2k+1}<c_{2k-1}< 0$, by definition of $(c_n)_{n\geq 0}$ it follows that
\begin{center}	
$c<c_{2k}<c_{2k+2}<a_1<c_{2k+3}<c_{2k+1}< 0$
\end{center}
 and since $f^{4k+2}(z)\in [c_{2k},c_{2k+1}]\times[c_{2k},c_{2k+1}]$, it follows that
	$$
	\begin{array}{rcl}
	f^{4k+3}(z) & \in & [c_{2k+2},c_{2k+1}]\times[c_{2k},c_{2k+1}] \textrm{, } \\
	f^{4k+4}(z) & \in & [c_{2k+2},c_{2k}c_{2k+2}+c]\times[c_{2k+2},c_{2k+1}]\subset \\
	            &     & [c_{2k+2},c_{2k+1}]\times[c_{2k+2},c_{2k+1}] \textrm{, }  \\
	f^{4k+5}(z) & \in & [c_{2k+2},c_{2k+3}]\times[c_{2k+2},c_{2k+1}] \textrm{, } \\
	f^{4k+6}(z) & \in & [c_{2k+1}c_{2k+3}+c,c_{2k+3}]\times[c_{2k+2},c_{2k+3}]\subset \\
	            &     & [c_{2k+2},c_{2k+3}]\times[c_{2k+2},c_{2k+3}]\textrm{, }
	\end{array}
	$$
	\noindent i.e. $f^{4(k+1)+2}(z)\in [c_{2(k+1)},c_{2(k+1)+1}]\times[c_{2(k+1)},c_{2(k+1)+1}]$.
	
	Thus, by the claim, since $(c_{2n})_{n\geq 0}$ is a bounded increasing sequence, it follows that $(c_{2n})_{n\geq 0}$ has limit $l\in\mathbb{R}$. Hence,
	
	\begin{center}
		$\displaystyle l=\lim_{n\to\infty}c_{2n+2}=\lim_{n\to\infty}(c_{2n}^2+c)^2+c=(l^2+c)^2+c$.
	\end{center}
	Therefore, $l\in\{a_1,a_2,b_1,b_2\}$, where $b_1=\frac{-1-\sqrt{-3-4c}}{2}$ and $b_2=\frac{-1+\sqrt{-3-4c}}{2}$.
	
	Since $-\frac{3}{4}<c<0$, it follows that $b_1,b_2\in\mathbb{C}\setminus\mathbb{R}$ and $a_2>0$. Therefore $l=a_1$ and we are done.
\end{proof}

Now, we will concentrate to prove the following proposition.

\begin{lemma}
	Let $-1<c\leq -\frac{3}{4}$. If $z\in Y$, then $\displaystyle\lim_{n\to+\infty}f^n(z)=(a_1,a_1)$. \label{lemaaziai}
\end{lemma}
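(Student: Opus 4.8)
The proof of Lemma~\ref{lemaazia} does not survive here, and the reason is instructive: for $-1<c\le-\frac34$ the period‑two points $b_1\le b_2$ of $q(x)=x^2+c$ are real with $b_1\le a_1\le b_2$, and $\{b_1,b_2\}$ is non‑repelling for $q$ (its multiplier is $q'(b_1)q'(b_2)=4(1+c)\le1$), so the corner recursion $c_{n+1}=c_n^2+c$ used there converges to $\{b_1,b_2\}$ — which contains a point $\ne a_1$ once $c<-\frac34$ — and the squares $[c_{2n},c_{2n+1}]^2$ need no longer shrink to $\{\alpha\}$, only to $[b_1,b_2]^2$. My plan is to replace that crude square‑box trapping by one adapted to the finer ``pinwheel'' filtration $Y=R\cup S\cup T\cup U$ of Lemma~\ref{lema4}: there $\alpha=(a_1,a_1)$ is the common vertex of the four blocks, and by Lemma~\ref{lema4} the map rotates them, $R\to S\cup T$, $S\to T$, $T\to U\cup R$, $U\to R$. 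Since inside any block both coordinates of $w-\alpha$ have a fixed sign, $\rho(w):=\|w-\alpha\|_\infty$ is a natural ``amplitude,'' and the goal is to show that each full turn of the pinwheel strictly decreases $\rho$.

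I would first record the identity $|\det Df(x,y)|=|x|\le|c|<1$ on $Y$, so that the area of $f^n(Y)$ tends to $0$; with $f(Y)\subset Y$ this shows $\Lambda:=\bigcap_{n\ge0}f^n(Y)$ is a compact connected fully invariant zero‑area set containing $\alpha$, which will help absorb transient expansion. Next, from the exact images obtained while proving Lemma~\ref{lema4} — $f(R)\subset[c,c(a_1{+}1)]\times[a_1,0]$, $f(S)\subset[c,a_1]\times[a_1,0]$, $f(T)\subset[c,c(a_1{+}1)]\times[c,a_1]$, $f(U)\subset[a_1,c^2{+}c]\times[c,a_1]$ — I would check that an orbit avoiding $\alpha$ passes through the blocks in the cyclic order $R,S,T,U$ (the $S$‑ and $U$‑stages may be skipped, via the shortcuts $R\to T$, $T\to R$) and stays in no single block for more than two consecutive steps. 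Then, composing $f$ along a full loop $R\to S\to T\to U\to R$ and along each shortcut variant (lengths $\ell\in\{2,3,4\}$), I would prove $\rho(f^{\ell}w)\le L_\ell(\rho(w))$ with $L_\ell$ continuous, $L_\ell(0)=0$ and $L_\ell(r)<r$ for every admissible $r>0$ (near $\alpha$ this is clear because $\alpha$ is attracting with multipliers of modulus $\sqrt{-a_1}<1$, by property $(b)$). Since loop lengths are bounded and $\rho$ stays bounded in between, the amplitude along the loop‑return subsequence is decreasing and its limit $r_\infty$ satisfies $L_\ell(r_\infty)=r_\infty$, forcing $r_\infty=0$; hence $\rho(f^nz)\to0$, i.e.\ $f^n(z)\to\alpha$.

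The principal obstacle is the per‑loop decrease $L_\ell(r)<r$ for every admissible $r$. The one‑step sup‑distance to $\alpha$ is \emph{not} contracted by $f$ on $Y$ — the largest singular value of $Df$ already equals $1$ on the coordinate axes, the two‑dimensional shadow of the square‑box corners escaping to $\{b_1,b_2\}$ — so one must genuinely go around a loop and use $|\det Df|\le|c|<1$ to beat the transient expansion; the quadratic inequalities that make this close are where $c\le-\frac34$ enters (for $-\frac34<c<0$ Lemma~\ref{lemaazia} already applies). A possible substitute for a fully quantitative loop estimate is to argue that the zero‑area continuum $\Lambda$ admits no nontrivial $f$‑invariant subset — $\alpha$ being the only fixed point in $Y$, with the rotation of Lemma~\ref{lema4} and the area contraction ruling out anything larger — so that $\Lambda=\{\alpha\}$ and nestedness of the compacts $f^n(Y)$ gives $f^n|_Y\to\alpha$ uniformly; but that route relies on the same delicate use of Lemma~\ref{lema4}.
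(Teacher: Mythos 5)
Your diagnosis of why the square-box argument of Lemma~\ref{lemaazia} breaks down for $c\le-\frac{3}{4}$ (the $2$-cycle $\{b_1,b_2\}$ of $x\mapsto x^2+c$ becoming real) is correct, and your choice of setting --- working with the pinwheel $Y=R\cup S\cup T\cup U$ of Lemma~\ref{lema4} and analyzing the return map to $R$ along loops of length $2$, $3$ or $4$ --- is exactly the paper's. But the proposal stops precisely where the proof has to begin. The entire content of the lemma is the assertion you label ``the principal obstacle,'' namely that the amplitude map along each loop satisfies $L_\ell(r)<r$ for \emph{every} admissible $r>0$, not just near $\alpha$ where linearization applies. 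You do not prove this, and no soft argument will: the paper has to split $R$ into five explicit pieces $Z_0,\dots,Z_4$ and verify, coordinate by coordinate (the two components of $z-\alpha$ decay at different rates, so a single sup-norm amplitude is too coarse), a chain of polynomial inequalities in $a_1$ resting on $a_1^2-a_1+c=0$. Even then the structure is two-staged: Lemma~\ref{lemaazia1} shows the return orbit eventually enters $Z_0$ (itself a monotonicity-plus-no-fixed-point argument for an auxiliary quadratic $h_1$, since the per-loop decrease in $Z_1$ is not uniformly bounded below), and only inside $Z_0$ does Lemma~\ref{lemaazia0} give the clean geometric contraction $f^{n_i}(z)\in R(|c|^i|a_1|^3,|c|^i|a_1|^3)$. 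Your proposed one-line limit argument ($r_\infty=L_\ell(r_\infty)\Rightarrow r_\infty=0$) is the right skeleton, but without the explicit estimates it is a restatement of the lemma, not a proof.

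The fallback via $\Lambda=\bigcap_{n\ge0}f^n(Y)$ does not close the gap either. From $|\det Df|=|x|\le|c|<1$ on $Y$ you do get that $\Lambda$ is a compact connected set of zero area containing $\alpha$, and $f^n(z)\to\alpha$ for all $z\in Y$ would indeed follow from $\Lambda=\{\alpha\}$; but a zero-area continuum can perfectly well support nontrivial invariant dynamics (an invariant arc, a periodic orbit of the return map, an invariant Cantor set), and ``$\alpha$ is the only fixed point in $Y$ plus the blocks rotate'' rules out none of these. As you concede, excluding them requires the same delicate quantitative control of the return map --- which is to say, the missing estimates. So the proposal identifies the correct strategy but leaves the lemma unproved.
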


For the proof of Lemma \ref{lemaaziai}, we need the following.

\begin{definition}
	Let $Z_0, Z_1, Z_2, Z_3,Z_4 \subset R$ be the sets defined by (see Figure \ref{rstuy2})
	$$
	\begin{array}{l}
	Z_0=[a_1,a_1+|a_1|^3]\times [a_1-|a_1|^3,a_1], \\
	Z_1=]a_1+|a_1|^2,0]\times[a_1-|a_1|^2,a_1], \\
	Z_2=[a_1+|a_1|^3,a_1+|a_1|^2]\times[a_1-|a_1|^2,a_1-|a_1|^3], \\
	Z_3=[a_1,a_1+|a_1|^3]\times [a_1-|a_1|^2,a_1-|a_1|^3] \textrm{ and} \\
	Z_4=[a_1+|a_1|^3,a_1+|a_1|^2]\times [a_1-|a_1|^3,a_1].
	\end{array}
	$$
\end{definition}

\begin{figure}[h!]
	\centering
	\includegraphics[scale=0.7]{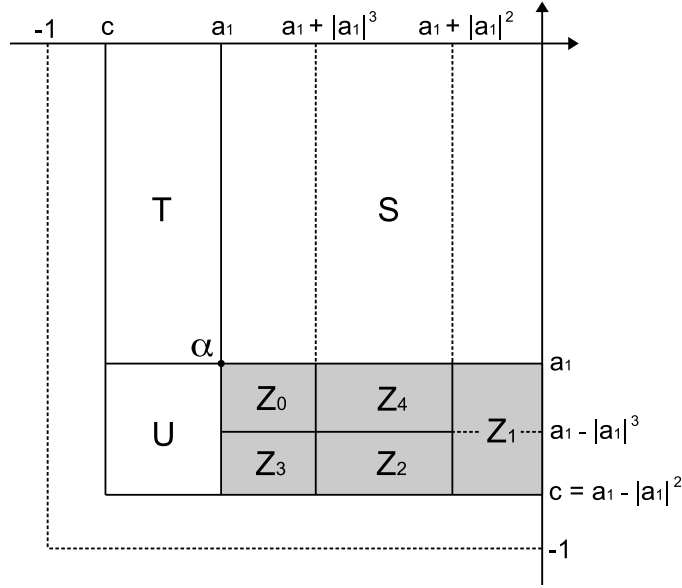}
	\caption{The sets $Z_0$, $Z_1$, $Z_2$, $Z_3$, $Z_4$.} \label{rstuy2}
\end{figure}

The proof of Lemma \ref{lemaaziai} goes as follows. If $z\in Y$, then from Lemma \ref{lema4} it suffices to consider $z\in R$ and it is possible to define the sequence $(n_i)_{i\geq 0}$ where $n_0=0$ and $n_i=\min\{n\in\mathbb{N}: n>{n_{i-1}} \textrm{ and } f^n(z)\in R\}$, for all $i\geq 1$. From this, we will prove that if $z\in R$, then there exists $j\in\mathbb{N}$ such that $f^{n_{j+l}}(z)\in [a_1,a_1+|c|^l|a_1|^3]\times[a_1-|c|^l|a_1|^3,a_1]$, for all $l\geq 0$ and so $\displaystyle\lim_{l\to+\infty}f^{n_{j+l}}(z)=(a_1,a_1)$.

\begin{lemma}
Let $-1<c<0$. If $z\in Z_0$, then $f^{n_i}(z)\in [a_1,a_1+|c|^i|a_1|^3]\times[a_1- |c|^i|a_1|^3,a_1]$, for all nonnegative integer $i\geq 1$.\label{lemaazia0}
\end{lemma}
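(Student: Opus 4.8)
The plan is to pass to the coordinates $(\xi,\eta)=(x-a_1,y-a_1)$ centred at $\alpha$. Since $a_1^2+c=a_1$, a direct computation gives
\[
 f(a_1+\xi,\,a_1+\eta)=\bigl(a_1+\xi(a_1+\eta)+a_1\eta,\ a_1+\xi\bigr),
\]
so the first coordinate of $f$ is, after subtracting $a_1$, the function $\xi(a_1+\eta)+a_1\eta$, which is \emph{affine in $\xi$} with slope $a_1+\eta<0$ on the relevant region. In these coordinates $Z_0$ is the box $0\le\xi\le|a_1|^3$, $-|a_1|^3\le\eta\le 0$, and $R,S,T,U$ are, up to their outer edges, the four quadrants around the origin. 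Throughout I would use that $c=a_1-a_1^2$, hence $|c|=|a_1|+|a_1|^2$, and that $0<|c|<1$ because $-1<c<0$; in particular $|a_1|^2\le|a_1|\le|c|<1$.

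I would deduce Lemma \ref{lemaazia0} by induction from the following self-contained statement. \emph{Claim: for every $\delta$ with $0<\delta\le|a_1|^3$, the box $B_\delta:=[a_1,a_1+\delta]\times[a_1-\delta,a_1]$ is contained in $R$, and the first return to $R$ of any point of $B_\delta$ lies in $B_{|c|\delta}$.} Granting this, the lemma follows: the case $i=1$ is the Claim applied with $\delta=|a_1|^3$ to $z\in Z_0=B_{|a_1|^3}$, since by definition $n_1$ is the first return time of $z$ to $R$; and if $f^{n_i}(z)\in B_{|c|^i|a_1|^3}$, then because $0<|c|^i|a_1|^3\le|a_1|^3$ one applies the Claim with $\delta=|c|^i|a_1|^3$ to $w=f^{n_i}(z)$, whose first return to $R$ is exactly $f^{n_{i+1}}(z)$, and obtains $f^{n_{i+1}}(z)\in B_{|c|^{i+1}|a_1|^3}$.

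To prove the Claim I would run the kind of interval computation used throughout Section \ref{secaokmais}. By Lemma \ref{lema4} together with $f(Y)\subset Y$, every point of $R$ returns to $R$ within at most four steps, along one of the itineraries $R\to T\to R$, $R\to T\to U\to R$, $R\to S\to T\to R$, $R\to S\to T\to U\to R$ (boundary points with $x=a_1$, in particular $\alpha$, return in one step and are trivially in $B_{|c|\delta}$). Because a box need not map into a single one of $S,T,U,R$, at each step one keeps only the bounding box, splits it by the sign of the first coordinate into the piece that has entered $R$, which one then checks lies in $B_{|c|\delta}$, and the piece that continues, and iterates. Using that the first coordinate is affine and monotone in $\xi$, the bounding boxes at steps $1,\dots,4$ are computed exactly; at the return step one always reaches the form $[0,M\delta]\times[-m\delta,0]$ with $M,m$ polynomials in $|a_1|$ (and $\delta$), and the required inclusions reduce---after substituting $|c|=|a_1|+|a_1|^2$ and using $\delta\le|a_1|^3$---to elementary inequalities valid for all $|a_1|\in\bigl(0,\tfrac{\sqrt5-1}{2}\bigr)$, e.g.\ $|a_1|^2\le|c|$, $|a_1|(1+\delta)\le 1$, and $2|a_1|^2+(|a_1|^3+|a_1|^4)\delta\le 1$.

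The main obstacle is keeping the estimates tight enough. Bounding each term of $\xi(a_1+\eta)+a_1\eta$ independently is too lossy: it produces spurious $O(\delta^2)$ blow-up which, compounded over four steps, can exceed the target radius $|c|\delta$ once $|a_1|$ is not small. One must instead use the affine-in-$\xi$ structure to place the extrema at corners of the current box and carry the correct (much smaller) bounding boxes along each itinerary---after which the contraction of the linear part, which is by a factor of order $|a_1|$ over two steps while the target only asks for contraction by $|c|=|a_1|+|a_1|^2>|a_1|$, leaves genuine slack. Verifying that this slack stays positive for every admissible itinerary and every $c\in(-1,0)$ is the technical heart of the argument.
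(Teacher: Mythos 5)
Your plan is essentially the paper's proof: the same four first-return itineraries $R\to T\to R$, $R\to T\to U\to R$, $R\to S\to T\to R$, $R\to S\to T\to U\to R$ given by Lemma \ref{lema4}, the same induction on returns with contraction factor $|c|$ per return (your Claim with $\delta=|c|^k|a_1|^3$ is exactly the paper's inductive step for the boxes $X(a,b)$), and the same algebraic engine, since your identity for the displacement $\xi(a_1+\eta)+a_1\eta$ is just the paper's $xy-a_1^2=xy+c-a_1$ written in centred coordinates; the normalization $|c|=|a_1|+|a_1|^2$ is also the paper's closing step $|a_1|^4+|a_1|^5=|a_1|^3(a_1^2-a_1)=|c|\,|a_1|^3$.

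The one substantive reservation is that you defer precisely the part that constitutes the paper's entire proof: the explicit bounding boxes along each itinerary and the final comparison of the four return boxes with $B_{|c|\delta}$. This cannot be waved through on the grounds of ``genuine slack,'' because for the itinerary $R\to S\to T\to R$ the paper's return box has half-width $\lambda_2=\delta|a_1|+\delta|a_1|^2-\delta^2|a_1|=|c|\delta-|a_1|\delta^2$ in the $y$-direction, i.e.\ the target $|c|\delta$ is attained to first order in $\delta$ and the inequality survives only by the negative $O(\delta^2)$ correction. So any bookkeeping that leaks even a second-order term in the wrong direction breaks the induction, and the elementary inequalities you list (e.g.\ $|a_1|(1+\delta)\le 1$) are not the ones that actually close the argument. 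Your reduction of the lemma to the Claim, and the Claim itself, are correct; but as written the verification of the Claim is an announcement of a computation rather than the computation, and that computation is tight enough that it must actually be carried out, as the paper does.
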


\begin{proof}
For each set $X\in\{R,S,T,U\}$, let $X(a,b)=\{(x,y)\in X:|x-a_1|\leq a \textrm{ and } |y-a_1|\leq b\}$. Thus, 
\begin{center} 
$R=R(|a_1|,|a_1|^2) \textrm{, } S=S(|a_1|,|a_1|)$, $T=T(|a_1|^2,|a_1|) \textrm{ and } U=U(|a_1|^2,|a_1|^2)$.
\end{center}	
	Let $k\geq 0$ be a nonnegative integer and $z=(x,y)\in R(|c|^k|a_1|^3,|c|^k|a_1|^3)$.

We will prove that $f^{n_1}(z)\in R(|c|^{k+1}|a_1|^3,|c|^{k+1}|a_1|^3)$ and for this, from Lemma \ref{lema4}, we need to consider the following cases:

\noindent $\textbf{(1)}$ $f(z)\in S$,  $f^2(z)\in T$,  $f^3(z)\in U$ and  $f^4(z)\in R$ ($n_1=4$).

\noindent $\textbf{(2)}$ $f(z)\in S$,  $f^2(z)\in T$ and  $f^3(z)\in R$ ($n_1=3$).

\noindent $\textbf{(3)}$ $f(z)\in T$,  $f^2(z)\in U$ and  $f^3(z)\in R$ ($n_1=3$).

\noindent $\textbf{(4)}$ $f(z)\in T$ and $f^2(z)\in R$ ($n_1=2$).

\vspace{1em}

For the proof of Lemma \ref{lemaazia0}, remember that $xy-a_1^2=xy+c-a_1$.

If $(x,y)\in R(|c|^k|a_1|^3,|c|^k|a_1|^3)$, $R=[a_1,0]\times [c,a_1]$, then
	$$\left\{
	\begin{array}{l}
	a_1\leq x\leq a_1+|c|^k|a_1|^3 \\
	a_1-|c|^k|a_1|^3\leq y\leq a_1
	\end{array}
	\right.
	\Longrightarrow -|c|^k|a_1|^4\leq xy-a_1^2\leq |c|^k|a_1|^4,
	$$
\noindent i.e. $f(R(|c|^k|a_1|^3,|c|^k|a_1|^3))\subset S(|c|^k|a_1|^4,|c|^k|a_1|^3)\cup T(|c|^k|a_1|^4,|c|^k|a_1|^3)$.

\vspace{1em} 

If $(x,y)\in S(|c|^k|a_1|^4,|c|^k|a_1|^3)$, $S=[a_1,0]\times [a_1,0]$, then
$$\left\{
\begin{array}{l}
a_1\leq x\leq a_1+|c|^k|a_1|^4 \\
a_1\leq y\leq a_1+|c|^k|a_1|^3
\end{array}
\right.
\Longrightarrow -|c|^k|a_1|^4-|c|^k|a_1|^5+|c|^{2k}|a_1|^7\leq xy-a_1^2\leq 0,
$$

\noindent i.e. $f(S(|c|^k|a_1|^4,|c|^k|a_1|^3))\subset T(|c|^k|a_1|^4+|c|^k|a_1|^5-|c|^{2k}|a_1|^7,|c|^k|a_1|^4)$.

\vspace{1em}

If $(x,y)\in T(|c|^k|a_1|^4+|c|^k|a_1|^5-|c|^{2k}|a_1|^7,|c|^k|a_1|^4)$, $T=[c,a_1]\times [a_1,0]$, then
$$\left\{
\begin{array}{l}
a_1-|c|^k|a_1|^4-|c|^k|a_1|^5+|c|^{2k}|a_1|^7\leq x\leq a_1 \\
a_1\leq y\leq a_1+|c|^k|a_1|^4.
\end{array}
\right. 
$$

\noindent Thus $-|c|^k|a_1|^5\leq xy-a_1^2\leq |c|^k|a_1|^5+|c|^k|a_1|^6-|c|^{2k}|a_1|^8$, i.e. $f( T(|c|^k|a_1|^4+|c|^k|a_1|^5-|c|^{2k}|a_1|^7,|c|^k|a_1|^4)) \subset U(|c|^k|a_1|^5,|c|^k|a_1|^4+|c|^k|a_1|^5-|c|^{2k}|a_1|^7)\cup R(|c|^k|a_1|^5+|c|^k|a_1|^6-|c|^{2k}|a_1|^8,|c|^k|a_1|^4+|c|^k|a_1|^5-|c|^{2k}|a_1|^7)$.

\vspace{1em}

If $(x,y)\in U(|c|^k|a_1|^5,|c|^k|a_1|^4+|c|^k|a_1|^5-|c|^{2k}|a_1|^7)$, $U=[c,a_1]\times [c,a_1]$, then
$$
\begin{array}{rcl}
\left\{
\begin{array}{l}
a_1-|c|^k|a_1|^5\leq x\leq a_1 \\
a_1-|c|^k|a_1|^4-|c|^k|a_1|^5+|c|^{2k}|a_1|^7\leq y\leq a_1 .
\end{array}
\right. \\
\end{array}
$$

\noindent Thus, $0\leq xy-a_1^2\leq |c|^k|a_1|^5+2|c|^k|a_1|^6-|c|^{2k}|a_1|^8+|c|^{2k}|a_1|^9+|c|^{2k}|a_1|^{10}-|c|^{3k}|a_1|^{12}$, i.e. $f(U(|c|^k|a_1|^5,|c|^k|a_1|^4+|c|^k|a_1|^5-|c|^{2k}|a_1|^7))\subset R(|c|^k|a_1|^5+2|c|^k|a_1|^6-|c|^{2k}|a_1|^8+|c|^{2k}|a_1|^9+|c|^{2k}|a_1|^{10}-|c|^{3k}|a_1|^{12},|c|^k|a_1|^5)$.

\vspace{1em}

If $(x,y)\in T(|c|^k|a_1|^4,|c|^k|a_1|^3)$, $T=[c,a_1]\times [a_1,0]$, then
$$\left\{
\begin{array}{l}
a_1-|c|^k|a_1|^4\leq x\leq a_1 \\
a_1\leq y\leq a_1+|c|^k|a_1|^3
\end{array}
\right.
\Longrightarrow -|c|^k|a_1|^4\leq xy-a_1^2\leq |c|^k|a_1|^5,
$$

\noindent i.e. $f(T(|c|^k|a_1|^4,|c|^k|a_1|^3))\subset U(|c|^k|a_1|^4,|c|^k|a_1|^4)\cup R(|c|^k|a_1|^5,|c|^k|a_1|^4)$.

\vspace{1em}

If $(x,y)\in U(|c|^k|a_1|^4,|c|^k|a_1|^4)$, $U=[c,a_1]\times [c,a_1]$, then
$$\left\{
\begin{array}{l}
a_1-|c|^k|a_1|^4\leq x\leq a_1 \\
a_1-|c|^k|a_1|^4\leq y\leq a_1
\end{array}
\right.
\Longrightarrow 0\leq xy-a_1^2\leq 2|c|^k|a_1|^5+|c|^{2k}|a_1|^8,
$$

\noindent i.e. $f( U(|c|^k|a_1|^4,|c|^k|a_1|^4))\subset R(2|c|^k|a_1|^5+|c|^{2k}|a_1|^8,|c|^k|a_1|^4)$.
	
\vspace{1em} 

Hence, we have proved that

\noindent $(a)$ in case $(1)$,  $f^{n_{1}}(z)\in R(|c|^k|a_1|^5+2|c|^k|a_1|^6-|c|^{2k}|a_1|^8+|c|^{2k}|a_1|^9+|c|^{2k}|a_1|^{10}-|c|^{3k}|a_1|^{12}, |c|^k|a_1|^5)=: R(\gamma_1,\lambda_1)$;

\noindent $(b)$ in case $(2)$, $f^{n_{1}}(z)\in  R(|c|^k|a_1|^5+|c|^k|a_1|^6-|c|^{2k}|a_1|^8,|c|^k|a_1|^4+|c|^k|a_1|^5-|c|^{2k}|a_1|^7)=: R(\gamma_2,\lambda_2)$;

\noindent $(c)$ in case $(3)$, $f^{n_{1}}(z)\in R(2|c|^k|a_1|^5+|c|^{2k}|a_1|^8,|c|^k|a_1|^4)=: R(\gamma_3,\lambda_3)$;

\noindent $(d)$ in case $(4)$, $f^{n_{1}}(z)\in R(|c|^k|a_1|^5,|c|^k|a_1|^4)=: R(\gamma_4,\lambda_4)$.
	
If $R'$ is the union of the subsets in $(a)$, $(b)$, $(c)$ and $(d)$, then it is not hard to prove that $R'\subset R(|c|^{k+1}|a_1|^3,|c|^{k+1}|a_1|^3 )$ and we are done. In fact, 
observe that
\begin{center}
$\lambda_1,\lambda_2,\lambda_3,\lambda_4,\gamma_2,\gamma_4\leq |c|^k|a_1|^4+|c|^k|a_1|^5=|c|^k|a_1|^3(a_1^2-a_1)=|c|^{k+1}|a_1|^3$.
\end{center}

Observe also that
\begin{center}
$\gamma_1<|c|^k|a_1|^5+2|c|^k|a_1|^6+|c|^{2k}|a_1|^8(-1-a_1+a_1^2) <|c|^k|a_1|^5+2|c|^k|a_1|^6+|c|^{2k}|a_1|^8(c-a_1+a_1^2)=$  $|c|^k|a_1|^5+2|c|^k|a_1|^6 < 2|c|^k|a_1|^5+|c|^k|a_1|^6 $
\end{center}
and
\begin{center}
	$\gamma_3\leq 2|c|^k|a_1|^5+|c|^k|a_1|^6 $.
\end{center}
On the other hand, we have that
\begin{center}
$2|c|^k|a_1|^5+|c|^k|a_1|^6 \leq |c|^{k+1}|a_1|^3$
\end{center}
if and only if
\begin{center}
$2a_1^2-a_1^3\leq -c$, i.e. $a_1^2+c+a_1^2-a_1^3=a_1+a_1^2-a_1^3\leq 0$,
\end{center}
which is true since $$a_1+a_1^2-a_1^3=-a_1(a_1^2-a_1-1)<-a_1(a_1^2-a_1+c)=0$$.
\end{proof}

\begin{lemma}
	Let $-1<c\leq -\frac{3}{4}$.
	
	\noindent $1.$ If $z\in Z_1$, then there exists $j\geq 1$ such that $f^{n_j}(z)\in Z_0\cup Z_2\cup Z_3\cup Z_4$. \label{lemaazia1}

 	\noindent $2.$ If $z\in Z_2$, then $f^{n_1}(z)\in Z_0\cup Z_3\cup Z_4$.

	\noindent $3.$ If $z\in Z_3$, then $f^{n_1}(z)\in Z_0\cup Z_4$.
	
	\noindent $4.$ If $z\in Z_4$, then there exists $j\geq 1$ such that $f^{n_j}(z)\in Z_0$.
\end{lemma}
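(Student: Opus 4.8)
The plan is to run, for each of $Z_1,Z_2,Z_3,Z_4$, the same sign-sensitive interval computation already used for $Z_0$ in the proof of Lemma~\ref{lemaazia0}. The backbone is Lemma~\ref{lema4}: $f$ maps $Y=R\cup S\cup T\cup U$ into itself, and since the transitions are $R\to S$ or $T$, $\;S\to T$, $\;T\to U$ or $R$, $\;U\to R$, a point of $R$ returns to $R$ after exactly one of the four itineraries $R\to S\to T\to U\to R$, $\;R\to S\to T\to R$, $\;R\to T\to U\to R$, $\;R\to T\to R$; in particular $n_1\le 4$ for every $z\in R$. I will also use the fixed-point identity $a_1^2=a_1-c$, equivalently $|a_1|+|a_1|^2=|c|$ and $a_1-|a_1|^2=c$, which shows in passing that $Z_0\cup Z_1\cup Z_2\cup Z_3\cup Z_4=R$, so the four parts can be treated one region at a time.

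For parts $2$ and $3$ the argument copies the one for $Z_0$. Starting from the defining inequalities of $Z_2$ (resp.\ $Z_3$) and writing $xy-a_1^2=xy+c-a_1$, one propagates the bounds on $x$ and on $y$ through $f$ along each of the four itineraries, keeping track of signs at every step (on $R$ one has $x-a_1\ge0$, $y-a_1\le0$ and $a_1<0$, so the cross terms are signed and a crude triangle-inequality estimate would be too weak — just as in the $Z_0$ computation). Each branch ends in a rectangle $R(\gamma,\lambda)$ in the notation of Lemma~\ref{lemaazia0}, and the conclusions $f^{n_1}(z)\in Z_0\cup Z_3\cup Z_4$ and $f^{n_1}(z)\in Z_0\cup Z_4$ become a finite list of polynomial inequalities in $a_1$ — of the same flavour as $2a_1^2-a_1^3\le -c$ used in Lemma~\ref{lemaazia0} — that hold throughout $-1<c\le-\frac{3}{4}$. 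The underlying mechanism is that one turn of the spiral replaces an $x$-deviation $\delta$ by one of size $\approx|a_1|\,\delta$ while the new $y$-deviation is a previous $x$-deviation; this is what carries $Z_3$ into the band $|x-a_1|\le|a_1|^3$ and $Z_2$ into $|x-a_1|\le|a_1|^2$.

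For parts $1$ and $4$ a single return is not enough, because $f^{n_1}(Z_1)$ may again meet $Z_1$ and $f^{n_1}(Z_4)$ may again meet $Z_4$. Here I would add a monotonicity argument along the return subsequence $\big(f^{n_i}(z)\big)_{i\ge0}$, in the spirit of the increasing-sequence argument in the proof of Lemma~\ref{prop66}. While $f^{n_i}(z)$ stays in $Z_1$ (resp.\ in $Z_4$), the return dynamics on the first coordinate is, by the itinerary computation, the iteration of an explicit one-variable map; one shows the deviation $|x_{n_i}-a_1|$ is monotone and bounded, hence convergent, and that its limit would have to be a fixed point of that explicit map lying in the closure of the $Z_1$- (resp.\ $Z_4$-) band — which a direct root computation rules out for $-1<c\le-\frac{3}{4}$. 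Hence after finitely many returns the deviation falls below $|a_1|^2$ (resp.\ $|a_1|^3$), and then the containments $f^{n_1}(Z_1)\subset R$ (immediate from the definition of $n_1$) and $f^{n_1}(Z_4)\subset Z_0\cup Z_2\cup Z_3\cup Z_4$ (from the itinerary analysis, together with the accompanying control of the $y$-deviation) place the point in $Z_0\cup Z_2\cup Z_3\cup Z_4$, resp.\ in $Z_0$, which is the index $j$ we need.

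The main obstacle is the combination of bulk and sharpness: four itineraries across four regions, with signed error terms that must be kept organized enough that the final inclusions reduce to transparent inequalities in $a_1$; and, for parts $1$ and $4$, correctly identifying the explicit return map on the first coordinate and checking its fixed points — this is the delicate end of the parameter range, since as $c\to-1$ the $3$-cycle $\{p,f(p),f^2(p)\}$ approaches $Y$, so that return map acquires a fixed point close to the boundary of the $Z_1$-band and the monotone sequence converges to it ever more slowly (though still in finite time, since one checks that fixed point lies outside $Z_1$ itself). Once the four parts are established, Lemma~\ref{lemaaziai} follows by chaining them: $Z_1$ eventually enters $Z_0\cup Z_2\cup Z_3\cup Z_4$, then $Z_3\to Z_0\cup Z_4$, $Z_2\to Z_0\cup Z_3\cup Z_4$ and $Z_4\to\cdots\to Z_0$, while $Z_0$ is drawn to $(a_1,a_1)$ by Lemma~\ref{lemaazia0}.
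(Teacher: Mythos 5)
Your proposal matches the paper's proof in all essentials: parts 2 and 3 are handled there by exactly the sign-sensitive interval propagation you describe (each return branch ending in an explicit rectangle whose inclusion in $Z_0\cup Z_3\cup Z_4$, resp.\ $Z_0\cup Z_4$, reduces to polynomial inequalities settled via $a_1^2-a_1+c=0$), while part 1 (with part 4 declared analogous) is proved by iterating an explicit one-variable return map $h_1$ on the $x$-deviation $\varepsilon_i$ and showing $h_1(x)-x<0$ on $[|a_1|^2,|a_1|]$ excludes any limiting fixed point --- precisely your monotone-return-sequence argument. No substantive difference in route.
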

\begin{proof} First observe that almost every inclusions in the proof of items $1$, $2$, $3$ and $4$ follows from Lemma \ref{lema4} and from accounts that are easy to be solved  since $-1<c<a_1<0$ and $a_1^2-a_1+c=0$.

\smallskip 

\textbf{1.}	Let $z\in Z_1=]a_1+|a_1|^2,0]\times[a_1-|a_1|^2,a_1]\subset R$. Hence, there exists $|a_1|^2< \varepsilon_0\leq |a_1|$ such that $z\in ]a_1+a_1^2,a_1+\varepsilon_0]\times[a_1-a_1^2,a_1]$. Thus, we have

\smallskip 

\noindent $f(z)\in [a_1+a_1\varepsilon_0,a_1-a_1^4[\times ]a_1+a_1^2,a_1+\varepsilon_0]\subset T$,

\noindent $f^2(z)\in  ](a_1-a_1^4)\varepsilon_0 +a_1-a_1^5 ,(a_1^2+a_1^3)\varepsilon_0 +a_1+a_1^3[\times [a_1+a_1\varepsilon_0,a_1-a_1^4[ \subset U$,

\noindent $f^3(z)\in ](a_1^3+a_1^4-a_1^6-a_1^7)\varepsilon_0+a_1+a_1^4-a_1^6-a_1^7, (a_1^2-a_1^5)\varepsilon_0^2 +$ 

\noindent $(2a_1^2-a_1^5-a_1^6)\varepsilon_0 +a_1-a_1^6[\times ](a_1-a_1^4)\varepsilon_0 +a_1-a_1^5 ,(a_1^2+a_1^3)\varepsilon_0 +a_1+a_1^3[\subset R$.

\smallskip 

Let $h_1:\mathbb{R}\longrightarrow\mathbb{R}$ be the map defined by $$h_1(x)=(a_1^2-a_1^5)x^2 +(2a_1^2-a_1^5-a_1^6)x +a_1-a_1^6.$$
	
	If $f^{n_i}(z)\in Z_1$, for all $i\geq 0$, then $f^{n_i}(z)\in ]a_1+|a_1|^2, a_1+\varepsilon_i[\times]a_1-|a_1|^2,a_1[\subset Z_1$, where $\varepsilon_{i+1}=h_1(\varepsilon_i)$, for all $i\geq 0$.
	
Obviously, $(\varepsilon_i)_{i\geq 0}$ is a decreasing sequence and for each $i$, $\varepsilon_i \in]a_1^2,|a_1|]$.
Hence, there exists $\varepsilon\in [|a_1|^2,|a_1|]$ such that $(\varepsilon_i)_{i\geq 0}$ converges to $\varepsilon$. We thus have $\varepsilon=h_1(\varepsilon)$ which is impossible since $h_1(x)-x<0$ for all $x\in [|a_1|^2,|a_1|]$.
	
	Therefore,  there exists $j\geq 1$ such that $f^{n_j}(z)\in Z_0\cup Z_2\cup Z_3\cup Z_4$.

\smallskip

\textbf{2.} Let $z\in Z_2=[a_1-a_1^3,a_1+a_1^2]\times[a_1-a_1^2,a_1+a_1^3]\subset R$. Thus,

\smallskip

\noindent $f(z)\in ( [a_1+a_1^3+a_1^4+a_1^5, a_1]\times[a_1-a_1^3,a_1+a_1^2]) \cup$

\hfill $( [a_1, a_1-a_1^3-a_1^4+a_1^5]\times [a_1-a_1^3,a_1+a_1^2])\subset T\cup S$.

\smallskip

Hence, if $f(z)\in [a_1+a_1^3+a_1^4+a_1^5, a_1]\times[a_1-a_1^3,a_1+a_1^2] \subset T$, then

\smallskip

\noindent $f^2(z)\in [a_1+a_1^3,a_1+a_1^5-a_1^7-a_1^8]\times[a_1+a_1^3+a_1^4+a_1^5,a_1] \subset U$ and

\noindent $f^3(z)\in [a_1+a_1^6-a_1^8-a_1^9,a_1+2a_1^4+a_1^5+2a_1^6+a_1^7+a_1^8]\times$ 

\hfill $[a_1+a_1^3,a_1+a_1^5-a_1^7-a_1^8]\subset R$

\smallskip

\noindent and if $f(z)\in [a_1, a_1-a_1^3-a_1^4+a_1^5]\times [a_1-a_1^3,a_1+a_1^2] \subset S$, then

\smallskip

\noindent $f^2(z)\in [a_1+a_1^3-a_1^4-2a_1^5+a_1^7,a_1-a_1^4]\times[a_1,a_1-a_1^3-a_1^4+a_1^5] \subset T $,

\smallskip

\noindent $f^3(z)\in [a_1-a_1^4-2a_1^5+a_1^6+a_1^7+a_1^8-a_1^9,a_1+a_1^4-a_1^5-2a_1^6+a_1^8]\times$

\hfill $[a_1+a_1^3-a_1^4-2a_1^5+a_1^7,a_1-a_1^4]\subset$
$ [a_1-a_1^4,a_1+a_1^4-a_1^5-2a_1^6+a_1^8]\times[a_1+a_1^3-a_1^4-2a_1^5+a_1^7,a_1-a_1^4]=$

\smallskip

\noindent $([a_1-a_1^4,a_1]\times[a_1+a_1^3-a_1^4-2a_1^5+a_1^7,a_1-a_1^4])\cup$

\hfill $([a_1,a_1+a_1^4-a_1^5-2a_1^6+a_1^8]\times[a_1+a_1^3-a_1^4-2a_1^5+a_1^7,a_1-a_1^4])\subset U\cup R$

\smallskip

\noindent and if $f^3(z)\in [a_1-a_1^4,a_1]\times[a_1+a_1^3-a_1^4-2a_1^5+a_1^7,a_1-a_1^4]\subset U$, then

\smallskip

\noindent $f^4(z)\in  [a_1-a_1^5,a_1+a_1^4-2a_1^5-2a_1^6-a_1^7+2a_1^8+2a_1^9-a_1^{11}] \times[a_1-a_1^4,a_1]\subset R$.

\smallskip

Hence, if $z\in Z_2$ then $f^{n_1}(z)$ is in one of the following sets:

\noindent $(a)$ \noindent $[a_1+a_1^6-a_1^8-a_1^9,a_1+2a_1^4+a_1^5+2a_1^6+a_1^7+a_1^8]\times$ 

\hfill $[a_1+a_1^3,a_1+a_1^5-a_1^7-a_1^8]=: R_a$,

\noindent $(b)$ $[a_1,a_1+a_1^4-a_1^5-2a_1^6+a_1^8]\times[a_1+a_1^3-a_1^4-2a_1^5+a_1^7,a_1-a_1^4]=:R_b$,

\noindent $(c)$ $[a_1-a_1^5,a_1+a_1^4-2a_1^5-2a_1^6-a_1^7+2a_1^8+2a_1^9-a_1^{11}] \times[a_1-a_1^4,a_1]=: R_c$.

To finish the proof, we need to establish the following claim.

\smallskip 

\textbf{Claim 2:} $R_a\cup R_b\cup R_c\subset Z_0\cup Z_3\cup Z_4$.

\smallskip 
In fact, we have
\begin{center}
$R_a\subset [a_1,a_1+2a_1^4+a_1^5+2a_1^6+a_1^7+a_1^8]\times[a_1+a_1^3,a_1]$
\end{center}
and 
\begin{center}
$a_1+2a_1^4+a_1^5+2a_1^6+a_1^7+a_1^8\leq a_1+2a_1^4+a_1^6\leq a_1+2a_1^4-a_1^5=$ $a_1+a_1^4+ca_1^3\leq a_1+a_1^4-a_1^3=a_1-ca_1^2\leq a_1+a_1^2$,
\end{center}
i.e. $R_a\subset [a_1,a_1+a_1^2]\times[a_1+a_1^3,a_1]\subset Z_0\cup Z_4$.
\smallskip 

We also have that
\begin{center}
$R_b\subset [a_1,a_1+a_1^4-a_1^5-2a_1^6+a_1^8]\times[a_1-a_1^2,a_1]$,
\end{center}
and
\begin{center}
$a_1+a_1^4-a_1^5-2a_1^6+a_1^8\leq a_1+a_1^4-a_1^5= a_1+ca_1^3\leq a_1-a_1^3$,
\end{center}
i.e. $R_b\subset [a_1,a_1-a_1^3]\times[a_1-a_1^2,a_1]\subset Z_0\cup Z_3$.

\smallskip 

We also have that
\begin{center}
	$R_c\subset [a_1,a_1+a_1^4-2a_1^5-2a_1^6-a_1^7+2a_1^8+2a_1^9-a_1^{11}] \times[a_1+a_1^3,a_1]$,
\end{center}
and
\begin{center}
$a_1+a_1^4-2a_1^5-2a_1^6-a_1^7+2a_1^8+2a_1^9-a_1^{11}\leq a_1+a_1^4-2a_1^5+a_1^8= a_1+ca_1^3-a_1^5+a_1^8 \leq a_1-a_1^3-a_1^5+a_1^6 =a_1-a_1^3-ca_1^4\leq a_1-a_1^3+a_1^4=a_1-ca_1^2\leq a_1+a_1^2$,
\end{center}
i.e. $R_c\subset [a_1,a_1+a_1^2]\times[a_1+a_1^3,a_1]\subset Z_0\cup Z_4$, and the proof of Claim 2 is complete.

\smallskip

The proofs of 3 and 4 are respectively similar to the proofs of 2 and 1.
\end{proof}

\begin{proof}[Proof of Lemma  \ref{lemaaziai}] Let $z\in Y$. From Lemma \ref{lema4} it suffices to consider $z\in R$ and it is possible to define the sequence $(n_i)_{i\geq 0}$ where $n_0=0$ and $n_i=\min\{n\in\mathbb{N}: n>{n_{i-1}} \textrm{ and } f^n(z)\in R\}$, for all $i\geq 1$. Since $R=Z_0\cup Z_1\cup Z_2\cup Z_3\cup Z_4$, it follows from Lemma \ref{lemaazia1} that there exists $j\geq 1$ such that $f^{n_j}(z)\in Z_0$. Thus, putting $f^n(z)=(x_n,y_n)$, for all $n\geq 0$, it follows from Lemma \ref{lemaazia0} that $\max\{|x_{n_{j+i}}-a_1|,|y_{n_{j+i}}-a_1|\}\leq |c|^i|a_1|^3$, for all $i \geq 0$, which conclude the proof of Lemma  \ref{lemaaziai}.
\end{proof}

Now, we proceed to the proof of Proposition \ref{propostion2}. Let $z\in\mathcal{R}\setminus \{p,f(p),f^2(p)\}$. From Lemmas \ref{lema3} and \ref{lema33}, there exists $n_1\in \mathbb{N}$ such that $f^{n_1}(z)\in R_2$. Thus, from Lemma \ref{prop66} there exists an integer $n_2\geq n_1$ such that $f^{n_2}(z)\in Y$ and from Lemmas \ref{lemaazia} and \ref{lemaaziai}, we are done. \hfill $\Box$

\begin{remark}
In \cite{bcm} (the case $0< c<\frac{1}{4}$), the points of $W^s(\alpha)$ converges to $\alpha$ in a  monotonous way. Hence, the case $-1<c<0$ is more difficult and delicate since the points of $W^s(\alpha)$ converges to $\alpha$ in a spiral way, which justifies we study the dynamic of $f$ in a more refined filtration around the point $\alpha$, beyond the filtration used in \cite{bcm}.
\end{remark}

\subsection{Proof of Proposition \ref{proposition3}} \label{subsecwstheta} For the proof of Proposition \ref{proposition3}, we need the following (see the relation (\ref{notacao}) for notations).

\begin{lemma}
	The following results are valid:
$$
\begin{array}{lllll}
1. & f(A)\subset B\cup R_0\cup R_1\cup H_1\cup H_2; & \hspace{.3cm} & 6. & f(F)\subset R_1\cup C; \\
2. & f(B)\subset D\cup R_2;                         &               & 7. & f(G)\subset M\cup B; \\
3. & f(C)\subset N\cup E;                           &               & 8. & f(H_1)\subset B\cup R_0\cup H_1; \\
4. & f(D)\subset E\cup F\cup P;                     &               & 9. & f(H_2)\subset B\cup A\cup L. \\
5. & f(E)\subset R_2\cup R_3\cup G;                 &               &    &
\end{array}$$ \label{lema314}	
\end{lemma}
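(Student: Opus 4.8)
The plan is to prove all nine inclusions by one elementary device. If $(x,y)$ ranges over a product set $I\times J$ (with possibly infinite endpoints), then $f(x,y)=(xy+c,x)$ has second coordinate in $I$, so the only work is to locate the range of $xy+c$. On each piece of the filtration (\ref{notacao}) the coordinates $x$ and $y$ have constant sign, hence $xy$ is monotone in each variable separately and the range of $xy+c$ over $I\times J$ is an interval whose endpoints are read off from the corners of $I\times J$. The only arithmetic facts I need are $-1<c<0$ (so that $0<1+c<1$), $a_2>1$, and the fixed-point identity $a_2^2-a_2+c=0$, i.e.\ $a_2^2+c=a_2$. So for each piece $X$ the task reduces to writing down a box $B_X\supseteq f(X)$ and checking $B_X\subseteq(\text{claimed union})$.

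First I would dispose of the pieces $B,C,F,G$, where the product $xy$ is $\le 0$, so the first coordinate of the image is $\le c<0$. One gets $f(B)\subseteq\,]-\infty,c]\times[-1,0]$, $f(C)\subseteq\,]-\infty,c]\times\,]-\infty,-1]$, $f(F)\subseteq\,]-\infty,c]\times[0,1+c]$, $f(G)\subseteq\,]-\infty,c]\times[1+c,+\infty[$, and since $]-\infty,c]\subseteq\,]-\infty,0]$ these lie in $D\cup R_2$, $N\cup E$, $R_1\cup C$, $M\cup B$ respectively (inclusions $2,3,6,7$). Dually, on $D$ and $E$ the product $xy$ is $\ge 0$, so the first coordinate is $\ge c>-1$; hence $f(D)\subseteq[c,+\infty[\times\,]-\infty,-1]\subseteq[-1,+\infty[\times\,]-\infty,-1]=E\cup F\cup P$ and $f(E)\subseteq[c,+\infty[\times[-1,0]\subseteq[-1,+\infty[\times[-1,0]=R_2\cup R_3\cup G$ (inclusions $4,5$).

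The remaining three pieces $A,H_1,H_2$ are those on which the image straddles the line $x=0$, which is precisely why the target sets are unions straddling that line. On $A$ we have $x\ge 0$, $y\ge a_2>0$, so $xy\ge 0$ and $f(A)\subseteq[c,+\infty[\times[0,a_2]$; splitting on the sign of the first coordinate (legitimate since $c>-1$), the part with first coordinate in $[-1,0]$ lies in $B\cup R_1=[-1,0]\times[0,+\infty[$ and the part with first coordinate $\ge 0$ lies in $R_0\cup H_1\cup H_2=[0,+\infty[\times[0,a_2]$, giving inclusion $1$. On $H_1$, from $x,y\ge 0$ and $xy\le a_2^2$ together with $a_2^2+c=a_2$ we get $f(H_1)\subseteq[c,a_2]\times[1+c,a_2]$; the negative part of this box lies in $B$ and the nonnegative part in $R_0\cup H_1=[0,a_2]\times[0,a_2]$, giving inclusion $8$. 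On $H_2$, $x\ge a_2$ and $xy\ge 0$ give $f(H_2)\subseteq[c,+\infty[\times[a_2,+\infty[$; the negative part lies in $B$ (using $a_2>1>1+c$) and the nonnegative part in $A\cup L=[0,+\infty[\times[a_2,+\infty[$, giving inclusion $9$.

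There is no real obstacle here: this is a bookkeeping exercise, and the proof can be written as nine short displayed chains $f(X)\subseteq(\text{box})\subseteq(\text{union})$, exactly as in \cite{bcm}. The only two points requiring a moment's care are (i) reading the signs of $x$ and $y$ correctly on each piece so as to pick the corner of $I\times J$ that realises the extremal value of $xy+c$, and (ii) remembering to invoke $a_2^2+c=a_2$ in the two cases ($H_1$, $H_2$) that meet the vertical line $x=a_2$, and $-1<c<0$ (equivalently $0<1+c<1$) throughout.
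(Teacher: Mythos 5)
Your proposal is correct and follows essentially the same route as the paper: for each piece you compute the identical bounding box for its image (e.g.\ $f(A)\subset[c,+\infty[\times[0,a_2]$, $f(H_1)\subset[c,a_2]\times[1+c,a_2]$, etc.) and then check that the box sits inside the claimed union. The paper's proof is just the nine one-line chains you describe, so the only difference is that you spell out the sign analysis and the use of $a_2^2+c=a_2$ more explicitly.
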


\begin{proof} $1.$ $f(A)\subset [c,+\infty[\times [0,a_2]\subset B\cup R_0\cup R_1\cup H_1\cup H_2$;
	
	\noindent $2.$ $f(B)\subset ]-\infty,c]\times [-1,0]\subset D\cup R_2$;
	
	\noindent $3.$ $f(C)\subset ]-\infty,c]\times ]-\infty,-1]\subset N\cup E$;
	
	\noindent $4.$ $f(D)\subset [c,+\infty[\times ]-\infty,-1]\subset E\cup F\cup P$;
	
	\noindent $5.$ $f(E)\subset [c,+\infty[\times [-1,0]\subset R_2\cup R_3\cup G$;
	
	\noindent $6.$ $f(F)\subset ]-\infty,c]\times [0,1+c]\subset R_1\cup C$;
	
	\noindent $7.$ $f(G)\subset ]-\infty,c]\times [1+c,+\infty[\subset M\cup B$;
	
	\noindent $8.$ $f(H_1)\subset [c,a_2]\times [1+c,a_2]\subset B\cup R_0\cup H_1$;
	
	\noindent $9.$ $f(H_2)\subset [c,+\infty[\times [a_2,+\infty[\subset B\cup A\cup L$.
\end{proof}

\begin{remark}From Lemmas \ref{lema3} and \ref{lema314} and from Proposition \ref{propinf}, we have that if $f(z)\in A$ (respectively $f(z)\in H_2$), then $z\in  H_2$ (respectively $z\in A$), i.e. $f^{-n-1}(A)\subset f^{-n}( H_2)$ and $f^{-n-1}(H_2)\subset f^{-n}(A)$ for all $n\geq 0$. \label{obsnova}
\end{remark}

\begin{lemma}
We have that $\displaystyle \bigcap_{n=0}^{+\infty}f^{-n}(A\cup H_2)\subset W^s(\theta)$. \label{propa2}
\end{lemma}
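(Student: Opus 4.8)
The goal is to show that any point $z$ whose entire forward orbit stays in $A \cup H_2$ is attracted to the saddle fixed point $\theta = (a_2,a_2)$. The natural strategy is to combine a symbolic/combinatorial analysis of how the orbit moves through the pieces $A$ and $H_2$ with a contraction-type estimate that forces convergence to $\theta$. First I would use Lemma \ref{lema314} together with Proposition \ref{propinf}: parts $1$ and $9$ say that from $A$ the image can only meet $A\cup H_2$ through the piece $H_2$ (the other possibilities $B, R_0, R_1, H_1$ and $L$ leave $A\cup H_2$ except that $f(A)$ can re-enter only via... ), so one must first pin down precisely the transitions inside $A\cup H_2$. Concretely, by Remark \ref{obsnova} a point whose orbit stays in $A\cup H_2$ must alternate: $f(A\cap(\text{orbit}))\subset H_2$ and $f(H_2\cap(\text{orbit}))\subset A$, unless the orbit is eventually in the overlap region. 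So the orbit of $z$ is forced into a rigid two-periodic itinerary $A,H_2,A,H_2,\dots$ (up to an initial term), and in particular all even (or odd) iterates lie in $A$.

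**Key steps.** Writing $f^n(z)=(x_n,y_n)$, I would track the first coordinate along the subsequence lying in $A=[0,a_2]\times[a_2,+\infty[$. On $A$ we have $x_{n+1}=x_n y_n + c$ and $y_{n+1}=x_n$, so two steps give $x_{n+2}=x_n(x_n y_n+c)+c$ and $y_{n+2}=x_n y_n + c$; iterating the map twice sends $A\cup H_2$-orbits back to $A$ and the relevant quantity to control is $|x_n - a_2|$. Using $a_2^2 = a_2 - c$ (the fixed point relation, equivalently $a_2 = a_2 y + c$ when $y=a_2$), one rewrites $x_{n+2} - a_2$ in terms of $x_n - a_2$ and $y_n - a_2$ and shows that the constraint "orbit never leaves $A\cup H_2$" forces these differences to shrink — intuitively, if $x_n$ or $y_n$ were bounded away from $a_2$ on the $A$-side, the next image would overshoot into $B$, $R_0$, $L$, or one of the other sets excluded by Lemma \ref{lema314}, contradicting confinement. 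So I would argue by contrapositive: suppose $\limsup \max\{|x_{n}-a_2|,|y_n-a_2|\} = \delta > 0$ along the confined orbit; then pick $n$ large with $\max\{|x_n-a_2|,|y_n-a_2|\}$ close to $\delta$ and show the next one or two iterates must land outside $A\cup H_2$, a contradiction. This gives $f^n(z)\to\theta$, i.e. $z\in W^s(\theta)$.

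**Main obstacle.** The delicate point is the confinement-forces-contraction step: unlike the attracting case (Proposition \ref{propostion2}), here $\theta$ is a \emph{saddle}, so there is a genuine expanding direction and the orbit is NOT automatically contracted — it is only the hypothesis "orbit stays in the bounded-looking window $A\cup H_2$ forever" that kills the expanding direction. Making this rigorous requires carefully choosing coordinates (or sub-rectangles $A(a,b)$, $H_2(a,b)$ shrinking toward $\theta$, in the style of the nested-rectangle arguments of Lemma \ref{lemaazia0}) and checking that $f$ (or $f^2$) maps each such window properly into the next smaller one, with the "bad" parts of the image provably lying in $S$ or in $\mathcal R$, hence leaving $A\cup H_2$. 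The estimates are of the same flavour as those already carried out in the proof of Lemma \ref{lemaazia0}, using repeatedly $a_1^2-a_1+c=0$ and $a_2^2-a_2+c=0$ and the sign information $-1<c<0<a_2$; the bookkeeping is the real work, but there is no conceptual gap once the alternating itinerary $A\leftrightarrow H_2$ is established.
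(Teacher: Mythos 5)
Your reduction to the alternating itinerary $A\to H_2\to A\to\cdots$ via Remark \ref{obsnova} matches the paper, but the core step --- showing that confinement forces convergence to $\theta$ --- is not actually carried out, and the mechanism you propose for it does not work. You suggest arguing by contrapositive: if $\limsup\max\{|x_n-a_2|,|y_n-a_2|\}=\delta>0$, pick $n$ with the distance near $\delta$ and show the next one or two iterates must leave $A\cup H_2$. This is false as a local statement: $A=[0,a_2]\times[a_2,+\infty[$ is unbounded, and the set $\bigcap_{n\ge 0}f^{-n}(A\cup H_2)$ (which is exactly $W^s(\theta)$, an unbounded curve through $\theta$) contains confined points at \emph{every} distance $\delta$ from $\theta$; none of them ever escapes. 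So knowing only that an orbit point sits at distance $\approx\delta$ from $\theta$ cannot yield an escape in finitely many steps, and a nested-rectangles scheme in the style of Lemma \ref{lemaazia0} cannot be initialized, since there is no bounded starting window. The ``no conceptual gap, only bookkeeping'' assessment is therefore too optimistic: the proposal is missing the idea that actually closes the argument.

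The paper's proof is shorter and entirely elementary: writing $f^n(z)=(x_n,y_n)$ with $(x_{2n},y_{2n})\in A$ and $(x_{2n+1},y_{2n+1})\in H_2$, membership in $A$ and $H_2$ translates into the product inequalities $-c\le x_0x_1\le a_2-c\le x_0y_0$ and $a_2-c\le x_1x_2$. Dividing by the positive factors $x_0$ and $x_1$ gives $y_2=x_1\le y_0$ and $x_0\le x_2$, and by induction $(x_{2n})_n$ is increasing and bounded above by $a_2$ while $(y_{2n})_n$ is decreasing and bounded below by $a_2$. Both subsequences converge, the recursions $x_{2n+2}=x_{2n+1}y_{2n+1}+c$, $y_{2n+2}=x_{2n}y_{2n}+c$ force the limits $l,l'$ to satisfy $l=l'l+c$ and $l'=ll'+c$, hence $l=l'$, $l=l^2+c$, and positivity pins down $l=a_2$. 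If you want to rescue your approach you would need to replace the ``overshoot'' step by something of this monotonicity type (or a genuine cone-field/graph-transform argument); as written, the plan has a real gap at its decisive step.
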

\begin{proof}
Let $z\in\bigcap_{n=0}^{+\infty}f^{-n}(A\cup H_2)$.	From Remark \ref{obsnova} it suffices to consider $z=(x_0,y_0)\in A$ such that $f^{2n-1}(x_0,y_0)=(x_{2n-1},y_{2n-1})\in H_2$ and $f^{2n}(x_0,y_0)=(x_{2n},y_{2n})\in A$, for all integer $n\geq 1$. Thus,
	\begin{center}
		$0\leq x_0\leq a_2$, $a_2\leq y_0<+\infty$,
	\end{center}
	\begin{equation}
	a_2-c\leq x_0y_0<+\infty, \textrm{  } -c\leq x_0x_1\leq a_2-c, \textrm{  } a_2-c\leq x_2x_1<+\infty. \label{14}
	\end{equation}
	By relation (\ref{14}), we have that $x_0x_1\leq a_2-c\leq x_0y_0$ and $x_0x_1\leq a_2-c\leq x_2x_1$, i.e. $y_2=x_1\leq y_0$ and $x_0\leq x_2$.
	
	Hence, by induction we deduce that $(x_{2n})_n$ is a convergent increasing sequence and $(y_{2n})_n$ is a convergent decreasing sequence. Let $l=\lim x_{2n}=\lim y_{2n+1}$ and $l'=\lim y_{2n}=\lim x_{2n-1}$. Since $x_{2n+2}=x_{2n+1}y_{2n+1}+c$ and $y_{2n+2}=x_{2n+1}=x_{2n}y_{2n}+c$, it follows that $l=l'l+c$ and $l'=ll'+c$, i.e. $l=l'$. Therefore, we have that $l=l^2+c$, i.e. $l\in\{a_1,a_2\}$. Since $l>0$, it follows that $l=a_2$, i.e. $\lim_{n\to+\infty}f^n(x_0,y_0)=(a_2,a_2)=\theta$.
\end{proof}

\begin{lemma}
	If $z\in H_1\setminus \{(a_2,a_2)\}$, then there exists $N\in\mathbb{N}$ such that $f^N(z)\notin H_1$. \label{proph1}
\end{lemma}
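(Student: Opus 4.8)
The plan is to argue by contradiction: suppose $f^n(z)\in H_1$ for every $n$ and show this forces $z=(a_2,a_2)$. Write $f^n(z)=(x_n,y_n)$. Since $H_1=[1+c,a_2]\times[0,a_2]$ and $f(x,y)=(xy+c,x)$, staying in $H_1$ means precisely that $x_n\in[1+c,a_2]$ for all $n\ge0$, together with $y_n=x_{n-1}$ for $n\ge1$; hence the orbit is governed by the one-dimensional recursion $x_{n+1}=x_nx_{n-1}+c$ for $n\ge1$, with all $x_n\in[1+c,a_2]$ and $x_1=x_0y_0+c$.

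First I would pass to the variable $u_n:=a_2-x_n$, which lies in $[0,\beta]$ with $\beta:=a_2-1-c$; note $\beta=a_2^2-1$ by the fixed-point relation $a_2^2+c=a_2$. Substituting $x_n=a_2-u_n$ turns the recursion into $u_{n+1}=a_2(u_n+u_{n-1})-u_nu_{n-1}$, so that $u_{n+1}-u_n=(a_2-1)u_n+(a_2-u_n)u_{n-1}$. The hypothesis $-1<c<0$ gives $1<a_2<\frac{1+\sqrt5}{2}$, whence $a_2-1>0$ and $\beta<a_2$, so $a_2-u_n\ge a_2-\beta=1+c>0$; both terms on the right-hand side are therefore nonnegative, and $(u_n)_{n\ge1}$ is a nondecreasing sequence bounded above by $\beta$. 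This monotonicity is the heart of the argument.

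Finally, since $(u_n)_{n\ge1}$ is nondecreasing and bounded it converges to $\mu:=\sup_{n\ge1}u_n\in[0,\beta]$, and letting $n\to\infty$ in the recursion gives $\mu=2a_2\mu-\mu^2$, i.e. $\mu\big(\mu-(2a_2-1)\big)=0$. Because $2a_2-1-\beta=a_2(2-a_2)>0$ we have $\mu\le\beta<2a_2-1$, forcing $\mu=0$; a nonnegative nondecreasing sequence with limit $0$ is identically $0$, so $x_n=a_2$ for all $n\ge1$. Plugging $x_1=x_2=a_2$ into $x_2=x_1x_0+c$ gives $x_0=\frac{a_2-c}{a_2}=a_2$, and then $x_1=x_0y_0+c=a_2$ gives $y_0=a_2$; thus $z=(a_2,a_2)$, contradicting $z\ne(a_2,a_2)$. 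I do not expect a genuine obstacle here: the only point needing care is recognizing that it is $a_2-x_n$, not $x_n$ itself, that is monotone along orbits trapped in $H_1$, after which everything reduces to the elementary inequalities $1<a_2<\frac{1+\sqrt5}{2}$ coming from $-1<c<0$. In particular this proof does not require Proposition \ref{proposition3}.
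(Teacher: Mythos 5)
Your proof is correct, and it takes a genuinely different (and arguably cleaner) route than the paper. The paper splits $H_1$ into an interior case and two boundary cases ($x_0=a_2$ or $y_0=a_2$); in the main case it majorizes the orbit by the scalar sequence $z_0=\max\{x_0,y_0\}<a_2$, $z_1=z_0^2+c$, $z_n=z_{n-1}z_{n-2}+c$, shows $(z_n)_n$ is decreasing and bounded below by $1+c$, and derives a contradiction because the limit would have to satisfy $l=l^2+c$ with $1+c\leq l<a_2$, which no root $a_1,a_2$ does. You instead work with the orbit itself: the substitution $u_n=a_2-x_n$ turns the two-step recursion $x_{n+1}=x_nx_{n-1}+c$ into $u_{n+1}=a_2(u_n+u_{n-1})-u_nu_{n-1}$, from which monotonicity of $(u_n)_{n\geq 1}$ is immediate, the limit equation $\mu(\mu-(2a_2-1))=0$ has no nonzero root below $\beta=a_2^2-1$, and you conclude that the only orbit trapped in $H_1$ forever is the fixed point itself. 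All the inequalities you invoke ($1<a_2<\tfrac{1+\sqrt5}{2}$, $a_2-\beta=1+c>0$, $2a_2-1-\beta=a_2(2-a_2)>0$) check out. Your approach buys two things: the boundary cases are absorbed automatically, and you get the slightly stronger conclusion identifying exactly which points never leave $H_1$ rather than merely a contradiction. The only presentational quibble: membership of the initial point in $H_1$ also imposes $y_0\in[0,a_2]$, which is not captured by ``$x_n\in[1+c,a_2]$ for all $n$''; since you only use the necessity direction of that equivalence, nothing in the argument is affected.
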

\begin{proof} Let $z=(x_0,y_0)\in H_1\setminus \{(a_2,a_2)\}$. We need to consider three cases.
	
	\noindent \textbf{Case 1:} $1+c \leq x_0<a_2$ and  $0 \leq y_0<a_2$.
	
	\noindent Suppose that $f^n(z)\in H_1$ for all $n\in\mathbb{N}$.
	
	Let $z_0=\max\{x_0,y_0\}<a_2$ and define the sequence $z_1=z_0^2+c$ and $z_n=z_{n-1}z_{n-2}+c$, for all integer $n\geq 2$. Thus, we have that
\begin{center}
		$1+c\leq x_0\leq z_0<a_2$, $0\leq y_0\leq z_0<a_2$,
		
		$1+c\leq x_1\leq \underbrace{z_0^2+c}_{=z_1}<a_2^2+c=a_1$ and $0\leq y_1\leq z_0<a_2$.
\end{center}
Hence, by induction we have that
\begin{center}	
		$1+c\leq x_n=x_{n-1}x_{n-2}+c\leq \underbrace{z_{n-1}z_{n-2}+c}_{=z_n}  <a_2^2+c=a_2 $ \newline and $0\leq y_n\leq z_{n-1}<a_2$,
	\end{center}
	
	\noindent for all integer $n\geq 2$. Furthermore, we have that $z_1=z_0^2+c<z_0$ since $a_1<0<1+c<z_0<a_2$. Thus, $z_2=z_1z_0+c<z_0^2+c=z_1$ and by induction on $k\geq 2$, we have that $z_{k+1}=z_kz_{k-1}+c<z_{k-1}z_{k-2}+c=z_k$.

Hence,
\begin{center}
$0<1+c\leq z_n<z_{n-1}<\ldots <z_1<z_0<a_2$,
\end{center}
 for all positive integer $n$, i.e. $(z_n)_n$ is a positive decreasing sequence converging for some $l\in[1+c,a_2[$. Since $z_n=z_{n-1}z_{n-2}+c$, it follows that $l=l^2+c$, i.e. $l\in\{a_1,a_2\}$. On the other hand, we have that $a_1<0<1+c\leq l\leq z_n<a_2$ for all $n\geq 0$, which is a contradiction.
	
	Therefore, there exists $N\in\mathbb{N}$ such that $f^N(z)\notin H_1$.
	
	\noindent \textbf{Case 2:} $x_0=a_2$ and $0\leq y_0<a_2$.
	
	In this case, $x_1=a_2y_0+c<a_2^2+c=a_2$, $y_1=a_2$, $x_2=x_1a_2+c<a_2^2+c=a_2$ and $y_2<a_2$. Hence, if $(x_2,y_2)\in H_1$, then we are done from Case 1 and if $(x_2,y_2)\notin H_1$, then the result is valid for $N=2$.
	
	\noindent \textbf{Case 3:} $1+c\leq x_0<a_2$ and $y_0=a_2$.
	
	In this case $x_1=x_0a_2+c<a_2^2+c=a_2$ and $y_1<a_2$. Hence, if $(x_1,y_1)\in H_1$, then we are done from Case 1 and if $(x_1,y_1)\notin H_1$,  then the result is valid for $N=1$.
\end{proof}

\begin{lemma}
	We have that $\displaystyle W^s(\theta)\subset \bigcap_{n=0}^{+\infty}f^{-n}(A\cup H_2)$. \label{propa3}
\end{lemma}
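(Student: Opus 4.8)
The plan is to show that every iterate $f^n(z)$ of a point $z\in W^s(\theta)$ lies in $A\cup H_2$, which is exactly the assertion $z\in\bigcap_{n\ge 0}f^{-n}(A\cup H_2)$. First I would record three properties of the forward orbit of $z$, which converges to $\theta=(a_2,a_2)$ and is in particular bounded. (i) By Proposition \ref{propinf}$(2)$ the orbit never enters $S$; since none of $p,f(p),f^2(p)$ lies in $L$, we have $L\setminus\{\theta\}\subset S$, so the orbit can meet $L$ only at $\theta$ itself. (ii) By Proposition \ref{propostion2} every point of $\mathcal{R}\setminus\{p,f(p),f^2(p)\}$ belongs to $W^s(\alpha)$, whereas $\{p,f(p),f^2(p)\}$ is a $3$-cycle; since $\alpha\neq\theta$ and the $3$-cycle does not converge, the orbit never enters $\mathcal{R}$. (iii) Because $f^n(z)\to(a_2,a_2)$ and $a_2>1+c>0$, there is $N_0$ with $f^n(z)\in\,]1+c,+\infty[\times]0,+\infty[$ for all $n\ge N_0$; comparing with the definitions in (\ref{notacao}) one checks that $]1+c,+\infty[\times]0,+\infty[\ \subset L\cup A\cup H_1\cup H_2$, so by (i) and $\theta\in A$ we obtain $f^n(z)\in A\cup H_1\cup H_2$, with first coordinate $>1+c$, for every $n\ge N_0$.

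The heart of the argument is to rule out the set $H_1\setminus(A\cup H_2)$ for large $n$. Suppose $f^{n_0}(z)\in H_1\setminus(A\cup H_2)$ for some $n_0\ge N_0$. Since $\theta\in A$, this iterate lies in $H_1\setminus\{(a_2,a_2)\}$, so Lemma \ref{proph1} yields a smallest $k\ge 1$ with $f^{n_0+k}(z)\notin H_1$, while $f^{n_0+k-1}(z)\in H_1$. By Lemma \ref{lema314}$(8)$ we have $f(H_1)\subset B\cup R_0\cup H_1$, hence $f^{n_0+k}(z)\in B\cup R_0$; but $n_0+k>N_0$ forces the first coordinate of $f^{n_0+k}(z)$ to be $>1+c$, which excludes $B$ (all of whose points have first coordinate $\le 0$). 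Thus $f^{n_0+k}(z)\in R_0\subset\mathcal{R}$, contradicting (ii). Therefore $f^n(z)\in(A\cup H_1\cup H_2)\setminus\big(H_1\setminus(A\cup H_2)\big)=A\cup H_2$ for every $n\ge N_0$.

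Finally I would propagate this back to $n=0$ using Remark \ref{obsnova}, which gives $f^{-1}(A)\subset H_2$ and $f^{-1}(H_2)\subset A$: if $f^{n+1}(z)\in A$ then $f^n(z)\in H_2$, and if $f^{n+1}(z)\in H_2$ then $f^n(z)\in A$, so a finite downward induction starting from $N_0$ yields $f^n(z)\in A\cup H_2$ for all $n\ge 0$, which is the claim. I expect the exclusion of $H_1$ in the second paragraph to be the main obstacle: it is the only place where one must combine Lemma \ref{proph1} (every orbit eventually leaves $H_1$), Lemma \ref{lema314}$(8)$ (where it goes when it does), and the fact that near $\theta$ the first coordinate stays above $1+c$, in order to be forced into $R_0$ and contradict (ii). Everything else is routine bookkeeping with the filtration (\ref{notacao}).
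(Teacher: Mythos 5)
Your argument is correct and rests on the same ingredients as the paper's proof: excluding $S$, $L$ and $\mathcal{R}$ via Propositions \ref{propinf} and \ref{propostion2}, eliminating $H_1$ via Lemmas \ref{proph1} and \ref{lema314}, and invoking Remark \ref{obsnova} to control the remaining iterates. The only difference is organizational: the paper argues by contradiction, using Remark \ref{obsnova} to propagate ``$f^n(z)\notin A\cup H_2$'' forward until the orbit is forced out of the first quadrant and away from $\theta$, whereas you pin the tail of the orbit in $A\cup H_2$ directly and then run the same remark backward to reach $n=0$.
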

\begin{proof}
Let $z\in W^s(\theta)$ and suppose that $f^{k_1} (z)\notin A\cup H_2$ for some $k_1\geq 0$. Thus, from Remark \ref{obsnova}, $f^n (z)\notin A\cup H_2$, for all $n\geq k_1$.

Since $z\in W^s(\theta)$,  from Propositions \ref{propinf} and \ref{propostion2} we have that $f^n(z)\notin L\cup \mathcal{R}$, for all $n\geq 0$.

If $z\in H_1$, then  from Lemma \ref{proph1} there exists $k_2>0$ such that $f^{k_2}(z)\notin H_1$.

Hence, from Proposition \ref{propinf} and Lemmas \ref{lema3} and \ref{lema314} there exists $k_3\geq\max\{k_1,k_2\}$ such that $(x_n,y_n)=f^n(z)\notin [0,+\infty[\times[0,+\infty [$, for all $n\geq 0$, i.e. $\min\{|x_n-a_2|,|y_n-a_2|\}>a_2>1$, for all $n\geq k_3$ which contradicts $z\in W^s(\theta)$.

Therefore, $f^n (z)\in A\cup H_2$, for all $n\geq 0$.
\end{proof}

From Lemmas \ref{propa2} and \ref{propa3}, we conclude the proof of Proposition \ref{proposition3}. \hfil $\Box$

\smallskip 

\subsection{Proof of Proposition \ref{proposition4}} \label{subsecwsp}
Denote by $\mathcal{H}=int (\mathcal{R}) \cup W^s(\theta)$ and let $V:=\mathcal{K}^+\setminus\bigcup_{n=0}^{+\infty}f^{-n}(\mathcal{H})$. It suffices to prove that $V= W^s(p)\cup W^s(f(p))\cup W^s(f^2(p))$.

In fact, first observe that $\{p,f(p),f^2(p)\}\subset V$, that is $V\neq\emptyset$.

Since $\mathcal{R}\setminus \{p,f(p),f^2(p)\} \subset W^s(\alpha)$ (by Proposition \ref{propostion2}) and $\alpha \in int(\mathcal{R})$, it follows that $V\subset A\cup B\cup C\cup D\cup E\cup F\cup G\cup H_1\cup H_2$.

For each $X\in\{A,B,C,D,E,F,G,H_1,H_2\}$, denote by $X'=X\cap V$. From Lemma $\ref{lema314}$, we have (see Figure \ref{fig6})
$$
\begin{array}{lll}
1. \textrm{ } f(A')\subset B'\cup H_1'\cup H_2'; & 4. \textrm{ } f(D')\subset E'\cup F'; &  7. \textrm{ } f(G')\subset B';\\
2. \textrm{ } f(B')\subset D'; & 5. \textrm{ } f(E')\subset G'; & 8. \textrm{ } f(H_1')\subset B'\cup H_1';  \\
3. \textrm{ } f(C')\subset E'; & 6. \textrm{ } f(F')\subset C'; & 9. \textrm{ } f(H_2')\subset B'\cup A'.
\end{array}
$$		
\begin{figure}[!h]
		\centering
		\includegraphics[scale=0.5]{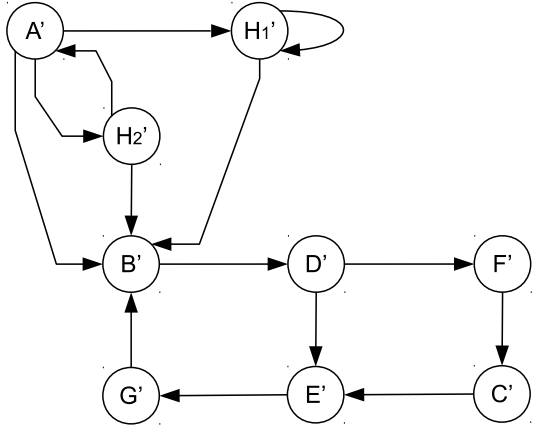}
		\caption{Cycles in the filtration.} \label{fig6}
	\end{figure}
	
	If $z\in A'\cup H_1'\cup H_2'$, then from Lemmas \ref{propa2} and \ref{proph1} there exists $N_1\in\mathbb{N}$ such that $f^{N_1}(z)\in B'$.
	
	If $z\in B'\cup C'\cup D'\cup E'\cup F'\cup G'$, then by Figure \ref{fig6} we can see that there exists $N_2\in\mathbb{N}$ such that $f^{N_2}(z)\in G'$. Thus, if $z\in G'\subset G$, then
\begin{center} 	
	$f(z)\in f(G)\cap B\subset [-1,c]\times[1+c,+\infty[$,
	$f^2(z)\in(]-\infty,c(1+c)+c]\times[-1,c])\cap D\subset (]-\infty.-1]\times[-1,c])$,
	$f^3(z)\in [0,+\infty[\times]-\infty,-1]\cap (E\cup F)\subset F$, $f^4(z)\in C$, $f^5(z)\in E$
\end{center} 
and as before, we have $f^{6n}(z)\in G'$, $f^{6n+1}(z)\in B'$, $f^{6n+2}(z)\in D'$, $f^{6n+3}(z)\in F'$, $f^{6n+4}(z)\in C'$, $f^{6n+5}(z)\in E'$, for all $n\geq 1$.
	
	Therefore, it suffices to study the cycle $G'B'D'F'C'E'G'$ (see Figure \ref{fig6f}). Without loss of generality, consider $z=(x_0,y_0)\in F'$. Thus, for all integer $n\geq 0$, we have $f^{6n}(z)\in F$, $f^{6n+1}(z)\in C$, $f^{6n+2}(z)\in E$, $f^{6n+3}(z)\in G$, $f^{6n+4}(z)\in B$, $f^{6n+5}(z)\in D$. Hence, we have that
	
	\begin{figure}[!h]
		\centering
		\includegraphics[scale=0.5]{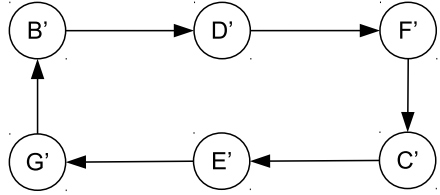}
		\caption{Cycle of the filtration.} \label{fig6f}
	\end{figure}
	\begin{equation}
	0\leq x_0\leq 1+c, \textrm{   } -\infty <y_0\leq -1, \label{17}
	\end{equation}
	\begin{equation}
	-\infty< x_0y_0\leq -1-c, \textrm{   } -1-c \leq x_1x_0\leq -c, \label{18}
	\end{equation}
	\begin{equation}
	-1-c \leq x_1x_0\leq -c, \textrm{   } 1 \leq x_2x_1<+\infty, \label{19}
	\end{equation}
	\begin{equation}
	-1-c \leq x_3x_2\leq -c, \textrm{   } -\infty < x_4x_3\leq-1-c, \label{20}
	\end{equation}
	\begin{equation}
	-c\leq x_5x_4\leq 1, \textrm{   } -\infty <x_6x_5\leq -1-c. \label{21}
	\end{equation}
	
	By relations (\ref{18}) and (\ref{20}), we have that $x_0y_0\leq x_0x_1$ and $x_3x_4\leq x_3x_2$, i.e. $y_0\leq x_1=y_2$ and $x_4\leq x_2\leq 0$.
	
	Since $x_4\leq x_2\leq 0$ and $x_1\leq 0$, it follows that $x_1x_4\geq x_1x_2$. Thus, by relations (\ref{19}) and (\ref{21}), we have that $x_5x_4\leq x_2x_1\leq x_1x_4$ and since $x_4<0$, it follows that $x_1\leq x_5$. Hence, $y_0\leq x_1\leq x_5=y_6$.
	
	By relations (\ref{19}) and (\ref{21}), we have that $x_5x_6\leq x_1x_0$. Since $x_1\leq x_5\leq -1$ and $x_0\geq 0$, it follows that $x_0x_1\leq x_0x_5$. Therefore, $x_5x_6\leq x_1x_0\leq x_0x_5$, i.e. $x_6\geq x_0$.
	
With this, we conclude that $(x_{6n})_n$ and $(y_{6n})_n$ are two bounded increasing sequences.
	
	\noindent \textbf{Claim:} $\displaystyle\lim_{n\to+\infty}f^{6n}(x_0,y_0)=(1+c,-1)=f(p)$.
	
	\noindent In fact, let $\lim_{n\to+\infty} x_{6n}=l$ and $\lim_{n\to+\infty} y_{6n}= l'$. Thus, $f^6(l,l')=(l,l')$. Suppose that $(l,l')\neq (1+c,-1)$ and let $(a,b)=f^6(l,l')$. In the same way that we proved that $x_0\leq x_6$ and $y_0\leq y_6$, we have $l\leq a$ and $l'\leq b$. Since $0\leq l<1+c$ or $-\infty<l'<-1$, we deduce that $l<a$ or $l'<b$, i.e. $f^6(l,l')\neq (l,l')$, which is an absurd. Therefore, $(l,l')= (1+c,-1)=f(p)$ and we obtain the claim.

Hence, since $(1+c,-1)=f(p)$, $(-1,1+c)=f^2(p)$ and $(-1,-1)=p$ is the only $3-$cycle of $f$, from Claim and continuity of $f$ we have
$$
\begin{array}{l}
\displaystyle\lim_{n\to+\infty}f^{6n}(x_0,y_0)=\lim_{n\to+\infty}f^{6n+3}(x_0,y_0)=(1+c,-1)=f(p), \\
\displaystyle\lim_{n\to+\infty}f^{6n+1}(x_0,y_0)=\lim_{n\to+\infty}f^{6n+4}(x_0,y_0)=(-1,1+c)=f^2(p) \textrm{ and} \\
\displaystyle\lim_{n\to+\infty}f^{6n+2}(x_0,y_0)=\lim_{n\to+\infty}f^{6n+5}(x_0,y_0)=(-1,-1)=p,
\end{array}
$$
i.e.

\begin{center}
$\displaystyle\lim_{n\to+\infty}f^{3n}(x_0,y_0)=f(p)$,
$\displaystyle\lim_{n\to+\infty}f^{3n+1}(x_0,y_0)=f^2(p)$ and
$\displaystyle\lim_{n\to+\infty}f^{3n+2}(x_0,y_0)=p$,
\end{center}
which ends the proof of Proposition \ref{proposition4}. \hfill $\Box$

\section{Description of $\mathcal{K}^-$}
\label{secaokmenos}

Now, consider the set $\mathcal{K}^-$ defined by

\smallskip 

\noindent $\mathcal{K}^-:=\left\{(x,y)\in\mathbb{R}^2:f^{-n}(x,y) \textrm{ exists for all } n\in\mathbb{N}\right.$

\hfill $\left. \textrm{ and } (f^{-n}(x,y))_{n\geq 0} \textrm{ is bounded}\right\}$.

\smallskip 

We can see some cases of $\mathcal{K}^-$ in Figure \ref{kmenos}.

\begin{figure}[!h]
	\centering
	\includegraphics[scale=0.29]{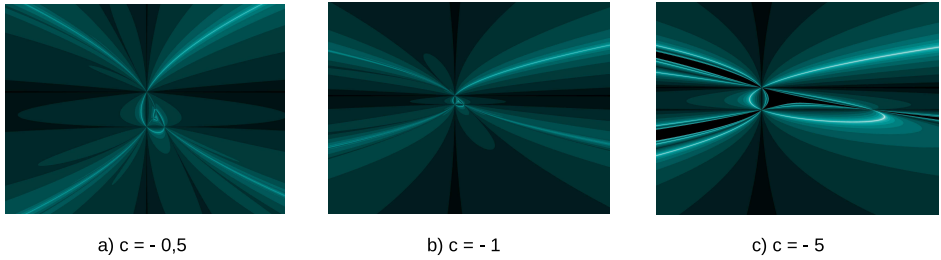}
	\caption{The set $\mathcal{K}^-$ in the cases: $(a)$ $c=-0.5$, $(b)$ $c=-1$, $(c)$ $c=-5$.} \label{kmenos}
\end{figure}

In the same spirit at \cite{bcm} for $0<c<\frac{1}{4}$, we will prove the following Theorem.
	
\begin{theorem} \label{lemaaaziavolta}
	If $-1< c<0$, then $\mathcal{K}^-=W^u(\theta)\cup W^u(p)\cup
	W^u(f(p))\cup W^u(f^2(p))$, where $W^u(x)$ is the unstable manifold of $x$, for all $x\in\{\theta,p,f(p),$ $f^2(p)\}$.
\end{theorem}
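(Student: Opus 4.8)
The plan is to mirror the strategy used for $\mathcal{K}^+$ but working with the inverse map. First I would note that $f$ is invertible off the line $\{x=0\}$: from $f(x,y)=(xy+c,x)=(u,v)$ we recover $(x,y)=\left(v,\frac{u-c}{v}\right)$, so $f^{-1}(u,v)=\left(v,\frac{u-c}{v}\right)$ is defined precisely when $v\neq 0$. Thus a point belongs to $\mathcal{K}^-$ only if its whole backward orbit avoids $\{x=0\}$ and stays bounded. The key structural observation is a duality: the map $g(x,y):=(y,x)\circ f\circ (y,x)$ conjugates $f^{-1}$ (where defined) to a map of the same algebraic shape, or more directly, one checks that $f^{-n}$ orbits of $f$ correspond to $f^n$ orbits of a conjugate system, so that $W^u$-manifolds for $f$ become $W^s$-manifolds for the conjugate. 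Concretely, I expect to establish $\mathcal{K}^-=\sigma(\mathcal{K}^+)$ for an explicit linear involution $\sigma$ (the coordinate swap combined with the symmetry of the defining equation), which would let Theorem \ref{teo34} be transported directly, since $\sigma$ sends $\alpha,\theta,p,f(p),f^2(p)$ to themselves or permutes the $3$-cycle, and sends stable manifolds of $f$ to unstable manifolds of $f$.

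If that clean conjugacy is not available, the fallback is to redo the filtration argument directly for $f^{-1}$. I would set up the backward-invariant regions: the images under $\sigma$ (or reflection) of $L,M,N,P$ give a filtration $S'$ whose points escape to infinity under backward iteration; the key is to prove the analogue of Proposition \ref{propinf}, namely $f^{-1}$ permutes the ``outer'' pieces cyclically and $\|f^{-n}(z)\|_\infty\to\infty$ off a bounded core. Then, inside the bounded core, I would show every backward orbit that does not hit $\{x=0\}$ accumulates on one of the fixed/periodic points, and — this is the crucial difference from the $\mathcal{K}^+$ side — that the attracting fixed point $\alpha$ becomes a repeller for $f^{-1}$, so $\alpha$ contributes nothing to $\mathcal{K}^-$; only $\theta$ (a saddle, hence still a saddle for $f^{-1}$) and the $3$-cycle (also a saddle cycle) survive. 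This matches the statement, which omits $W^u(\alpha)$.

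The main steps, in order: (1) write down $f^{-1}$ explicitly and identify its domain; (2) establish the symmetry $\mathcal{K}^-=\sigma(\mathcal{K}^+)$ or, failing a literal conjugacy, set up the reflected filtration and prove the escape lemma (analogue of Proposition \ref{propinf}) for $f^{-1}$; (3) show the bounded part of $\mathbb{R}^2$ not in $\mathcal{K}^-$ consists of the $f^{-1}$-preimages of the ``escaping'' pieces together with the basin-like region around $\alpha$ that escapes under $f^{-1}$; (4) characterize the surviving set as the backward orbits trapped near $\{\theta\}$ and near the $3$-cycle, identifying these trapped sets with $W^u(\theta)$ and $W^u(p)\cup W^u(f(p))\cup W^u(f^2(p))$ via the definition of unstable manifold (a point $z$ is in $W^u(\tau)$ iff $d(f^{-n}(z),f^{-n}(\tau))\to 0$); (5) assemble the two inclusions $\mathcal{K}^-\subset W^u(\theta)\cup W^u(p)\cup W^u(f(p))\cup W^u(f^2(p))$ and the reverse, the latter being essentially immediate since points on these unstable manifolds have bounded backward orbits by definition.

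I expect the main obstacle to be step (2) in the case where no exact conjugacy $\sigma$ works: the inverse map $f^{-1}(u,v)=(v,(u-c)/v)$ is a rational, not polynomial, map, so its dynamics near $\{v=0\}$ and at infinity must be analyzed more carefully than in the forward case — in particular one must rule out backward orbits that stay bounded only by virtue of repeatedly passing close to (but never hitting) the singular line $\{x=0\}$, and one must handle the possibility that the spiral behavior of the forward dynamics near $\alpha$ becomes an expanding spiral for $f^{-1}$ that could in principle interact with the singular locus. Once the escape lemma is in hand, the remaining arguments are the same kind of interval-arithmetic estimates already carried out in Section \ref{secaokmais}, applied to the six-step cycle $G'B'D'F'C'E'$ read backwards, and should go through with only bookkeeping changes.
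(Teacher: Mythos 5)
Your primary route cannot work. The symmetry $\mathcal{K}^-=\sigma(\mathcal{K}^+)$ for a plane involution $\sigma$ is impossible for this family: by Theorem \ref{teo34} and Proposition \ref{propostion2}, $\mathcal{K}^+$ contains $int(\mathcal{R})\subset W^s(\alpha)$ and so has nonempty interior, whereas $\mathcal{K}^-$ is a union of one‑dimensional unstable manifolds (the attracting point $\alpha$ contributes nothing, as you yourself note), so no homeomorphism of the plane carries one onto the other. The underlying reason is that $f$ is a genuinely non‑invertible endomorphism --- it collapses the line $\{x=0\}$ to the point $(c,0)$ --- so the H\'enon‑type duality between $f$ and $f^{-1}$ that you are implicitly invoking is not available. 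This leaves only your fallback, which is indeed the strategy of the paper (a backward filtration, Lemma \ref{prop317}, followed by region‑by‑region monotone‑sequence arguments in Proposition \ref{propkmenos}).

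But your fallback is set up with the regions playing the wrong roles, so the ``escape lemma'' you propose as the analogue of Proposition \ref{propinf} is false as stated. Under $f^{-1}$ the outer pieces $L$ and $M\cup N\cup P$ are \emph{not} escaping regions: backward orbits trapped in $L$ converge monotonically to $\theta$ (item $3$ of Proposition \ref{propkmenos}), and backward orbits trapped in $M\cup N\cup P$ converge to the $3$‑cycle (item $4$); these outer regions are precisely where large parts of $W^u(\theta)$ and $W^u(p)\cup W^u(f(p))\cup W^u(f^2(p))$ live. The divergence under $f^{-1}$ occurs instead in the middle ring $Z'$: the two‑cycle $A\leftrightarrow H_2$ and the six‑cycle $F\to D\to B\to G\to E\to C$ push backward orbits to infinity, and the basin $Y$ of $\alpha$ is expelled backward into that ring (Lemma \ref{propmula}), which is how $W^s(\alpha)$ disappears from $\mathcal{K}^-$. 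Beyond this inversion of the picture, the actual content of the proof --- the separate monotonicity and limit‑identification arguments in $L$, in $M\cup N\cup P$, in $H_1$, in the $A\leftrightarrow H_2$ and $FDBGEC$ cycles, and in $\mathcal{R}^-$ (where one must also rule out $R_0$ via Lemma \ref{lemarzeromenos} and control the approach to the singular line $\{x=0\}$) --- is not supplied by your sketch, so as it stands the proposal identifies the right generic framework but not the lemmas that make it work.
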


For the proof of Theorem \ref{lemaaaziavolta}, we need the following notations (see (\ref{notacao}) for the notations):
\begin{equation} \label{notacaomenos}
\begin{array}{l}
Z'=A\cup B\cup C\cup D\cup E\cup F\cup G\cup H_1\cup H_2, \\
Z=Z'\setminus\{\theta,p,f(p),f^2(p)\}, \\
\mathcal{R}^-=\{z\in \mathcal{R}: f^{-n}(z)\in\mathcal{R}, \textrm{ for all } n\in\mathbb{N}\}.
\end{array}
\end{equation}

We conclude the proof of Theorem \ref{lemaaaziavolta} from the following proposition.

\begin{proposition} \label{propkmenos} 
If $-1<c<0$, then the following assertions holds:

\noindent $1.$ $Z\cap \mathcal{K}^{-}\subset W^u(\theta)$;

\noindent $2.$ $\mathcal{R}^-\subset W^u(p)\cup W^u(f(p))\cup W^u(f^2(p))$;

\noindent $3.$ $L\cap \mathcal{K}^-\subset W^u(\theta)$;

\noindent $4.$ $(M\cup N\cup P)\cap \mathcal{K}^-\subset W^u(\theta)\cup W^u(p)\cup W^u(f(p))\cup W^u(f^2(p))$.
\end{proposition}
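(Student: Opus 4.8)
The plan is to mirror the forward filtration analysis of Section~\ref{secaokmais} under time reversal: a point with bounded backward orbit is carried by $f^{-1}$ through the filtration~(\ref{notacao}) and is forced, under iteration of $f^{-1}$, towards one of the saddle sets $\theta$ or $\{p,f(p),f^2(p)\}$, with the convergence in each region established by monotonicity estimates dual to those already proved. I would first record the preimage relations obtained by reading Lemmas~\ref{lema3} and~\ref{lema314} and Proposition~\ref{propinf}(1) backwards (the pieces in~(\ref{notacao}) covering $\mathbb{R}^2$ up to their common boundaries): $f^{-1}(A)\subseteq H_2$ and $f^{-1}(H_2)\subseteq A$ (Remark~\ref{obsnova}), $f^{-1}(H_1)\subseteq A\cup H_1$, $f^{-1}(B)\subseteq A\cup G\cup H_1\cup H_2$, $f^{-1}(C)\subseteq F$, $f^{-1}(D)\subseteq B$, $f^{-1}(E)\subseteq C\cup D$, $f^{-1}(F)\subseteq D$, $f^{-1}(G)\subseteq E$, $f^{-1}(L)\subseteq L\cup H_2$, $f^{-1}(M)\subseteq P\cup G$, $f^{-1}(N)\subseteq M\cup C$, $f^{-1}(P)\subseteq N\cup D$, and, inside $\mathcal{R}$, $f^{-1}(R_0)\subseteq R_0$, $f^{-1}(R_1)\subseteq R_0\cup R_3$, $f^{-1}(R_2)\subseteq R_1\cup R_2$, $f^{-1}(R_3)\subseteq R_2$. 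Hence $f^{-1}(Z')\subseteq Z'$ and $f^{-1}(\mathcal{R})\subseteq\mathcal{R}\cup Z'$; the sets $W^u(\theta),W^u(p),W^u(f(p)),W^u(f^2(p))$ are forward invariant; $f^{-1}$ is a local diffeomorphism at $\theta$ and at each $3$-periodic point (their second coordinates are nonzero), so that convergence of a backward orbit to one of these points in the correct phase puts the starting point in its unstable manifold; and each of $\theta,p,f(p),f^2(p)$ lies in its own unstable manifold, so in every part it suffices to treat a non-periodic point (and, in part~(2), one different from $\alpha$).

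Part~(3) is the cleanest and I would use it as the model. For $z\in L\cap\mathcal{K}^-$, write $w_n=f^{-n}(z)=(x_n,y_n)$. Since $f^{-1}(L)\subseteq L\cup H_2$, either some $w_k\in H_2\subseteq Z'$ --- then $w_k\in Z'\cap\mathcal{K}^-$, part~(1) gives $w_k\in W^u(\theta)$, and forward invariance gives $z\in W^u(\theta)$ --- or $w_n\in L$ for all $n$; in the latter case $x_{n+1}=y_n$ and $y_ny_{n+1}=y_{n-1}-c$, so, using $a_2^2=a_2-c$, the numbers $u_n:=y_n-a_2\ge0$ satisfy $u_{n-1}=a_2u_n+a_2u_{n+1}+u_nu_{n+1}\ge a_2u_n$, whence $0\le u_n\le a_2^{-n}u_0\to0$ (as $a_2>1$), so $w_n\to\theta$. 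Part~(1) carries the main weight. Given $z\in Z\cap\mathcal{K}^-$, the bounded backward orbit stays in $Z'$. From the preimage relations it either stays forever in the six pieces $B,C,D,E,F,G$, cycling backwards $F\to D\to B\to G\to E\to C\to F$, or at some moment passes from $B$ into $A\cup H_1\cup H_2$. The first alternative is impossible: applying the monotonicity inequalities from the proof of Proposition~\ref{proposition4} to six consecutive iterates makes the subsequence $f^{-6n}(z)$ monotone in each coordinate, hence convergent (the orbit being bounded) to a point fixed by $f^6$, i.e. to one of $p,f(p),f^2(p)$ lying on the boundary of the relevant piece, and then the monotonicity together with that boundary position forces $f^{-6n}(z)$ to equal that $3$-periodic point for all $n$, so $z$ is $3$-periodic, contradicting $z\in Z$. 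In the second alternative the orbit stays in $A\cup H_1\cup H_2$ ($f^{-1}(A)\subseteq H_2$, $f^{-1}(H_2)\subseteq A$, $f^{-1}(H_1)\subseteq A\cup H_1$): if it remains forever in the compact set $H_1$ then, since by Lemma~\ref{proph1} every point of $H_1\setminus\{\theta\}$ leaves $H_1$ in forward time, no accumulation point other than $\theta$ is compatible with the backward orbit remaining in $H_1$, so $w_n\to\theta$; otherwise it enters $A$ and alternates thereafter in $A$ and $H_2$, where the condition of never leaving $A\cup H_2$ yields (as in Lemma~\ref{propa2}) inequalities $x_{2k+1}x_{2k+2}\le a_2-c\le x_{2k+2}x_{2k+3}$, making $(x_{2n})$ and $(x_{2n+1})$ monotone, bounded, with common limit $l$ solving $l=l^2+c$, $l>0$, so $l=a_2$ and $w_n\to\theta$. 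In all cases $z\in W^u(\theta)$.

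Parts~(2) and~(4) follow the same template. For $z\in\mathcal{R}^-$ (not $\alpha$, not $3$-periodic) the backward orbit lies in the compact set $\mathcal{R}$; it cannot remain in $R_0$, since by Lemma~\ref{lema33} the forward images of $R_0$ eventually lie in $\mathcal{R}'$ whose intersection with $R_0$ a well-defined backward orbit cannot reach; and since every point of $\mathcal{R}\setminus\{p,f(p),f^2(p)\}$ is in $W^s(\alpha)$ (Proposition~\ref{propostion2}) while $\alpha$ is attracting, an accumulation point of the backward orbit other than $p,f(p),f^2(p)$ would --- using an absorbing neighbourhood of $\alpha$ and $f(w_n)=w_{n-1}$ --- force $z=\alpha$, which is excluded; so the accumulation points lie on the $3$-cycle, and the backward cycle $R_1\to R_3\to R_2\to R_1$ with a monotonicity argument in the spirit of Lemmas~\ref{lemaazia0}--\ref{lemaazia1} pins the orbit down to converge to it, giving $z\in W^u(p)\cup W^u(f(p))\cup W^u(f^2(p))$. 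For $z\in(M\cup N\cup P)\cap\mathcal{K}^-$: either the backward orbit leaves $M\cup N\cup P$, necessarily into $C\cup D\cup G\subseteq Z'$, and part~(1) with forward invariance gives $z\in W^u(\theta)$; or it stays forever in $M\cup N\cup P$, where boundedness confines it near the $3$-periodic corners $f^2(p)\in M$, $p\in N$, $f(p)\in P$, and a monotonicity argument as above yields convergence to the $3$-cycle. Together with $\mathbb{R}^2=\mathcal{R}\cup Z'\cup L\cup M\cup N\cup P$, $f^{-1}(\mathcal{R})\subseteq\mathcal{R}\cup Z'$ and the easy inclusion $W^u(\theta)\cup W^u(p)\cup W^u(f(p))\cup W^u(f^2(p))\subseteq\mathcal{K}^-$, these four parts yield Theorem~\ref{lemaaaziavolta}.

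I expect the delicate points to be the two ``no trapping'' steps --- excluding a bounded backward orbit that stays forever in the six-piece cycle (part~(1)) or forever in $R_0$ (part~(2)) --- which require isolating the right monotone quantity from the filtration constraints and then running the limit / strict-monotonicity argument exactly as in Proposition~\ref{proposition4}; the remaining convergence estimates should be routine time-reversals of Lemmas~\ref{propa2} and~\ref{lemaazia0}--\ref{lemaazia1}, while checking that the preimage relations are clean away from the shared boundaries of adjacent pieces needs a brief verification but causes no real trouble, all the periodic and fixed points lying on those boundaries.
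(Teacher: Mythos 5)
Your overall route is the paper's: you build the same preimage table (the paper's Lemma \ref{prop317}), split item $1$ into the same three regimes (backward orbit trapped in the six-piece cycle, trapped in $H_1$, or alternating in $A\cup H_2$), exclude $R_0$ and reduce item $2$ to the backward cycle $R_2\to R_1\to R_3\to R_2$, and treat items $3$ and $4$ by the trapped-in-$L$ and trapped-in-$M\cup N\cup P$ alternatives with the same monotone-products trick. Where you differ you generally do better: your accumulation-point argument for the $H_1$ case (compactness of $H_1$ plus Lemma \ref{proph1}) replaces the paper's explicit iterated bounds; your basin-of-$\alpha$ argument in item $2$ (an accumulation point in $\mathcal{R}\setminus\{p,f(p),f^2(p)\}$ lies in $W^s(\alpha)$ and, via an absorbing neighbourhood, would force $z=\alpha$) is a genuine shortcut past the paper's Lemmas \ref{lemarzeromenos}--\ref{mula} and the $g^{-1}$-orbit estimates, provided you finish it with the phase-coherence observation that the three cycle points lie in distinct pieces $R_1,R_2,R_3$; and your estimate $u_{n-1}\geq a_2u_n$ in item $3$ is a tidy repackaging of the paper's nested rectangles $A_j$.

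Two concrete problems remain. First, in the six-piece regime of item $1$ you assert the backward itinerary is the $6$-cycle $F\to D\to B\to G\to E\to C\to F$, but the preimage relation $f^{-1}(E)\subset C\cup D$ also permits the backward $4$-cycle $E\to D\to B\to G\to E$, to which the sixfold monotonicity inequalities of Proposition \ref{proposition4} do not apply; the paper must rule it out by a direct computation ($w\in G$, $f(w)\in B$, $f^2(w)\in D$ forces $f^3(w)\in\left]0,+\infty\right[\times\left]-\infty,-1\right[$, hence $f^3(w)\notin E$), and your sketch omits this step. Second, your conclusion that a backward orbit alternating in $A\cup H_2$ converges to $\theta$ has the dynamics backwards: $A\cup H_2$ carries $W^s(\theta)$, and the paper's Case $3$ shows that for $z\neq\theta$ the sequences $(x_{-2n})$, $(y_{-2n})$ are strictly monotone \emph{away} from $a_2$, so the backward orbit diverges to $(0,+\infty)$ and such points are simply not in $\mathcal{K}^-$. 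Your conditional argument (monotone plus bounded implies convergence to the fixed point) still yields the inclusion, but only vacuously --- under your hypotheses the monotonicity forces every term to equal $a_2$, i.e.\ $z=\theta$ --- and you should either say so or, as the paper does, prove divergence outright, since the same slip made in a setting where the statement were an equality of sets would be fatal.
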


The proof of items $1$, $2$, $3$ and $4$ of Proposition \ref{propkmenos} will be displayed in Subsections \ref{umpropkmenos}, \ref{doispropkmenos}, \ref{trespropkmenos} and \ref{quatropropkmenos} respectively.

For the proof of Proposition \ref{propkmenos}, we need the following lemma.

\begin{lemma} If $-1<c<0$, then the following relations are valid:
$$
	\begin{array}{lllll}
	1.	& f^{-1}(A) \subset H_2;                       & \hspace{.2cm}  & 10. & f^{-1}(R_0) \subset  (R_0\cup A\cup H_1);        \\
	2.	& f^{-1}(B) \subset (A\cup G\cup H_1\cup H_2); &           & 11. & f^{-1}(R_1) \subset  (R_0\cup R_3\cup A\cup F);  \\
	3.	& f^{-1}(C) \subset F;                         &                           & 12. & f^{-1}(R_2) \subset  (R_1\cup R_2\cup B\cup E);  \\
	4.	& f^{-1}(D) \subset B;                         &                           & 13. & f^{-1}(R_3) \subset  (R_2\cup E);                \\
	5.	& f^{-1}(E) \subset (C\cup D);                 &                       & 14. & f^{-1}(L) \subset    (L\cup H_2);                \\
	6.	& f^{-1}(F) \subset D;                         &                            & 15. & f^{-1}(M) \subset    (P\cup G);                  \\
	7.	& f^{-1}(G) \subset E;                         &                            & 16. & f^{-1}(N) \subset    (M\cup C);                  \\
	8.	& f^{-1}(H_1) \subset (A\cup H_1);             &                     & 17. & f^{-1}(P) \subset    (N\cup D).                  \\
	9.	& f^{-1}(H_2) \subset A;                       &                           &                   \\
	\end{array}
	$$ \label{prop317}
\end{lemma}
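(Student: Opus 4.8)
The plan is to use the explicit description of the preimage of an axis-parallel rectangle under $f$. Since $f(x,y)=(xy+c,x)$, a point $(x,y)$ satisfies $f(x,y)\in I_1\times I_2$ if and only if $x\in I_2$ and $xy\in I_1-c$, where $I_1-c:=\{t-c:\,t\in I_1\}$. Hence, for $x\neq 0$ the second coordinate is forced into the interval $(I_1-c)/x$ (with the order of its endpoints reversed when $x<0$), while a point with $x=0$ lies in $f^{-1}(I_1\times I_2)$ precisely when $0\in I_2$ and $c\in I_1$. Every region appearing in (\ref{notacao}) is such a rectangle, so each of the seventeen inclusions reduces to a bounded amount of interval arithmetic together with a case split on the sign of $x$.

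In carrying this out I would systematically use the relations $a_1+a_2=1$, $a_1a_2=c$ and $a_2^2=a_2-c$ --- so that $(a_2-c)/a_2=a_2$ and $-c/a_2=-a_1=a_2-1$ --- together with the standing hypothesis $-1<c<0$, which gives $0<1+c<1$, $-1<a_1<0<a_2<2$ and $-a_1<a_2$. For example, for item $14$, if $f(x,y)\in L=[a_2,+\infty[\times[a_2,+\infty[$, then $x\geq a_2$ and $xy\geq a_2-c=a_2^2>0$, so $y\geq a_2^2/x>0$ and $(x,y)\in[a_2,+\infty[\times\,]0,+\infty[\,\subset L\cup H_2$; the remaining unbounded items $15$, $16$, $17$ are treated identically, and the bounded items $1$--$13$ are entirely analogous, splitting according to whether $x\geq 0$ or $x<0$ and comparing the endpoints of $(I_1-c)/x$ with $a_1$, $a_2$, $0$, $1+c$ by means of the identities above. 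One point I would flag explicitly is that the vertical fiber $\{0\}\times\mathbb{R}$ lies in the preimage in exactly two cases, namely $f^{-1}(R_1)$ and $f^{-1}(R_2)$ (because $c\in[-1,0]$ and $0$ belongs to the $y$-range of $R_1$ and of $R_2$); in both cases the stated right-hand side already contains this line, since there the union of the $y$-ranges of the target regions is all of $\mathbb{R}$ while $0$ lies in their common $x$-range.

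The routine but unavoidable bookkeeping is the main cost of the argument. The items most prone to error, and the ones I expect to be the real obstacle, are those whose right-hand side has three or four pieces --- $f^{-1}(B)\subset A\cup G\cup H_1\cup H_2$, $f^{-1}(R_0)\subset R_0\cup A\cup H_1$, $f^{-1}(R_1)\subset R_0\cup R_3\cup A\cup F$ and $f^{-1}(R_2)\subset R_1\cup R_2\cup B\cup E$ --- where one must check that, as $x$ runs over the relevant interval, the varying $y$-window $(I_1-c)/x$ is covered by the several target pieces with no gap appearing near the corners where two of those pieces meet (the boundaries $y=0$, $y=1+c$, $y=a_2$). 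These corner checks are exactly of the type carried out in the corresponding computations of \cite{bcm}, and amount to a finite list of inequalities among $a_1$, $a_2$ and $1+c$ that follow from $-1<c<0$ and $a_1^2-a_1+c=a_2^2-a_2+c=0$.
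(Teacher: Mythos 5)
Your proposal is correct, but it takes a genuinely different route from the paper. You compute $f^{-1}$ of each rectangle directly, via $x\in I_2$ and $y\in (I_1-c)/x$ plus a sign split on $x$ and an explicit treatment of the fiber $x=0$; the paper instead gives a one-line proof, observing that the seventeen regions of (\ref{notacao}) cover the plane and simply reversing the forward inclusions already established in item $1$ of Proposition \ref{propinf}, Lemma \ref{lema3} and Lemma \ref{lema314} (for instance, $A$ occurs only in the list for $f(H_2)$, so $f^{-1}(A)\subset H_2$; $B$ occurs only in the lists for $f(A)$, $f(G)$, $f(H_1)$, $f(H_2)$, so $f^{-1}(B)\subset A\cup G\cup H_1\cup H_2$; and so on for all seventeen items). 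The paper's argument is essentially free given the earlier lemmas, whereas yours costs a substantial amount of interval bookkeeping; on the other hand, yours is self-contained and is, strictly speaking, tighter at the seams: the regions are closed and overlap along boundary segments, so the "reverse the arrows" argument tacitly requires that whenever some other piece $X$ has $f(X)$ meeting, say, $A$ only along a shared boundary, that intersection already lies in $H_2$ (e.g.\ $f(L)\subset L$ and $L\cap A=\{a_2\}\times[a_2,+\infty[$ is nonempty, but the only point of $L$ sent there is $\theta\in H_2$). Your direct computation, including the correct observation that $\{0\}\times\mathbb{R}$ lies precisely in $f^{-1}(R_1)$ and $f^{-1}(R_2)$ and is absorbed by the stated right-hand sides, sidesteps these boundary subtleties entirely. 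Either argument establishes the lemma.
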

\begin{proof} It follows from item $1$ of Proposition \ref{propinf} and
	Lemmas \ref{lema3} and \ref{lema314}.
\end{proof}

\subsection{Proof of item 1 of Proposition \ref{propkmenos}} \label{umpropkmenos}
From Lemma \ref{prop317}, we have $f^n(Z')\subset f^{n+1}(Z')$ for all $n\geq 0$, i.e. if $f^{-n}(x)\in Z'$, then $f^{-n-i}(x)\in Z'$, for all $i\geq 0$. 
Thus, we can see the dynamic of $f^{-1}$ on $Z'$ in Figure \ref{cicloinversa}.

\begin{figure}[!h]
	\centering
	\includegraphics[scale=0.5]{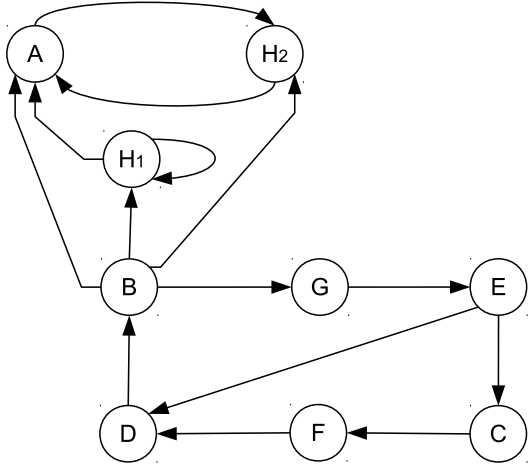}
	\caption{Cycle of filtration $Z'$ by $f^{-1}$.}
	\label{cicloinversa}
\end{figure}

For the proof of item $1$, we will consider three cases:

\noindent \textbf{Case 1:} Let $z\in Z$ such that $f^{-n}(z)\in Z\setminus
(A\cup H_1\cup H_2)$, for all $n\geq 0$. Thus, $z\in
\mathbb{R}^2\setminus \mathcal{K}^-$.

In fact, in this case there exists $i\in\mathbb{N}$, such that $f^{-i}(z)\in
E$.

Without loss of generality, supposes that $z\in E$.

If $f^{-1}(z)\in D$, then $f^{-2}(z)\in B$ and $f^{-3}(z)\in G$. Let
$w=f^{-3}(z)\in G$. Hence, by construction we have $w\in G$,
$f(w)\in B$, $f^2(w)\in D$ and $f^3(w)\in E$, i.e.

\begin{center}
	$w \in [1+c,+\infty[\times[-1,0]=G \textrm{, }  f(w)\in
	(f(G)\cap B)\subset [-1,c]\times[1+c,+\infty[$  and
	$f^2(w)\in (f(f(G)\cap B))\cap D)\subset ]-\infty,-1[\times[-1,c]$,
\end{center}

\noindent implying that $f^3(w)\in ]0,+\infty[\times]-\infty,-1[$, i.e. $f^3(w)\notin E$ which is a contradiction.

Therefore, in this case it suffices to study the cycle $FDBGECF$, i.e. $f^{-6n}(z)\in F$,
$f^{-6n-1}(z)\in D$, $f^{-6n-2}(z)\in B$,  $f^{-6n-3}(z)\in G$,
$f^{-6n-4}(z)\in E$ and $f^{-6n-5}(z)\in C$, for all $n\geq 0$. Hence, from the proof of Claim of Proposition \ref{proposition4}, we have
$f^{-6n}(z)=(x_{-6n},y_{-6n})\in F$ for all $n\geq 0$, where
$(x_{-6n})_{n\geq 0}$ and $(y_{-6n})_{n\geq 0}$ are decreasing sequences.

Let $l=\displaystyle\lim_{n\to +\infty}x_{-6n}$ and
$l'=\displaystyle\lim_{n\to +\infty}y_{-6n}$. Thus,
$f^{-6}(l,l')=(l,l')$, i.e. $(l,l')=(1+c,-1)$, which is an absurd, since $(x_{-6n})_{n\geq 0}$ and $(y_{-6n})_{n\geq 0}$ are
decreasing sequences with $0<x_{-6n}<1+c$ and $y_{-6n}<-1$, for all $n\geq 1$. Therefore,
$$
\begin{array}{lcl}
\displaystyle\lim_{n\to +\infty}f^{-6n}(z)=(0,-\infty), & \hspace{.5cm} & \displaystyle\lim_{n\to +\infty}f^{-6n-3}(z)=(+\infty,0), \\
\displaystyle\lim_{n\to +\infty}f^{-6n-1}(z)=(-\infty,0), & \hspace{.5cm} & \displaystyle\lim_{n\to +\infty}f^{-6n-4}(z)=(0,-\infty), \\
\displaystyle\lim_{n\to +\infty}f^{-6n-2}(z)=(0,+\infty), & \hspace{.5cm} & \displaystyle\lim_{n\to +\infty}f^{-6n-5}(z)=(-\infty,0). \\
\end{array}
$$

\noindent \textbf{Case 2:} Let $z\in H_1$ such that $f^{-n}(z)\in H_1$, for all $n\geq 0$.  Thus $z\in W^u(\theta)$.

In fact, observe that $H_1=([1+c,a_2]\times [0,1+c[)\cup ([1+c,a_2]\times
[1+c,a_2])$ and
\begin{center}
	$(f^{-1}([1+c,a_2]\times [0,1+c[)\cap H_1)\subset (([0,1+c[\times
	[\frac{1}{1+c},+\infty[)\cap H_1)=\emptyset$.
\end{center}
Hence, $\{z\in H_1: f^{-1}(z)\in H_1\}\subset ([1+c,a_2]\times
[1+c,a_2])$.

On the other hand, we have

\noindent $(f^{-1}([1+c,a_2]\times [1+c,a_2])\cap H_1)\subset ([1+c,a_2]\times
	[\frac{1}{a_2},\frac{a^2_2}{1+c}])\cap H_1\subset$

\hfill $[1+c,a_2]\times[\frac{1}{a_2},a_2]$,

\noindent $(f^{-1}([1+c,a_2]\times [\frac{1}{a_2},a_2])\cap H_1)\subset
	([\frac{1}{a_2},a_2]\times [\frac{1}{a_2},a^3_2])\cap H_1\subset
	[\frac{1}{a_2},a_2]\times[\frac{1}{a_2},a_2]$,

\noindent $(f^{-1}([\frac{1}{a_2},a_2]\times[\frac{1}{a_2},a_2])\cap
	H_1)\subset ([\frac{1}{a_2},a_2]\times[\frac{1-ca_2}{a_2^2},a_2^3
	])\cap H_1\subset$

\hfill $([\frac{1}{a_2},a_2]\times[\frac{1-ca_2}{a_2^2},a_2])$.

\noindent $(f^{-1}([\frac{1}{a_2},a_2]\times[\frac{1-ca_2}{a_2^2},a_2])\cap
	H_1)\subset ([\frac{1-ca_2}{a_2^2},a_2]
	\times[\frac{1-ca_2}{a_2^2},\frac{a_2^4}{1-ca_2} ])\cap H_1\subset$

\hfill $([\frac{1-ca_2}{a_2^2},a_2]\times[\frac{1-ca_2}{a_2^2},a_2])$

\noindent and continuing in this way, since $f^{-2n}(z)=(x_{-2n},y_{-2n})$, we have that
\begin{center}
	$\dfrac{1}{a_2^{n}}\displaystyle \left(1-c\sum_{i=1}^{n-1} a_2^i\right)\leq
	x_{-2n},y_{-2n}\leq a_2$, for all $n\geq 2$.
\end{center}

On the other hand,

\begin{center}
	$\dfrac{1}{a_2^{n}}\displaystyle \left(1-c\sum_{i=1}^{n-1}
	a_2^i\right)=\dfrac{1}{a_2^{n}}-\dfrac{c}{a_2^{n}}\cdot
	\dfrac{a_2^{n}-a_2}{a_2-1} \xrightarrow{n\to+\infty}\dfrac{-c}{a_2-1}=a_2$.
\end{center}

Hence,
\begin{center}
	$\displaystyle \lim_{n\to +\infty}f^{-2n}(z)=\lim_{n\to
		+\infty}(x_{-2n},y_{-2n})=(a_2,a_2)=\theta$ and
	$\displaystyle \lim_{n\to +\infty}f^{-2n+1}(z)=\displaystyle \lim_{n\to +\infty}f(x_{-2n},y_{-2n})=(a_2,a_2)=\theta$,
\end{center}
i.e.
\begin{center}
	$\displaystyle \lim_{n\to +\infty}f^{-n}(z)=\lim_{n\to
		+\infty}(x_{-n},y_{-n})=\theta$.
\end{center}
\noindent \textbf{Case 3:} If $z\in A\cup H_2$, then $z\in
\mathbb{R}^2\setminus \mathcal{K}^-$.

Without loss of generality, let $z\in A$. Hence, from Lemma
\ref{prop317}, we have $f^{-2n}(z)=(x_{-2n},y_{-2n})\in A$ and
$f^{-2n-1}(z)=(x_{-2n-1},y_{-2n-1})\in H_2$, for all $n\geq 0$.

From the proof of Lemma \ref{propa2}, it follows that
$(x_{-2n})_{n\geq 0}$ is a strictly decreasing sequence and
$(y_{-2n})_{n\geq 0}$ is a strictly increasing sequence.

Supposes that
\begin{center}
$\displaystyle l=\lim_{n\to +\infty}
x_{-2n}=\lim_{n\to +\infty} y_{-2n+1}$ and $\displaystyle
l'=\lim_{n\to +\infty} y_{-2n}=\lim_{n\to+\infty} x_{-2n-1}$.
\end{center}

Since $(x_{-2n},y_{-2n})=((x_{-2n-2}y_{-2n-2}+c)x_{-2n-2}+c,
x_{-2n-2}y_{-2n-2}+c)$, it follows that $(l,l')=((ll'+c)l+c,ll'+c)$, i.e. $l'=l\in\{a_1,a_2\}$.
Hence, we have that $\displaystyle \lim_{n\to +\infty}
x_{-2n}=\lim_{n\to +\infty}
y_{-2n}\in\{a_1,a_2\}$, which is a contradiction since $(x_{-2n})_{n\geq 0}$ is decreasing, $(y_{-2n})_{n\geq 0}$ is increasing and

\begin{center}
	$a_1<0\leq x_{-2n}<a_2$ e $a_1<a_2< y_{-2n}$, for all $n\geq 0$.
\end{center}

Therefore,

\begin{center}
	$\displaystyle \lim_{n\to+\infty}f^{-2n}(z)=(0,+\infty)$ and
	$\displaystyle \lim_{n\to+\infty}f^{-2n-1}(z)=(+\infty,0)$,
\end{center}

\noindent i.e. $z\in\mathbb{R}^2\setminus \mathcal{K}^-$, which ends the proof of item $1$ of Proposition \ref{propkmenos}. \hfill $\Box$

\subsection{Proof of item 2 of Proposition \ref{propkmenos}} \label{doispropkmenos}
From Lemma \ref{prop317}, we have $f^{n+1}(\mathcal{R}^-)\subset f^{n}(\mathcal{R}^-)$ for all $n\geq 0$, i.e. if $f^{-n-1}(x)\in\mathcal{R}^-$, then $f^{-n}(x)\in\mathcal{R}^-$. Thus, we can see the dynamic of $f^{-1}$ on $\mathcal{R}^-$ in Figure \ref{figrmenos}.
\begin{figure}[!h]
	\centering
	\includegraphics[scale=0.55]{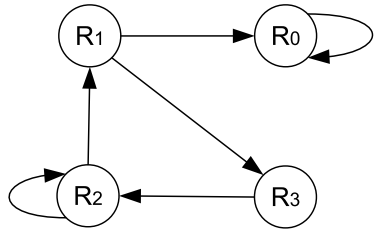}
	\caption{Cycle of filtration $\mathcal{R}^-$ by $f^{-1}$.}
	\label{figrmenos}
\end{figure}

For to prove the item $2$, we need the following lemmas.

\begin{lemma} \label{lemarzeromenos}
	If $-1<c<0$, then there exists $k\in\mathbb{N}$ such that
	$f^{-n}(R_0)\cap R_0=\emptyset$, for all $n\geq k$.
\end{lemma}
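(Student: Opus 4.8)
The plan is to recast the assertion as a statement about forward iterates and then import the Fibonacci-type estimate obtained inside the proof of Lemma~\ref{lema33}. Since $z\in f^{-n}(R_0)\cap R_0$ means precisely $z\in R_0$ and $f^n(z)\in R_0$, applying $f^n$ shows that $f^{-n}(R_0)\cap R_0=\emptyset$ if and only if $f^n(R_0)\cap R_0=\emptyset$. Hence it suffices to produce $k\in\mathbb{N}$ with $f^n(R_0)\cap R_0=\emptyset$ for every $n\ge k$. By Lemma~\ref{lema3} we have $f(R_0)\subset R_0\cup R_1$, $f(R_1)\subset R_2$, $f(R_2)\subset R_2\cup R_3$ and $f(R_3)\subset R_1$, so $f^n(R_0)\subset R_0\cup\mathcal R'$ for all $n$, and therefore
\[
f^n(R_0)\cap R_0=\bigl(f^n(R_0)\setminus\mathcal R'\bigr)\cup\bigl(f^n(R_0)\cap(\mathcal R'\cap R_0)\bigr).
\]

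For the first term I would quote the computation carried out in the proof of Lemma~\ref{lema33}: for $n\ge4$ one has $f^n(R_0)\setminus\mathcal R'\subset[0,(1+c)^{F_{n-1}}+c]\times[0,(1+c)^{F_{n-2}}]$, and since $0<1+c<1$ there is $N$ with $(1+c)^{F_{N-1}}+c<0$; then $f^n(R_0)\setminus\mathcal R'=\emptyset$ for all $n\ge N$. For the second term, inspecting the defining rectangles gives $\mathcal R'\cap R_0=\bigl([0,1+c]\times\{0\}\bigr)\cup\bigl(\{0\}\times[0,1+c]\bigr)$, that is, only the bottom and left edges of $R_0$ (here one uses $1+c<a_2$). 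The bottom edge causes no trouble for $n\ge1$, since the only point of $f(\mathbb{R}^2)$ with second coordinate $0$ is $f(0,y)=(c,0)$, and $(c,0)\notin R_0$ because $c<0$. For the left edge, $(0,0)$ has no $f$-preimage, while for $t\in(0,1+c]$ the unique $f$-preimage of $(0,t)$ is $(t,-c/t)$, whose second coordinate is at least $-c/(1+c)>0$ and which lies in none of $R_1,R_2,R_3$; hence if $(0,t)\in f^n(R_0)$ with $n\ge5$, then $(t,-c/t)\in f^{n-1}(R_0)\setminus\mathcal R'\subset[0,(1+c)^{F_{n-2}}+c]\times[0,(1+c)^{F_{n-3}}]$, which forces $-c/(1+c)\le(1+c)^{F_{n-3}}$, impossible once $n$ is large because $F_{n-3}\to\infty$.

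Putting these together, I would enlarge $N$ so that both $(1+c)^{F_{N-1}}+c<0$ and $(1+c)^{F_{N-3}}<-c/(1+c)$ hold and set $k=\max\{N,5\}$; then for all $n\ge k$ each of the two pieces in the displayed decomposition is empty, so $f^n(R_0)\cap R_0=\emptyset$, and by the first paragraph $f^{-n}(R_0)\cap R_0=\emptyset$.

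The only genuine work here is the boundary bookkeeping for the two edges of $R_0$ shared with $\mathcal R'$; I expect that to be the main (if mild) obstacle, because the closed rectangles of the filtration really do overlap along those edges, so the clean inclusion $f^n(R_0)\subset\mathcal R'$ coming from Lemma~\ref{lema33} alone does not immediately give the disjointness. Everything else is a reformulation plus an estimate already established.
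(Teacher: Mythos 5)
Your proof is correct, but it takes a genuinely different route from the paper's. You convert the statement into the forward one, $f^n(R_0)\cap R_0=\emptyset$, and recycle the Fibonacci estimate $f^n(R_0)\setminus\mathcal{R}'\subset[0,(1+c)^{F_{n-1}}+c]\times[0,(1+c)^{F_{n-2}}]$ from the proof of Lemma~\ref{lema33}; the only new work is then the boundary bookkeeping along the two edges $[0,1+c]\times\{0\}$ and $\{0\}\times[0,1+c]$ where the closed rectangles $R_0$ and $\mathcal{R}'$ overlap, and your treatment of those edges is sound (the bottom edge meets $f(\mathbb{R}^2)$ only at $(c,0)\notin R_0$, and the unique preimage $(t,-c/t)$ of a left-edge point lies outside $\mathcal{R}'$, hence in $f^{n-1}(R_0)\setminus\mathcal{R}'$, which is already empty once $n-1\geq N$ --- so your extra condition $(1+c)^{F_{N-3}}<-c/(1+c)$ is in fact superfluous). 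The paper instead iterates $f^{-1}$ directly: it shows $f^{-2n}(R_0)\cap R_0\subset\bigl[\frac{-c}{a_2^n}\sum_{i=0}^{n-1}a_2^i,\,1+c\bigr]\times\bigl[\frac{-c}{a_2^n}\sum_{i=0}^{n-1}a_2^i,\,a_2\bigr]$ and notes that the lower bound increases to $\frac{-c}{a_2-1}=a_2>1+c$, so the first factor is eventually empty. The paper's computation is self-contained, exhibits the dynamical mechanism (backward orbits trapped in $R_0$ are pushed toward $\theta$, which lies outside $R_0$), and matches the other backward estimates of Section~\ref{secaokmenos}; your argument is shorter given Lemma~\ref{lema33} and has the incidental merit of making explicit the edge overlap that the identity $f^n(R_0)\setminus\mathcal{R}'=f^n(R_0)\cap R_0$ asserted in the proof of Lemma~\ref{lema33} quietly ignores.
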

\begin{proof} Since $R_0=[0,1+c]\times [0,a_2]$, it follows that

\noindent $(f^{-1}(R_0)\cap R_0)\subset ([0,a_2]\times [\frac{-c}{a_2},+\infty[)\cap R_0\subset [0,1+c]\times[\frac{-c}{a_2},a_2]$,

\noindent $(f^{-1}([0,1+c]\times [\frac{-c}{a_2},a_2])\cap R_0)\subset ([\frac{-c}{a_2},a_2]\times [\frac{-c}{a_2},\frac{a_2}{-c}])\cap R_0\subset$
		
\hfill $[\frac{-c}{a_2},1+c]\times[\frac{-c}{a_2},a_2]$,
		
\noindent $(f^{-1}([\frac{-c}{a_2},1+c]\times[\frac{-c}{a_2},a_2])\cap R_0)\subset ([\frac{-c}{a_2},a_2]\times[\frac{-c-ca_2}{a_2^2},\frac{a_2}{-c} ]\cap R_0)$

\hfill $\subset [\frac{-c}{a_2},1+c]\times[\frac{-c-ca_2}{a_2^2},a_2]$,

\noindent $(f^{-1}([\frac{-c}{a_2},1+c]\times[\frac{-c-ca_2}{a_2^2},a_2])\cap R_0)\subset ([\frac{-c-ca_2}{a_2^2},a_2] \times[\frac{-c-ca_2}{a_2^2},\frac{a_2^2}{-c-ca_2} ]\cap R_0) \subset$

\hfill $[\frac{-c-ca_2}{a_2^2},1+c]\times[\frac{c-ca_2}{a_2^2},a_2]$

\noindent and continuing in this way, we have
	\begin{center}
		$f^{-2n}(R_0)\cap R_0\subset \left[\displaystyle
		\dfrac{-c}{a_2^n}\sum_{i=0}^{n-1} a_2^i,1+c\right]\times
		\left[\dfrac{-c}{a_2^n}\displaystyle \sum_{i=0}^{n-1}
		a_2^i,a_2\right]$, for all $n\geq 1$.
	\end{center}
	
	On the other hand,
	
	\begin{center}
		$\dfrac{-c}{a_2^n}\displaystyle \sum_{i=0}^{n-1}
		a_2^i=\dfrac{-c}{a_2^n}\cdot \dfrac{a_2^n-1}{a_2-1}\xrightarrow{n\to+\infty}\dfrac{-c}{a_2-1}=a_2$.
	\end{center}
	
	Since $1+c<a_2$, it follows that there exists $k\in\mathbb{N}$ such that
	
	\begin{center}
		$\dfrac{-c}{a_2^n}\displaystyle \sum_{i=0}^{n-1} a_2^i>1+c$, for all $n\geq k$,
	\end{center}
	
	\noindent i.e. $f^{-j}(R_0)\cap R_0=\emptyset$, for all $j\geq 2k$.
\end{proof}

\begin{lemma}
Let $-1< c<0$. If $z\in Y\setminus \{\alpha\}$, then there exists $N\in\mathbb{N}$ such that $f^{-N}(z)\not\in Y$. \label{propmula}
\end{lemma}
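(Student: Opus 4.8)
The plan is to argue by contradiction and to reduce the assertion to the fact that the nested compact sets $f^{n}(Y)$ shrink to the single point $\alpha$. Suppose $z\in Y\setminus\{\alpha\}$ satisfies $f^{-n}(z)\in Y$ for every $n\geq 0$. By the very meaning of $f^{-n}(z)$ we have $f^{n}\big(f^{-n}(z)\big)=z$, so $z\in f^{n}(Y)$ for all $n$; note this uses no injectivity of $f$ on $Y$. Since $f(Y)\subset Y$ (recall $f(Y)\subset[c,c^{2}+c]\times[c,0]\subset Y$), the sets $Y\supseteq f(Y)\supseteq f^{2}(Y)\supseteq\cdots$ form a decreasing sequence of nonempty compact sets and $z\in\bigcap_{n\geq 0}f^{n}(Y)$. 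Hence it is enough to prove that $\bigcap_{n\geq 0}f^{n}(Y)=\{\alpha\}$, which contradicts $z\neq\alpha$ and finishes the proof.

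Since $f(\alpha)=\alpha\in Y$, clearly $\alpha\in\bigcap_{n}f^{n}(Y)$. For the reverse inclusion it suffices to establish the \emph{uniform} attraction estimate $\ep_{n}:=\sup_{w\in Y}\|f^{n}(w)-\alpha\|\to 0$, for then $f^{n}(Y)\subseteq\overline{B}(\alpha,\ep_{n})$ and the intersection is contained in $\{\alpha\}$. The point is that Lemmas \ref{lemaazia} and \ref{lemaaziai} give not merely pointwise convergence $f^{n}(w)\to\alpha$ on $Y$ but, on inspection of their proofs, a rate independent of $w\in Y$. For $-\frac{3}{4}<c<0$ this is transparent: the proof of Lemma \ref{lemaazia} traps $f^{4n+2}(w)$ in the square $[c_{2n},c_{2n+1}]\times[c_{2n},c_{2n+1}]$ whose endpoints $c_{2n},c_{2n+1}\to a_{1}$ depend only on $c$. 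For $-1<c\leq-\frac{3}{4}$ one works with the sub-filtration $R,S,T,U$ of $Y$: by Lemma \ref{lema4} the gap between consecutive visits of a forward orbit of $Y$ to $R$ is at most $3$, so in the notation of Subsection \ref{subsecr0r1r2r3} the return times satisfy $n_{i}\leq 3(i+1)$; by the argument in the proof of Lemma \ref{lemaazia1}, $h_{1}(x)-x$ is continuous and strictly negative on the compact interval $[|a_{1}|^{2},|a_{1}|]$, hence bounded above by some $-\eta<0$, so every orbit that enters $Z_{1}$ (resp.\ $Z_{4}$) leaves it after a uniformly bounded number of returns, and therefore there is a uniform $j_{0}$ with $f^{n_{j_{0}}}(w)\in Z_{0}$ for all $w\in R$; finally Lemma \ref{lemaazia0} gives $f^{n_{j_{0}+i}}(w)\in R\big(|c|^{i}|a_{1}|^{3},|c|^{i}|a_{1}|^{3}\big)$, the intermediate iterates staying within the same order $|c|^{i}|a_{1}|^{3}$ of $\alpha$. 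Putting these together yields $\ep_{n}\to 0$, and hence $\bigcap_{n}f^{n}(Y)=\{\alpha\}$.

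The main obstacle is precisely this passage from pointwise to uniform attraction of $\alpha$ on $Y$: a purely topological (compactness) argument is not available, because $f|_{Y}$ is not injective — the whole segment $\{0\}\times[c,0]$ is collapsed onto $(c,0)$ — so one must return to the quantitative estimates of Subsection \ref{subsecr0r1r2r3}. An alternative route, closer in spirit to the proof of Lemma \ref{lemarzeromenos}, would work directly with $f^{-1}$: reversing Lemma \ref{lemaazia0} shows that near $\alpha$ the first backward return to $R$ multiplies the distance to $\alpha$ by a factor $\geq 1/|c|>1$, so a point whose entire backward orbit stayed in $Y$ would be forced to converge to $\alpha$ under $f^{-1}$, which is impossible for $z\neq\alpha$ since $\alpha$ is an attracting fixed point of $f$ and thus has only the constant backward orbit. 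As this amounts to the same uniform statement, I would present the first route.
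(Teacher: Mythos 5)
Your argument is correct, but it takes a genuinely different route from the paper. The paper works directly with the backward orbit and derives a contradiction from the reversed dynamics: for $-\frac{3}{4}<c<0$ it runs the nested squares $[c_{2k},c_{2k+1}]\times[c_{2k},c_{2k+1}]$ of Lemma \ref{lemaazia} backwards, so that after finitely many backward steps the point would have to lie in $Y\setminus[c,0]\times[c,0]=\emptyset$; for $-1<c\leq-\frac{3}{4}$ it shows the backward returns must eventually all lie in $Z_1$, producing an \emph{increasing} sequence $(\varepsilon_i)$ with $h_1(\varepsilon_i)=\varepsilon_{i-1}$ that would converge to a nonexistent fixed point of $h_1$ in $]a_1^2,|a_1|]$. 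You instead reduce the lemma to the cleaner statement $\bigcap_{n\geq 0}f^{n}(Y)=\{\alpha\}$ and prove it by upgrading the pointwise convergence of Lemmas \ref{lemaazia} and \ref{lemaaziai} to uniform convergence on $Y$; this is sound, since the trapping boxes in Lemma \ref{lemaazia} are indeed independent of the point, and the escape times from $Z_1,\ldots,Z_4$ are uniformly bounded because $h_1(x)-x$ is bounded away from $0$ on the relevant compact interval. Your version proves slightly more (uniform attraction on $Y$, hence that $\{\alpha\}$ is the full attractor of the trapping region $Y$) at the cost of reopening the proofs of Lemmas \ref{lemaazia0} and \ref{lemaazia1} to extract uniform constants, whereas the paper's Case 1 is essentially immediate. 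Two small inaccuracies in your write-up, neither fatal: the gap between consecutive returns to $R$ is at most $4$, not $3$ (the cycle $R\to S\to T\to U\to R$ from Lemma \ref{lema4}); and your claim that a purely topological argument is unavailable is overstated — since $\alpha$ is attracting one can take a trapping neighborhood $U$ of $\alpha$ with $\bigcap_k f^k(U)=\{\alpha\}$, cover the compact set $Y$ by the open sets $f^{-n}(U)$ using pointwise convergence, and conclude $f^{N}(Y)\subset U$ for some $N$ without any injectivity or quantitative estimates.
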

\begin{proof} Let $z\in Y\setminus\{\alpha\}$ and supposes that $f^{-n}(z)\in
	Y$, for all $n\in\mathbb{N}$.
	
	We need to consider two cases:
	
	\textbf{Case 1:} $-\frac{3}{4}<c<0$.
	
Since $z\notin\alpha$, from the proof of Lemma \ref{lemaazia}, we have that there exists $k\geq 1$ such that $z\in Y\setminus [c_{2k},c_{2k+1}]\times[c_{2k},c_{2k+1}]$, where $c_0=c$ and $c_n=c_{n-1}^2+c$, for all $n\geq 1$. Also, we have
	\begin{center}
		$f^{-4n}(z)\in Y\setminus [c_{2k-2n},c_{2k+1-2n}]\times[c_{2k-2n},c_{2k+1-2n}]$, for all $n\in\{0,\ldots,k\}$,
	\end{center}
	i.e. 
	\begin{center}
		$f^{-4k}(z)\in Y\setminus [c,c^2+c]\times [c,c^2+c]$, $f^{-4k-1}(z)\in Y\setminus [c,c^2+c]\times [c,0]$ and \\
		$f^{-4k-2}(z)\in Y\setminus [c,0]\times [c,0]=\emptyset$,
	\end{center} 
	which is an absurd.
	
	\textbf{Case 2:} $-1<c\leq -\frac{3}{4}$.
	
	Since $R=[a_1,0]\times[c,a_1]$, $S=[a_1,0]\times[a_1,0]$,
	$T=[c,a_1]\times[a_1,0]$ and $U=[c,a_1]\times[c,a_1]$, it follows from Lemma
	\ref{lema4} that
	$$
	\begin{array}{ll}
	f^{-1}(R)\cap Y\subset (T\cup U), & f^{-1}(T)\cap Y\subset (R\cup S),  \\
	f^{-1}(S)\cap Y\subset R,  &	f^{-1}(U)\cap Y\subset T.
	\end{array}
	$$
	
	Thus, the dynamics of $f^{-1}$ at $Y$ satisfies the following diagram:
	\begin{figure}[!h]
	\centering
	\includegraphics[scale=0.48]{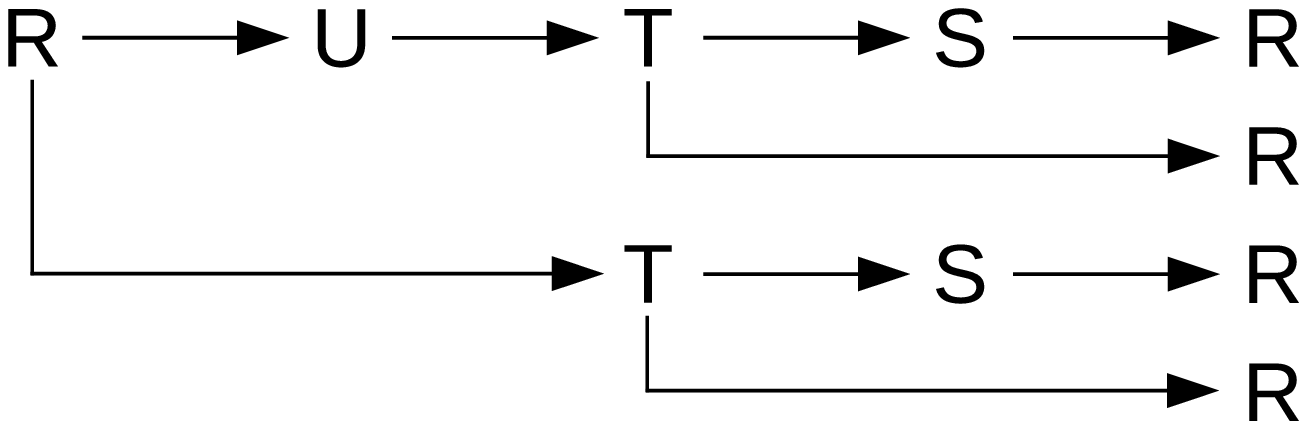}
\end{figure}
	
	Since we are supposing that $f^{-n}(z)\in Y\setminus\{\alpha\}$, for all $n\in\mathbb{N}$, we can supposes without loss of generality that $z\in R\setminus
	\{\alpha\}$. Hence, there exists $k\in\mathbb{N}$, such that $z\in
	R\setminus R(|c|^k|a_1|^3,|c|^k|a_1|^3)$.
	
	Let $n_0=0$ and defines $n_i=\min\{n\in\mathbb{N}:n>n_{i-1} \textrm{ and
	} f^{-n}(z)\in R\}$, for all $i\geq 1$.	Thus, from Lemma \ref{lemaazia0}, we have
	\begin{center}
		$f^{-n_{k}}(z)\in R\setminus R(|a_1|^3,|a_1|^3)\subset Z_1\cup Z_2\cup Z_3\cup Z_4$.
	\end{center}
	
Hence, from Lemmas \ref{lemaazia0} and \ref{lemaazia1}, it follows that there exists $j\geq k$ such that $f^{-n_i}(z)\in Z_1=]a_1+a_1^2,0]\times[c,a_1]$, for all $i\geq j$. Thus, in the same way of the proof of item $1$ of Lemma \ref{lemaazia1}, there exists a bounded increasing sequence $(\varepsilon_i)_{i\geq j}$ in $ ]a_1^2,|a_1|]$ converging to $\varepsilon\in ]a_1^2,|a_1|]$, where $h_1(\varepsilon_i)=\varepsilon_{i-1}$ with $h_1:\mathbb{R}\longrightarrow \mathbb{R}$ defined by $h_1(x)=(a_1^2-a_1^5)x^2+(2a_1^2-a_1^5-a_1^6)x+a_1-a_1^6$. We thus have $\varepsilon=h_1(\varepsilon)$ which is impossible since $h_1(x)-x<0$ for all $x\in ]a_1^2,|a_1|]$.
\end{proof}

\begin{lemma}
	If $-1<c<0$, then the following assertions holds:
	
	\noindent \textbf{1.} If $z\in R_2\setminus Y$, then $f^{-1}(z)\not\in
	R_2$ or $f^{-2}(z)\not\in R_2$.
	
	\noindent \textbf{2.} $f^{-1}(R_3)\cap  R_2\subset [-1,c]\times [-1,c]$.
	
	\noindent \textbf{3.} $f^{-1}([-1,c]\times[-1,c])\subset R_1$. \label{mula}
\end{lemma}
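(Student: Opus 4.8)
The plan is to reduce all three items to the explicit inverse formula for $f$. Since $f(x',y')=(x'y'+c,x')$, every point $(x,y)$ with $y\neq 0$ has a unique preimage, namely $f^{-1}(x,y)=\bigl(y,\tfrac{x-c}{y}\bigr)$; on the sets occurring in the statement the relevant second coordinate is always $\le c<0$ (the borderline $y=0$ will be a short degenerate case), so this formula applies. The one observation that drives everything is a sign fact: when $y<0$, the second coordinate $\tfrac{x-c}{y}$ of $f^{-1}(x,y)$ is $>0$ iff $x<c$, equals $0$ iff $x=c$, and is in $[-1,0]$ iff $c\le x\le 0$ and $x+y\le c$.

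For item 1, I would write $R_2\setminus Y=\{(x,y)\in[-1,0]^2:x<c\}\cup\{(x,y)\in[-1,0]^2:y<c\}$. If $x<c$, then (taking $y<0$; if $y=0$ then $f^{-1}(z)=\emptyset$, since a preimage would need first coordinate $c\neq x$) the sign fact gives that $f^{-1}(z)$ has strictly positive second coordinate, so $f^{-1}(z)\notin R_2$. In the remaining case $y<c$ with $c\le x\le 0$, the formula shows $f^{-1}(z)$ may well lie in $R_2$, but its first coordinate equals $y<c$; applying the previous case to $f^{-1}(z)$ then forces $f^{-2}(z)\notin R_2$. This settles item 1.

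For item 2, parametrize a point of $R_2$ as $w=(-s,-t)$ with $s,t\in[0,1]$. Then $f(w)=(st+c,-s)$, and $f(w)\in R_3=[0,1+c]\times[-1,0]$ amounts exactly to $st\ge -c=|c|$ (the upper bound $st\le 1$, hence $st+c\le 1+c$, is automatic, and $-s\in[-1,0]$). Since $t\le 1$ we get $s\ge st\ge|c|$, and since $s\le 1$ we get $t\ge st\ge|c|$; hence $x=-s\le c$ and $y=-t\le c$, i.e.\ $w\in[-1,c]\times[-1,c]$. For item 3, take $(x,y)\in[-1,c]\times[-1,c]$; then $y\le c<0$, so $f^{-1}(x,y)=\bigl(y,\tfrac{x-c}{y}\bigr)$, whose first coordinate $y$ lies in $[-1,0]$ and whose second coordinate is $\ge 0$ because $x-c\le 0$ and $y<0$. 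Thus $f^{-1}([-1,c]\times[-1,c])\subset[-1,0]\times[0,+\infty)$; any image point with second coordinate $>1+c$ has first coordinate $\le c<0$, so it lies in $B$ and not in $\mathcal R$. Since item 3 is invoked only for a point whose full backward orbit stays in $\mathcal R$ (a point of $\mathcal R^-$), such an image point cannot occur, and we conclude $f^{-1}([-1,c]\times[-1,c])\cap\mathcal R\subset R_1=[-1,0]\times[0,1+c]$, which is what is used.

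Items 1 and 2 are routine once the inverse formula and the sign fact are recorded; the only care needed there is the handful of degenerate preimages (the collapsing line $\{x'=0\}$) and the boundary overlaps between the $R_i$, which occur only on lower-dimensional sets and are handled by inspection. The main obstacle is item 3: the naive preimage $f^{-1}([-1,c]\times[-1,c])$ genuinely overshoots $R_1$ into $B$ — for instance $f^{-1}(-1,c)=\bigl(c,\tfrac{-1-c}{c}\bigr)$ has second coordinate $\tfrac{1+c}{|c|}>1+c$ — so the conclusion "$\subset R_1$" is legitimate only after one observes that the overflow falls into $B\setminus\mathcal R$ and is therefore excluded by the standing assumption that backward iterates remain in $\mathcal R$; I would make this proviso explicit (or restate item 3 as $f^{-1}([-1,c]\times[-1,c])\cap\mathcal R\subset R_1$), since that is the form in which it is applied in the proof of item 2 of Proposition \ref{propkmenos}.
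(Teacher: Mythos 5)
Your treatment of items 1 and 2 is correct and essentially the paper's own argument. For item 1 you use the same decomposition of $R_2\setminus Y$ into the strip $\{x<c\}$, whose preimage has strictly positive second coordinate, and the strip $\{y<c,\ c\le x\le 0\}$, whose preimage has first coordinate $<c$ and so reduces to the first strip; the degenerate line $\{y=0\}$ is handled as you indicate. For item 2 you replace the paper's two steps (first $f^{-1}(R_3)\cap R_2\subset[-1,0]\times[-1,c]$, then the exclusion of $[c,0]\times[-1,c]$ via $f([c,0]\times[-1,c])\subset Y$) by the single forward computation $st\ge|c|\Rightarrow s\ge|c|$ and $t\ge|c|$, which is equivalent but tidier.

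The substantive content of your proposal is item 3, and there you are right and the lemma as printed is not. The paper's one-line proof claims $f^{-1}([-1,c]\times[-1,c])\subset[-1,c]\times[0,1+c]$, but the second coordinate of $f^{-1}(x,y)=\bigl(y,\frac{x-c}{y}\bigr)$ on this square ranges over $\bigl[0,\frac{1+c}{|c|}\bigr]$, and $\frac{1+c}{|c|}>1+c$ for every $c$ with $-1<c<0$; your example $f^{-1}(-1,c)=\bigl(c,\frac{1+c}{|c|}\bigr)$ is a genuine counterexample, and the failure occurs on an open subset of the square, not merely on its boundary. (The paper's own Lemma \ref{prop317}, item 12, which allows $f^{-1}(R_2)$ to meet $B$, already signals this.) Your repair is the correct one: the overflow lies in $[-1,c]\times\,]1+c,+\infty[$, which is contained in $B$ and disjoint from $\mathcal{R}$, so that $f^{-1}([-1,c]\times[-1,c])\cap\mathcal{R}\subset R_1$; and this is exactly the form in which item 3 is invoked in Subsection \ref{doispropkmenos}, where the standing hypothesis $z\in\mathcal{R}^-$ keeps every backward iterate inside $\mathcal{R}$. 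So your proof establishes the lemma in the only form in which it is needed, and in doing so identifies and fixes an error in the stated item 3 and its proof.
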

\begin{proof} \textbf{1.} We have that $R_2\setminus Y=([c,0]\times[-1,c[)\cup
	([-1,c[\times[-1,0])$. On the other hand,
	
	\begin{center}
		$f^{-1}([c,0]\times[-1,c[)\subset ([-1,c[\times[-1,0])$ and
		$f^{-1}([-1,c[\times[-1,0])\subset ([-1,0]\times]0,+\infty])$.
	\end{center}
	
	\noindent \textbf{2.} Since $R_3=[0,1+c]\times [-1,0]$, it follows that
	\begin{center}
		$f^{-1}(R_3)\cap R_2\subset([-1,0]\times]-\infty,c])\cap R_2\subset
		[-1,0]\times[-1,c]$.
	\end{center}
However,  $f([c,0]\times[-1,c])\subset [c,0]\times[c,0]$, i.e. $[c,0]\times[-1,c]\not\subset f^{-1}(R_3)$.

Therefore, $f^{-1}(R_3)\cap  R_2\subset [-1,c]\times [-1,c]$.
\begin{center}

\end{center}
	
	\noindent \textbf{3.}
	$f^{-1}([-1,c]\times[-1,c])\subset[-1,c]\times[0,1+c]\subset R_1$
\end{proof}

Now, we proceed to the proof of item $2$ of Proposition \ref{propkmenos}.

Let $z=(x,y)\in \mathcal{R}^-$. From Lemma \ref{lemarzeromenos}, we have $R_0\not\subset \mathcal{R}^-$, i.e. $f^{-n}(z)\in R_1\cup R_2\cup R_3$, for all $n\geq 0$. Hence, from Figure \ref{figrmenos}, Lemma \ref{propmula} and items $1$ and $2$ of Lemma \ref{mula}, we can suppose that $z\in [-1,c]\times[-1,c]$, i.e. we have to study the case
	$f^{-3n}(z)\in [-1,c]\times[-1,c]$, $f^{-3n-1}(z)\in
	[-1,c]\times[0,1+c]$ and $f^{-3n-2}(z)\in R_3$, for all
	$n\in\mathbb{N}$.
	
	Observe that $f([-\sqrt{-c},c]\times[-\sqrt{-c},c])\subset
	[c^2+c,c]\times [-\sqrt{-c},c]\subset R_2$. However, we are considering the points $w\in R_2$ such that $f(w)\in R_3$. Hence, we need to consider $z\in([-1,c]\times[-1,c])\setminus ([-\sqrt{-c},c]\times[-\sqrt{-c},c])$.
	
Let $g:\mathbb{R}\longrightarrow\mathbb{R}$ be the function defined by $g(x)=x(x^2+c)+c$. Since the critical points of $g$ are $-\sqrt{\frac{-c}{3}}$ and $\sqrt{\frac{-c}{3}}$, it is not hard to prove that 
\begin{equation} \label{gcresbije}
g:[-1,-\sqrt{-c}]\longrightarrow [-1,c] \textrm{ is bijective  and increasing.}
\end{equation}

	Let $z_0=-\sqrt{-c}$. Since $z_0\in(-1,c)$, by relation (\ref{gcresbije}) there exists $-1<z_1< z_0$ such that
	$z_0=g(z_1)$. Continuing in this way, we construct a decreasing sequence $(z_n)_n$ such that $-1<z_n=g(z_{n+1})<z_0$, for all $n\geq 1$.

	Let $f^{-i}(x,y)=(x_{-i},y_{-i})$, for all $i\geq 0$. Thus, from the proof of the case 2.1 of
	Lemma \ref{prop66}, we have $(x_{-3n},y_{-3n})\in
	([-1,0]\times[-1,0])\setminus ([z_n,0]\times [z_n,0])$, for all $n\geq 0$.
	
	Since $(z_n)_{n\geq 0}$ is decreasing and $z_0=-\sqrt{-c}<c$, it follows that
	$-z_n+c>0$. Hence, for all $n\geq0$, we have:

\smallskip	

	\noindent $(i)$ If $f^{-3n-3}(z)\in [-1,z_{n+1}]\times[z_n,0]$, then

\smallskip

\noindent $f^{-3n-2}(z)\in [c,-z_n+c]\times[-1,z_{n+1}]\cap R_3= [0,-z_n+c]\times[-1,z_{n+1}]$,

\noindent $f^{-3n-1}(z)\in [z_n,c]\times[0,-z_n+c] \subset R_1$ and

\noindent $f^{-3n}(z)\in [z_n(-z_n+c)+c,c]\times[z_n,c]\subset R_2$.
	
	On the other hand, $-1<z_n<c$ implies $z_n(-z_n+c)+c\geq z_n$, for all $n\geq 0$. Thus, we have $[z_n(-z_n+c)+c,c]\times[z_n,c]\subset[z_n,0]\times[z_n,0]$, which is an absurd, because  $f^{-3n}(z)\in ([-1,0]\times[-1,0])\setminus ([z_n,0]\times [z_n,0])$, for all $n\geq 0$.
	
	\begin{figure}[!h]
		\centering
		\includegraphics[scale=0.6]{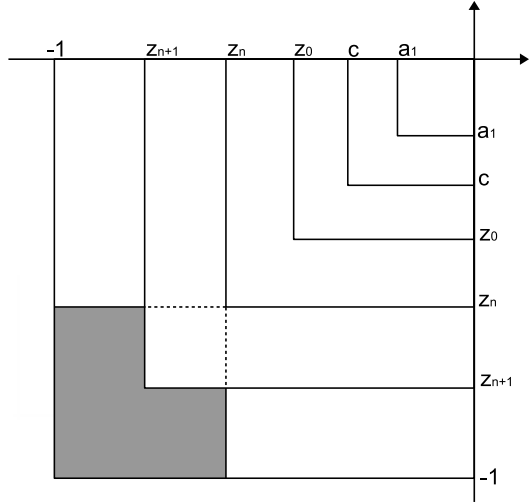}
		\caption{Region that contains $f^{-3n-3}(z)$.}
		\label{figrstuyk}
	\end{figure}
	
	\noindent $(ii)$ If $f^{-3n-3}(z)=(x_{3n+3},y_{3n+3})\in
	[z_n,0]\times[-1,z_{n+1}]$, then
	
\smallskip 
	
\noindent $f^{-3n-2}(z)\in [c,-z_n+c]\times[z_n,0]\cap R_3= [0,-z_n+c]\times[z_n,0]$,

\noindent $f^{-3n-1}(z)\in [z_n(-z_n+c)+c,c]\times[0,-z_n+c] \subset R_1$ and

\noindent $f^{-3n}(z)\in [(-z_n+c)(z_n(-z_n+c)+c)+c,c]\times[z_n(-z_n+c)+c,c]\subset R_2$.

\smallskip

	On the other hand, since $-z_n+c\geq 0$ and $z_n(-z_n+c)+c\geq z_n$, we have

	\begin{center}
		$[(-z_n+c)(z_n(-z_n+c)+c)+c,c]\times[z_n(-z_n+c)+c,c] \subset[z_n,0]\times[z_n,0]$,
	\end{center}
	
	\noindent which is an absurd, since $f^{-3n}(z)\in
	[-1,0]\times[-1,0]\setminus [z_n,0]\times [z_n,0]$, for all $n\geq 0$.
	Therefore, by the items $(i)$ and $(ii)$, we have
	$f^{-3n-3}(z)\in [-1,z_n]\times
	[-1,z_n]\setminus[z_{n+1},z_n]\times[z_{n+1},z_n]$ (the hatching region
	on Figure \ref{figrstuyk}), for all $n\geq 0$. Furthermore, like was done in the proof of case 2.1 of Lemma \ref{prop66}, we have that $(z_n)_n$ converges for $l\in\{-1,a_1,a_2\}$, and since $-1<z_n<c$ for all $n\geq 0$, we have $l=-1$.
	
	Hence, since $f^{-3n-3}(z)\in [-1,z_n]\times [-1,z_n]$ for all $n\geq 0$, it follows that

\noindent $\displaystyle\lim_{n\to+\infty} f^{-3n-3}(z)=p$, $\displaystyle\lim_{n\to+\infty} f^{-3n-2}(z)=f(p)$ and
$\displaystyle\lim_{n\to+\infty} f^{-3n-1}(z)=f^2(p)$, which ends the proof of item $2$ of Proposition \ref{propkmenos}. \hfill $\Box$

\subsection{Proof of item 3 of Proposition \ref{propkmenos}} \label{trespropkmenos}
Let $z=(x,y)\in L\cap \mathcal{K}^-$. From Lemma \ref{prop317} and
by case $3$ of the proof of item $1$ of Proposition \ref{propkmenos}, we have to
suppose that $f^{-n}(z)\in L$, for all $n\geq 0$.

Let $w=\|z\|\geq a_2$ and for each $n\geq 1$, let $b_n:=\dfrac{w}{a_2^n}-\dfrac{c}{a_2^n}\displaystyle\sum_{i=0}^{n-1}a_2^i$, $A_{2n}=[a_2,b_n]\times[a_2,b_{n}]$ and
$A_{2n+1}=[a_2,b_{n}]\times[a_2,b_{n+1}]$.

Observe the sequence of sets $(A_i)_{i\geq 1}$ is well defined, since
\begin{center}
$b_n\geq \dfrac{a_2}{a_2^n}-\dfrac{c}{a_2^n}\displaystyle\sum_{i=0}^{n-1}a_2^i=\dfrac{a_2}{a_2^n}-\dfrac{c}{a_2^n}\cdot \dfrac{a_2^n-1}{a_2-1} =\dfrac{a_2}{a_2^n}+a_2\cdot\dfrac{a_2^n-1}{a_2^n}=a_2$.
\end{center}

\noindent \textbf{Claim:} $f^{-j}(z)\in A_j$, for all $j\geq 2$.

Before to prove the claim, observe that
\begin{center}
$\dfrac{b_n-c}{a_2}=\dfrac{w}{a_2^{n+1}}-\dfrac{c}{a_2^{n+1}}\displaystyle\sum_{i=0}^{n-1}a_2^i-\dfrac{c}{a_2}=\dfrac{w}{a_2^{n+1}}-\dfrac{c}{a_2^{n+1}}\displaystyle\sum_{i=0}^{n}a_2^i=b_{n+1}$,
\end{center}
 for all $n\geq 1$. Thus, since $z\in[a_2,w]\times[a_2,w]$, it follows that

\smallskip

\noindent	$f^{-1}(z)\in\left([a_2,w]\times\left[\frac{a_2-c}{w},\frac{w-c}{a_2}\right]\right)\cap L\subset [a_2,w]\times\left[a_2,b_1\right]$,

\noindent	$f^{-2}(z)\in\left(\left[a_2,b_1\right]\times\left[ \frac{a_2-c}{b_1},\frac{w-c}{a_2}\right]\right)\cap
	L\subset \left[a_2,b_1\right]\times\left[ a_2,b_1\right]=A_2$ and

\noindent	$f^{-3}(z)\in\left(\left[a_2,b_1\right]\times\left[ \frac{a_2-c}{b_1},\frac{b_1-c}{a_2}\right]\right)\cap
	L\subset \left[a_2,b_1\right]\times\left[ a_2,b_2\right]=A_3$.

Let $k\geq 3$ and supposes that $f^{-j}(z)\in A_j$, for all
$j\in\{2,\ldots,k\}$.

\smallskip

$(a)$ If $k=2n$, for some $n\in\mathbb{N}$, then

\smallskip

\noindent $f^{-2n}(z)\in A_{2n}=[a_2,b_n]\times[ a_2,b_{n}]$ and

\noindent $f^{-2n-1}(z)\in\left(\left[a_2,b_{n}\right]\times\left[ \frac{a_2-c}{b_{n}},\frac{b_n-c}{a_2}\right]\right)\cap L
 \subset [a_2,b_{n}]\times [ a_2,b_{n+1}]=A_{2n+1}$.

\smallskip

$(b)$ If $k=2n+1$, for some $n\in\mathbb{N}$, then

\smallskip

\noindent $f^{-2n-1}(z)\in A_{2n+1}=[a_2,b_{n}]\times[ a_2,b_{n+1}]$ and 

\noindent $ f^{-2n-2}(z)\in\left(\left[a_2,b_{n+1}\right]\times\left[ \frac{a_2-c}{b_{n+1}},\frac{b_{n}-c}{a_2}\right]\right)\cap L
 \subset [a_2,b_{n+1}]\times [ a_2,b_{n+1}]=A_{2n+2}$.

Therefore, by induction we proved that $f^{-j}(z)\in A_j$, for all $j\geq 2$.

On the other hand, observe that
\begin{center}
	$b_n= \dfrac{w}{a_2^n}-\dfrac{c}{a_2^n}\displaystyle\sum_{i=0}^{n-1}a_2^i=\dfrac{w}{a_2^n}-\dfrac{c}{a_2^n}\cdot \dfrac{a_2^n-1}{a_2-1} =\dfrac{w}{a_2^n}+a_2\cdot\dfrac{a_2^n-1}{a_2^n}\xrightarrow{n\to+\infty} a_2$.
\end{center}
Hence, $\displaystyle\lim_{n\to+\infty}f^{-n}(z)=(a_2,a_2)=\theta$ and the proof of item $3$ of Proposition \ref{propkmenos} is done. \hfill $\Box$

\subsection{Proof of item 4 of Proposition \ref{propkmenos}} \label{quatropropkmenos}
 Let $z=(x,y)\in (M\cup N\cup P)\cap \mathcal{K}^-$. Thus, from Lemma \ref{prop317} and 
item $1$ of Proposition \ref{propkmenos}, it suffices to consider the case where $f^{-n}(z)=(x_{-n},y_{-n})\in(M\cup N\cup P)$, for all $n\geq 0$.
	
	Without loss of generality, supposes that $z\in N$. Hence,
	\begin{center}
$(x_{-3n},y_{-3n})\in N$, $(x_{-3n-1},y_{-3n-1})\in M$ and $(x_{-3n-2},y_{-3n-2})\in P$,
	\end{center}
	\noindent for all $n\geq 0$.
	
	\noindent \textbf{Claim:} $x_{-3n}\leq x_{-3n-3}$ and $y_{-3n}\leq y_{-3n-3}$, for all $n\geq 0$.
	
	In fact, let $n\geq 0$. Thus,
\begin{center}
$max\{x_{-3n-3},y_{-3n-3}\}\leq-1$ and $x_{-3n-3}y_{-3n-3}+c=x_{-3n-2}\geq 1+c>0$, 
\end{center}	
which implies
\begin{center}
$y_{-3n}=x_{-3n-3}(x_{-3n-3}y_{-3n-3}+c)+c\leq -(x_{-3n-3}y_{-3n-3}+c)+c\leq y_{-3n-3}$.
\end{center}

On the other hand,
\begin{center}
$x_{-3n}=y_{-3n}(x_{-3n-3}y_{-3n-3}+c)+c \leq y_{-3n-3}(x_{-3n-3}y_{-3n-3}+c)+c\leq -(x_{-3n-3}y_{-3n-3}+c)+c\leq x_{-3n-3}$,
\end{center}
 which concludes the proof of claim.
	
	Hence, since the sequences $(x_{-3n})_{n\geq 0}$ and $(y_{-3n})_{n\geq 0}$ are not decreasing and $max\{x_{-3n},y_{-3n}\}\leq-1$, for all $n\geq 0$, it follows that  $(x_{-3n})_{n\geq 0}$ and $(y_{-3n})_{n\geq 0}$ are convergent. Let
	
	\begin{center}
		$l:=\displaystyle\lim_{n\to+\infty}x_{-3n}\leq-1$ \hspace{.5cm} and \hspace{.5cm} $l':=\displaystyle\lim_{n\to+\infty}y_{-3n}\leq -1$.
	\end{center}
Since
\begin{center}
$y_{-3n+3}=(x_{-3n}y_{-3n}+c)x_{-3n}+c$ and $x_{-3n+3}=y_{-3n+3}(x_{-3n}y_{-3n}+c)+c$,
\end{center}	
for all $n\geq 0$, we have
	
	\begin{center}
		$l'=(l'l+c)l+c$ and $l=l'(ll'+c)+c$, which implies $(l-l')(l'l+c+1)=0$.
	\end{center}
	
	Since $l'l+c+1>0$, it follows that $l=l'$. Thus, $l^3+(c-1)l+c=0$, i.e. $l\in\{-1,a_1,a_2\}$ and so $l=l'=-1$.
	
	Therefore, we have that
	\begin{center}
		$\displaystyle\lim_{n\to+\infty} f^{-3n}(z)=(-1,-1)=p$,
		$\displaystyle\lim_{n\to+\infty} f^{-3n-1}(z)=(-1,1+c)=f^2(p)$ \newline and
		$\displaystyle\lim_{n\to+\infty} f^{-3n-2}(z)=(1+c,-1)=f(p)$,
	\end{center}
which ends the proof of item $4$ of Proposition \ref{propkmenos}. \hfill $\Box$

\section*{Acknowledgments} I heartily thank \emph{Ali Messaoudi} for many helpful comments, discussions and suggestions. I also want to thank the \emph{Institut de Math\'ematiques de Luminy}, where I have done part of this work. I would also like to thank the anonymous referees for an especially careful reading of this paper and for the useful comments and remarks.


\begin{thebibliography}{99}

\bibitem{bs}
\newblock E. Bedford and J. Smillie,
\newblock Polynomial diffeomorphisms of $\mathbb{C}^2$,
\newblock \emph{VI. Connectivity of J. Ann. of Math.}, \textbf{148(2)} (1998), 695--735.

\bibitem{bcm}
\newblock S. Bonnot, A. De Carvalho and A. Messaoudi,
\newblock Julia sets for Fibonacci endomorphisms of $\mathbb{R}^2$,
\newblock \emph{Dynamical Systems}, \textbf{33(4)} (2018), 622--645. 

\bibitem{cmv}
\newblock D. A. Caprio, A. Messaoudi and G. Valle,
\newblock Stochastic adding machines based on Bratteli diagrams,
\newblock \emph{arXiv:1712.09476 [math.DS]}.

\bibitem{dv}
\newblock R. Devaney,
\newblock An Introduction to Chaotic Dynamical Systems,
\newblock 2nd ed. (1989), Addison-Wesley.

\bibitem{abms}
\newblock H. El Abdalaoui, S. Bonnot, A. Messaoudi, and O. Sester,
\newblock On the Fibonacci complex dynamical systems,
\newblock \emph{Discrete and Continuous Dynamical Systems}, \textbf{36(5)} (2016), 2449–2471.

\bibitem{am}
\newblock H. El Abdalaoui and A. Messaoudi,
\newblock On the spectrum of stochastic perturbations of the shift and Julia sets,
\newblock \emph{Fundamenta Mathematicae}, \textbf{218(2)} (2012), 47--68.

\bibitem{fatou19}
\newblock P. Fatou,
\newblock Sur les \'equations fonctionnelles, 
\newblock \emph{Bull. Soc. Mat. France}, \textbf{47} (1919) 161--271; \textbf{48} (1920), 33--94, 208--314.

\bibitem{fatou21}
\newblock  P. Fatou,
\newblock  Sur les fonctions qui admettent plusieurs th\'eor\`emes de multiplication,
\newblock  \emph{C.R.A.S.}, \textbf{173} (1921), 571--573.

\bibitem{fs}
\newblock J. E. Fornaess and N. Sibony,
\newblock Fatou and Julia sets for entire mappings in $\mathbb{C}^k$,
\newblock \emph{Math. Ann.}, \textbf{311} (1998), 27-40.

\bibitem{g}
\newblock V. Guedj,
\newblock Dynamics of quadratic polynomial mappings of $\mathbb{C}^2$,
\newblock \emph{Michigan Math. J.}, \textbf{52(3)} (2004), 627--648.

\bibitem{hw}
\newblock S. Hayes and C. Wolf,
\newblock Dynamics of a one-parameter family of H\'enon maps,
\newblock \emph{Dynamical Systems} \textbf{21(4)} (2006), 399--407.

\bibitem{julia18}
\newblock G. Julia,
\newblock  M\'emoire sur l'it\'eration des fonctions rationnelles,
\newblock  \emph{J. Math. Pure Appl.}, \textbf{8} (1918), 47--245.

\bibitem{julia22}
\newblock G. Julia,
\newblock  M\'emoire sur la permutabilit\'e des fractions rationnelles,
\newblock  \emph{Annales scientifiques de l'\'Ecoles Normale Sup\'erieure}, \textbf{39(11)} (1922), 131--215.

\bibitem{killentaylor}
\newblock P. R. Killeen and T. J. Taylor,
\newblock  A stochastic adding machine and complex dynamics,
\newblock  \emph{Nonlinearity}, \textbf{13(6)} (2000), 1889–1903.

\bibitem{ms}
\newblock A. Messaoudi and D. Smania,
\newblock Eigenvalues of stochastic adding machine,
\newblock \emph{Stochastics and Dynamics}, \textbf{10(2)} (2010), 291--313.

\end{thebibliography}
\end{document}